\documentclass{amsart}
\usepackage[utf8]{inputenc}
\usepackage{amsmath}
\usepackage{amsthm}
\usepackage{amsfonts, dsfont}
\usepackage{amssymb}
\usepackage[all]{xy}
\usepackage{indentfirst}
\usepackage[pagebackref=true]{hyperref}

\usepackage{cleveref}
\usepackage[alphabetic, initials]{amsrefs}

\usepackage{xfrac}

\newtheorem{thm}{Theorem}[section]

\newtheorem{theorem}[thm]{Theorem}
\newtheorem{corollary}[thm]{Corollary}
\newtheorem{proposition}[thm]{Proposition}

\newtheorem{lemma}[thm]{Lemma}
\newtheorem*{theorem*}{Theorem}
\newtheorem*{conjecture*}{Conjecture}
\newtheorem*{corollary*}{Corollary}

\newtheorem{thmintro}{Theorem}

\theoremstyle{definition}

\newtheorem*{defn*}{Definiton}
\newtheorem{example}[thm]{Example}
\newtheorem{definition}[thm]{Definition}
\newtheorem*{ack}{Acknowledgements}
\newtheorem{remark}[thm]{Remark}

\newcommand{\N}{\mathbb{N}} %% Naturals
\newcommand{\Z}{\mathbb{Z}} %% Integers
\newcommand{\C}{\mathbb{C}} %% Complex
\newcommand{\R}{\mathbb{R}}

\usepackage{verbatim} %% Enables 'comment'
\usepackage{color}
\newcommand{\G}{\Gamma}

\newcommand{\cs}{\mathrm{C}^*}

\DeclareMathOperator{\interior}{int}

\newcommand{\la}{\lambda}

\newcommand{\ess}{\mathrm{ess}}

\newcommand{\act}{\!\curvearrowright\!}

\newcommand{\Fix}{\operatorname{Fix}}

\newcommand{\supp}{\operatorname{supp}}

\newcommand{\dom}{\operatorname{dom}}
\newcommand{\Ped}{\mathrm{Ped}}

\newcommand{\cL}{\mathcal{L}}

\newcommand{\Iso}{\mathrm{Iso}(G)}

\newcommand{\Rad}{\mathrm{Rad}}

\newcommand{\range}{\mathbf{r}}
\newcommand{\source}{\mathbf{s}}
\newcommand{\cCc}{\mathcal{C}_c}

\title{Invariant measures and traces on groupoid $\boldsymbol{\mathrm{C}^\ast}$-algebras}

﻿
\author[A. Miller]{Alistair Miller}
\address[A.~Miller]{%
    Department of Mathematics\\
    KU Leuven\\
    Celestijnenlaan 200B\\
 box 2400\\
    3001 Leuven\\
    Belgium}
\email{alistair.miller@kuleuven.be}

\author[E. Scarparo]{Eduardo Scarparo}
\address[E. Scarparo]{Center for Engineering, Federal University of Pelotas, R. Benjamin Constant 989,
	96010-020, Pelotas/RS, Brazil}
\email{eduardo.scarparo@ufpel.edu.br}

\begin{document}

\subjclass[2020]{46L05}
    
    	\begin{abstract}
We provide sufficient conditions for the existence of a trace on the essential $\mathrm{C}^\ast$-algebra of a (not necessarily Hausdorff) \'etale groupoid $G$ which extends an invariant measure $\mu$ on the unit space of $G$. In particular, it suffices for the isotropy groups of $G$ to be amenable, or for $G$ to be essentially free with respect to $\mu$. 

We also show that $G$ is essentially free with respect to an invariant measure $\mu$ if and only if $\mu$ extends to a unique trace on the full $\mathrm{C}^\ast$-algebra of $G$.

We work in the generality of possibly infinite measures and, accordingly, possibly unbounded traces. Moreover, whenever possible, we state our results for twisted groupoids.

As an application, we show that gauge-invariant algebras of finite-state self-similar groups admit a unique tracial state.
	\end{abstract}
	
	\maketitle

    \section{Introduction}

The construction of a $\cs$-algebra from a twisted \'etale groupoid provides a framework for $\cs$-algebras built from dynamical, combinatorial and group-theoretic data. On top of unifying many natural constructions of $\cs$-algebras, the $\cs$-algebras of twisted \'etale groupoids exhaust the class of Elliott-classifiable $\cs$-algebras \cite{Li20}. The Classification Programme \cite{Win18, GLN23} has classified these $\cs$-algebras by the Elliott invariant, which consists of K-theoretic and tracial data. The focus of this article is on understanding and computing the trace spaces of twisted \'etale groupoid $\cs$-algebras.

Examples of \'etale groupoids coming from dynamics and geometry are often non-Hausdorff \cite[Chapter 5]{MM03}, \cite{CEPSS}, \cite[Example 7.8]{KS22a}. The groupoid of germs of a group action locally encodes the homeomorphisms of the action. If the action is not topologically free, this groupoid can be non-Hausdorff, such as for the action of the Grigorchuk group on the boundary of the $2$-regular rooted tree. Moreover, the theory of self-similar groups provides an array of (often simple, nuclear) $\cs$-algebras with non-Hausdorff groupoid models (see \cite{Nek09,MS24} for some K-theory computations). Twists over Hausdorff \'etale groupoids arise through Kumjian--Renault reconstruction \cite{Kum86, Ren08} from the data of a Cartan pair. This has recently been extended to the non-Hausdorff setting, using the weaker notion of an essential Cartan pair \cite{ExelPitts22,Tay25}.

Non-Hausdorffness of an \'etale groupoid $G$ introduces a series of technical challenges in the study of its $\cs$-algebras. Functions in the convolution algebra $\cCc(G)$ need not be continuous, so we do not have access to a genuine conditional expectation $E \colon \cs(G) \to C_0(G^0)$. Moreover, the ideal structure of the reduced $\cs$-algebra $\cs_r(G)$ may become poorly behaved, which is remedied through the introduction of the essential $\cs$-algebra $\cs_\ess(G)$ \cite{KM21}, a quotient of $\cs_r(G)$. It is the essential $\cs$-algebra which is used to perform Kumjian--Renault reconstruction in the non-Hausdorff setting. 

By a \emph{trace} on a $\cs$-algebra $A$, we mean a positive tracial linear functional on the Pedersen ideal $\Ped(A)$. For unital $A$ we have $\Ped(A) = A$ and every trace is bounded. Any trace on the full $\cs$-algebra $\cs(G,\mathcal L)$ of a twisted \'etale groupoid $(G,\mathcal L)$ restricts to an invariant Radon measure on $G^0$, and conversely every invariant Radon measure on $G^0$ has a canonical extension to a trace on $\cs_r(G,\mathcal L)$ (thus also for $\cs(G,\mathcal L)$).

However, this breaks down for the essential $\cs$-algebra, because the canonical trace defined on the reduced $\cs$-algebra for a given invariant Radon measure may not vanish on the ideal determining the essential $\cs$-algebra. To address this issue, Kwaśniewski, Meyer and Prasad assume in some of the results in \cite{KMP} that the reduced and essential $\cs$-algebras coincide. 

Nonetheless, the measure may extend to some other trace on $\cs_\ess(G, \mathcal L)$; in fact we do not know whether every invariant Radon measure admits an extension to a trace on the essential $\cs$-algebra. 
In Section \ref{section:ess} we obtain the following sufficient criteria for the existence of an extension:

\begin{thmintro}\label{tie}
Let $(G,\mathcal L)$ be a twisted étale groupoid and $\mu$ an invariant Radon measure on $G^0$. Suppose that at least one of the following conditions holds:
\begin{enumerate}
\item Every isotropy group of $G$ is amenable and the twist $\mathcal L$ is trivial;
\item $G$ is essentially free with respect to $\mu$;\label{ess free intro}
\item $G$ is a group bundle and $\mu$ is a probability measure.
\end{enumerate} 
Then there exists a trace on $\cs_\ess(G,\mathcal L)$ which extends $\mu$. In case \eqref{ess free intro}, the canonical extension of $\mu$ descends to $\cs_\ess(G,\mathcal L)$.
\end{thmintro}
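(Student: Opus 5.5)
Throughout I would view a trace on $\cs_\ess(G,\mathcal L)$ as a trace $\tau$ on $\cs_r(G,\mathcal L)$ that vanishes on the ideal $J$ of singular elements, the kernel of $\cs_r \to \cs_\ess$. The main tool is a criterion in terms of the essential conditional expectation $E_\ess\colon \cs_r(G,\mathcal L)\to B(G^0)/M(G^0)$ into Borel functions modulo meagre functions. An element $a\ge 0$ lies in $J$ iff $E_\ess(a)=0$, i.e. the function $E(a)$ (defined pointwise by $f\mapsto f|_{G^0}$ on $\cCc$ and extended by continuity to bounded Borel functions) is supported on a meagre set. Two facts will be used repeatedly. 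First, the set $D\subseteq G^0$ of units at which $G$ is not Hausdorff-behaved (the closure points witnessing failure of continuity of $f|_{G^0}$) is meagre and is contained in the set of units with nontrivial isotropy. Second, for $f\in\cCc(G)$, the restriction $f|_{G^0}$ differs from a genuinely continuous function only on such a set.

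For case \eqref{ess free intro} I would show directly that the canonical trace $\tau_\mu(a)=\int E(a)\,d\mu$ vanishes on $J$. Take $a\in J\cap \Ped$ positive. Then $E(a)$ is a nonnegative Borel function supported in a meagre set contained in $\{x: G^x_x\ne\{x\}\}$. Essential freeness says this isotropy set is $\mu$-null, so $\int E(a)\,d\mu=0$. Since $J\cap\Ped$ is spanned by its positive part, $\tau_\mu|_J=0$. The care needed here is only in checking that $E(a)$ really is supported in the isotropy set for general $a$, not just for $a\in\cCc$. I would handle this with the approximation argument, using Borel functions and dominated convergence on compact sets where $\mu$ is finite.

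For case (1) I would replace $\tau_\mu$ by a trace built from the trivial characters of the isotropy groups. Amenability of $G_x^x$ lets the trivial representation of $\cs(G_x^x)$ factor through $\cs_r(G_x^x)$. Inducing along $\mu$ then gives a positive functional
\[
\tau(f)=\int_{G^0}\sum_{g\in G_x^x} f(g)\,d\mu(x), \qquad f\in\cCc(G).
\]
I would check that this is tracial using invariance of $\mu$ and conjugation-invariance of the isotropy sums. Positivity and extension to $\Ped(\cs_r)$ follow from the amenability factorization: each $\tau_x$ is a state on $\cs_r$, continuous via the regular representation on $\ell^2(G_x)$, because the trivial representation of the amenable group $G_x^x$ is weakly contained in its regular representation. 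It remains to show $\tau$ kills $J$. For $a\in J$ positive, the generalized statement I would aim for is that $\sum_{g\in G_x^x}a(g)$ vanishes off a meagre set and in fact off a $\mu$-null set. This is the main obstacle, since meagre sets need not be $\mu$-null. What I would try first is to show that the function $x\mapsto\tau_x(a)$ is lower semicontinuous, or at least constant on orbits and continuous on a comeagre invariant set, using that summing over the whole isotropy group "fills in" the non-Hausdorff defects: the values at the limit points of a net of units are all collected in the isotropy sum. This would give $\tau_x(a)=0$ for every $x$, not just generically. The triviality of the twist is used so that the isotropy sum is a genuine character sum.

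For case (3), with $G$ a group bundle and $\mu$ a probability measure, I would take a weak$^*$ cluster point of states. For comeagre generic $x$, the quotient map to $\cs_\ess$ is compatible with evaluation at units of trivial non-Hausdorff defect, giving tracial states on the fibre quotient. Since $\mu$ is finite, I would integrate a measurable choice of tracial states (e.g. the canonical trace $\delta_e$ on generic fibres) and correct on the meagre defect set by a limiting procedure. Then I would take a weak$^*$ limit of $\tau_\mu\circ$(cut-downs), using compactness of the state space, which is available because $\mu$ is a probability measure. The final step is to check that the limit restricts to $\mu$ on $C_0(G^0)$.
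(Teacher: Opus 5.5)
Your candidate trace in case (1) is the paper's, and your argument for case (2) is essentially the paper's: for positive $a\in J$, $E(a)$ is supported in the dangerous units, which lie in $\source(\Iso\setminus G^0)$, and one localises to $a=fcf$. The gap is in case (1), at exactly the step you flag as the main obstacle. You need $\tau_x(a)=\sum_{g\in G^x_x}a(g)$ to vanish on $J$ for \emph{every} $x$, and the route you propose for this does not work, because $x\mapsto\tau_x(a)$ is not lower semicontinuous on positive elements. Consider the germ groupoid of an involution $s$ of a tripod that fixes one arm pointwise and swaps the other two. Take $b=h+\xi$ with $h\in C_c(G^0)$, $\xi\in C_c(\Theta_s)$ and $h(0)=\xi([s,0])=1$. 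Then $\tau_0(b^*b)=|h(0)+\xi([s,0])|^2=4$, whereas $\tau_x(b^*b)=\sum_{g\in G_x}|b(g)|^2\to 2$ as $x\to 0$ along a swapped arm. Structurally, limits of evaluations at non-dangerous units do not collect the whole isotropy sum. Along an ultranet $x_\lambda\to x$ they converge to $\varphi_H(a)=\sum_{h\in H}a(h)$, where $H\le G^x_x$ is the subgroup of limit points of $(x_\lambda)$, which may be trivial. Moreover, $\varphi_{G^x_x}$ is not dominated by $\varphi_H$ on positive elements: already for a group and $H$ trivial, $|\sum_g b(g)|^2$ can exceed $\sum_g|b(g)|^2$. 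Continuity of $x\mapsto\tau_x(a)$ on a comeagre set cannot help either since, as you note, $\mu$ may charge meagre sets.

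The missing idea is that $\varphi_H$ and $\varphi_{G^x_x}$ are related by weak containment, and this is where amenability is used. First reduce to an open subgroupoid covered by countably many bisections, so that non-dangerous units are dense. Evaluation at a non-dangerous unit kills $J$, hence so do the limit $\varphi_H$ and its GNS representation, the quasi-regular representation $L_{G_x/H}$ on $\ell^2(G_x/H)$. Amenability of $G^x_x$ makes $H$ co-amenable. Compressing $L_{G_x/H}$ to $\ell^2(G^x_x/H)$ and applying a state equal to $1$ on the quasi-regular unitaries gives $L_{G_x/G^x_x}\prec L_{G_x/H}$. So the vector state $\tau_x$ of $L_{G_x/G^x_x}$ kills $J$ for every $x$ (Theorem \ref{thm:sum}, Lemma \ref{lem:inc}, Theorem \ref{thm:new}), and integrating over $\mu$ after the $a=fcf$ reduction gives the descent. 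Your use of amenability, continuity of $\tau_x$ on $\cs_r$, is only the case $H=\{x\}$. It says nothing about $J$, because $\lambda_x$ need not factor through $\cs_\ess(G)$; and on $\cs(G)$, $\tau_x$ is a state without any amenability.

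Two smaller points. In (2), essential freeness gives $\mu_{\mathrm{sf}}(\{x\mid G^x_x\ne\{x\}\})=0$ only when $G$ is covered by countably many bisections, and it gives $\mu_{\mathrm{sf}}$-nullity rather than $\mu$-nullity, which is harmless on $\supp f$. So you must first pass to such an open subgroupoid $G'\supseteq G^0$ containing the approximants $c_n$; the same reduction is needed for $D^0$ to be meagre. In (3), your correction on the defect set must be made precise, and it cannot come from $\tau_\mu$ itself, which need not vanish on $J$ when $\mu(D^0)>0$. Instead, approximate $\mu$ weak$^*$ by convex combinations of point masses at the dense set of non-dangerous units, whose canonical traces descend to $\cs_\ess(G,\cL)$. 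Then take a cluster point in the compact set of contractive traces, and pass to a directed union of countably covered subgroupoids in general.
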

In particular, if $G$ is amenable, then every invariant Radon measure on $G^0$ admits an extension to a trace on $\cs_\ess(G)$.

Care has to be taken with the definition of essential freeness in the above theorem in full generality (see Definition \ref{def:ess free}), but under mild conditions,\footnote{It suffices for example for $G$ to be second countable.} $G$ is essentially free with respect to $\mu$ if $\mu(\{x \in G^0 \mid G^x_x \ne \{x\}\}) = 0$.  
If an \'etale groupoid $G$ is essentially free with respect to an invariant Radon measure with full support, then it is topologically free. On the other hand, if $G$ satisfies one of various notions of almost finiteness \cite{Mat12,Ker20}, it will be essentially free with respect to any invariant Radon measure (see \cite[Remark 6.6]{Mat12} and \cite[Lemma 2.7]{GGGKN24}).

In \cite[Theorem 2.7]{KTT}, Kawamura, Takemoto and Tomiyama showed that essential freeness of the action of a group $\G$ on a compact space $X$ with respect to an invariant Radon probability measure $\mu$ is equivalent to $\mu$ admitting a unique extension to a tracial state on the crossed product $C(X)\rtimes\G$. This result was extended to the setting of Hausdorff \'etale groupoids in \cite{LZ26}, and can be deduced for second countable non-Hausdorff \'etale groupoids from \cite[Corollary 2.4]{NS22}. We push the result further to include infinite measures, and show that only one direction holds in the twisted setting.  Whereas the proof in \cite{NS22} relies on Renault's disintegration theorem, our approach in this work is more direct, in the spirit of \cite{KTT} and \cite{LZ26}. 

\begin{thmintro}[Theorems \ref{thm:u} and \ref{thm:nu}]\label{tif}
Let $\mu$ be an invariant Radon measure on the unit space of an étale groupoid $G$ and let $\mathcal L$ be a twist over $G$. Then $\mu$ extends to a unique trace on $\cs(G,\mathcal L)$ if $G$ is essentially free with respect to $\mu$. The converse holds for the trivial twist.
\end{thmintro}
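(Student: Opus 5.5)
We prove the two directions separately. Throughout, $\mu$ is identified with the positive functional $f\mapsto\int f\,d\mu$ on $\cCc(G^0)$. By an extension we mean a trace $\tau$ on $\cs(G,\mathcal L)$ with $\cCc(G)\subseteq\Ped$ on which $\tau|_{\cCc(G^0)}=\mu$. For a bisection $U$ and a section $f\in\cCc(U,\mathcal L)$, we will analyse $\tau(f)$.

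\textbf{Uniqueness under essential freeness.} Let $\tau$ be any extension. The space $\cCc(G,\mathcal L)$ is spanned by sections supported on open bisections, and $\tau$ is determined by its values there. It therefore suffices to show that
\[
\tau(f)=\int_{G^0} f|_{G^0}\,d\mu
\]
for $f$ supported on a bisection $U$, where $f|_{G^0}$ is the canonical restriction to the open set $U\cap G^0$ (possibly discontinuous when $G$ is non-Hausdorff).

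The key inequality is a Cauchy--Schwarz estimate. For $h\in\cCc(G^0)$ with $0\le h\le 1$, the trace property gives $\tau(hfh)$, and $hfh$ is supported on $U$ with "source/range weighted" values $h(r(\gamma))f(\gamma)h(s(\gamma))$. Moreover
\[
|\tau(hf)|^2\le \tau(h f f^* h)\,\tau(h)\le \|f\|_\infty^2\,\mu(\supp h)\,\mu(h)
\]
in suitable form. Decompose the set $\supp(f)\cap s^{-1}$ of units according to whether $\gamma\in U$ satisfies $r(\gamma)=s(\gamma)$, i.e.\ whether $\gamma$ lies in the isotropy.

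On the set $K=\{x\in s(U): r(U x)\ne x\}$, which is open, use a partition of unity by open sets $V$ with $\alpha_U(V)\cap V=\emptyset$. For $h$ supported in $V$ we get $hfh=0$, and $\tau(hf)=\tau(fh\cdot)$ relations (cutting $f=\sum h_i f$ and using $\tau(h_i f)=\tau(h_i f h_i')$ with $h_i'$ near $1$ on $s$-images) give zero contribution.

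The remaining part is contained in the set of $x$ with $\gamma=U x\in G^x_x$. Essential freeness (Definition~\ref{def:ess free}) says the points where $\gamma$ is nontrivial isotropy form a $\mu$-null set not meeting the interior. Approximate this set from outside by open sets of small measure, and bound the contribution by the Cauchy--Schwarz estimate $|\tau(hf)|\le\|f\|\,\mu(h)$. Since $\mu$ is Radon and the sets are handled inside a compact set, outer regularity lets the error go to $0$. Where $\gamma$ is a unit, $f$ agrees with an element of $\cCc(G^0)$ locally; the non-Hausdorff discrepancy is again supported on a null set. This yields the formula, and hence uniqueness. Existence is the canonical trace $\mu\circ E$ on the reduced algebra composed with the quotient map.

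\textbf{Converse for the trivial twist.} Suppose $G$ is not essentially free. Then there is an open bisection $U$ such that the set $A=\{x\in s(U): Ux\in G^x_x\setminus G^0\}$ has positive (outer) measure. We build a second trace,
\[
\tau'(f)=\int_{G^0}\sum_{\gamma\in G^x_x}f(\gamma)\,d\mu(x),
\]
which is the trace coming from the trivial character on the isotropy groups and is defined on $\cCc(G)$. Positivity follows by writing it as an integral over $x$ of $\langle \pi_x(f)\delta,\delta\rangle$-type states: the induced representation from the trivial character of $G^x_x$ on $\ell^2(G_x/G_x^x)$ paired with the vector $\delta_{[x]}$. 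It is bounded by the full norm, and the trivial twist is used exactly here. Tracialness uses invariance of $\mu$ and the conjugation bijections $G^x_x\to G^y_y$.

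Then $\tau'$ and the canonical trace differ on $f\in\cCc(U)$ with $f\ge0$ and $f>0$ on $A$. Measurability of $x\mapsto\sum f(\gamma)$ is a Baire-type issue; we handle it via Borel-ness of isotropy in étale groupoids, or, for general non–second-countable $G$, by working with the exact form of Definition~\ref{def:ess free}.

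\textbf{Main obstacle.} The main obstacle is the uniqueness estimate in the non-Hausdorff case, namely controlling $\tau$ on the discontinuous part of $f$ near the boundary of isotropy.
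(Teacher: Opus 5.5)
Your converse is the paper's argument for Theorem~\ref{thm:nu}. You define the functional $\int\sum_{g\in G^x_x}f(g)\,\mathrm d\mu(x)$, obtain positivity from the quasi-regular representation on $\ell^2(G_x/G^x_x)$ (Lemma~\ref{prop:ev}), and traciality from invariance.

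For uniqueness your route is genuinely different and more hands-on. The paper trivialises $\mathcal L$ over a relatively compact bisection $U$ and represents $\tau$ on $C_c(U,\mathcal L)$ by a complex Radon measure $\nu$. It then kills $\nu$ on $U\cap\Iso\setminus G^0$ through the absolute-continuity Lemma~\ref{lem:tec}. Once your garbled passages are cleaned up, your argument inlines that lemma, as follows.
\begin{itemize}
\item For $f\in C_c(U,\mathcal L)$ put $C=\source(\supp f)$ and $N=\source(U\cap\Iso\setminus G^0)$; $N$ is relatively closed in $\source(U)$, hence Borel.
\item Since $\mu(C)<\infty$, we get $\mu(N\cap C)=\mu_{\mathrm{sf}}(N\cap C)=0$. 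Outer regularity then gives an open $W\supset N\cap C$ with $\mu(W)<\epsilon$.
\item Take a partition of unity $h_W+h_1+h_2=1$ on $C$ subordinate to $W$, $\source(U)\setminus\source(U\cap\Iso)$ and $\source(U\cap G^0)$.
\item Then $\tau(fh_1)=0$ as in Lemma~\ref{lem:fiso}. Also $fh_2\in C_c(G^0,\mathcal L)=C_c(G^0)$, and $(fh_1)|_{G^0}=0$.
\item Cauchy--Schwarz and traciality give
\[
|\tau(fh_W)|^2\le\tau(fh_Wf^*)\,\tau(h_W)\le\|f\|_\infty^2\Bigl(\int h_W\,\mathrm d\mu\Bigr)^2 .
\]
The same bound $\|f\|_\infty\epsilon$ holds for $\int (fh_W)|_{G^0}\,\mathrm d\mu$, so $|\tau(f)-\int f|_{G^0}\,\mathrm d\mu|\le 2\|f\|_\infty\epsilon$.
\end{itemize}
This also disposes of your ``main obstacle'', because the non-Hausdorff discontinuities of $f|_{G^0}$ all lie over $N$.

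Your route avoids trivialisations and the Riesz representation of complex measures, handles the twist with no extra work, and is quantitative. The paper uses the same three ingredients: vanishing off isotropy, Cauchy--Schwarz, and outer regularity. It packages them as a statement about measures, which reads more structurally but needs more set-up.

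Two points need repair.

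First, in the converse you weaken non-essential-freeness to ``$A$ has positive (outer) measure'' and then take $f>0$ on $A$. This does not work. In Example~\ref{non semifinite example} we have $\mu(A)=\infty$ while $G$ is essentially free, so there $\tau'$ must coincide with the canonical trace by uniqueness. Moreover, no $f\in C_c(U)$ is positive on that $A$. The fix is as follows.
\begin{itemize}
\item Keep the hypothesis $\mu_{\mathrm{sf}}(A)>0$ and use it to find a compact $K\subset A$ with $\mu(K)>0$.
\item Take $f\in C_c(U)$ with $0\le f\le 1$ and $f=1$ on $(\source|_U)^{-1}(K)$.
\item Then $\tau'(f)-\tau_\mu(f)=\int_A f\circ(\source|_U)^{-1}\,\mathrm d\mu\ge\mu(K)>0$.
\end{itemize}

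Second, when $\mu(G^0)=\infty$, neither $\tau'$ nor your $\mu\circ E$ is bounded by the full norm, so ``extend by continuity'' is not available. You need boundedness on the corners $\cCc(G)|_Y$ for relatively compact open $Y\subset G^0$. You also need an extension result from $\cCc(G,\mathcal L)$ to $\Ped(\cs(G,\mathcal L))$, which is what the paper imports from \cite[Proposition 7.2 and Corollary 7.5]{GM26}.
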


For a groupoid $G$ which is essentially free with respect to every invariant Radon measure on $G^0$, this puts traces on $\cs_\ess(G,\mathcal L)$ in bijection with invariant Radon measures. We show that this bijection is moreover a homeomorphism.

\begin{thmintro}[Corollary \ref{cor:trace space}]
Let $(G,\mathcal L)$ be a twisted \'etale groupoid such that $G$ is essentially free with respect to every invariant Radon measure. Then the space of traces on $\cs_\ess(G,\mathcal L)$ is isomorphic to the space of invariant Radon measures on $G^0$.
\end{thmintro}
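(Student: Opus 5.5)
Assuming Theorems \ref{thm:u} and \ref{thm:nu}, together with part \eqref{ess free intro} of Theorem \ref{tie}, the bijection is nearly immediate; the work lies in showing it is a homeomorphism. I take the space of traces on $\cs_\ess(G,\mathcal L)$ with the topology of pointwise convergence on the Pedersen ideal, or on a dense subalgebra such as the image of $\cCc(G,\mathcal L)$. I take the invariant Radon measures with the vague topology, i.e.\ $\mu_i\to\mu$ when $\int f\,d\mu_i\to\int f\,d\mu$ for all $f\in C_c(G^0)$. The map I use is restriction, $\tau\mapsto\tau|_{C_c(G^0)}$. This is well defined because $C_c(G^0)$ lies in the Pedersen ideal of $\cs_\ess(G,\mathcal L)$: the inclusion $C_0(G^0)\hookrightarrow \cs_\ess(G,\mathcal L)$ is nondegenerate, and compactly supported elements lie in the Pedersen ideal. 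Traciality together with the fact that $C_c(G^0)$ is invariant under conjugation by bisections gives invariance of the resulting measure, by the same argument the paper uses for $\cs(G,\mathcal L)$.

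For bijectivity, part \eqref{ess free intro} of Theorem \ref{tie} shows that every $\mu$ is hit: the canonical trace $\tau_\mu(f)=\int_{G^0} f|_{G^0}\,d\mu$ descends to $\cs_\ess$. For injectivity, I compose with the quotient map $\cs(G,\mathcal L)\to\cs_\ess(G,\mathcal L)$, which maps Pedersen ideal onto Pedersen ideal. A trace on $\cs_\ess$ extending $\mu$ therefore pulls back to a trace on $\cs(G,\mathcal L)$ extending $\mu$. By Theorem \ref{thm:u} this pullback is unique, and the quotient map is surjective, so two traces on $\cs_\ess$ with the same restriction coincide. Hence every trace $\tau$ equals $\tau_\mu$ with $\mu=\tau|_{C_c(G^0)}$.

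For continuity, restriction is continuous because it is just evaluation on a subset. The inverse $\mu\mapsto\tau_\mu$ is the main obstacle. For $f\in\cCc(G,\mathcal L)$ we have $\tau_\mu(f)=\int f|_{G^0}\,d\mu$, but in the non-Hausdorff case $f|_{G^0}$ is not continuous, so vague convergence does not directly give $\tau_{\mu_i}(f)\to\tau_\mu(f)$. My plan is to exploit essential freeness. I write $f$ as a finite sum of functions supported on open bisections $U$. For such a piece, $f|_{G^0}$ is supported on $U\cap G^0$, with the closure of $U\cap G^0$ relative to $\source(U)$ adding only points of nontrivial isotropy. Those points form a $\mu$-null set for every invariant $\mu$, and I expect Definition \ref{def:ess free} to supply this. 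It should give that $\tau_\mu(f)=\int g\,d\mu$ for $g$ agreeing with $f|_{G^0}$ off a set that is null for all invariant measures. I then approximate $f|_{G^0}$ from above and below by continuous compactly supported functions, as in a Portmanteau argument, with the gap supported near the boundary set of nontrivial isotropy. If that route proves too delicate, the fallback is to use uniqueness plus compactness. Traces bounded on a fixed positive element form a pointwise-compact set, so any limit point of $\tau_{\mu_i}$ is a trace restricting to $\mu$, and by uniqueness it equals $\tau_\mu$. This requires a uniform bound $\tau_{\mu_i}(a)\le C$ on Pedersen-ideal elements, which I would get by dominating $a$ by some $h\in C_c(G^0)_+$ via the Pedersen ideal structure, using the convergence of $\mu_i(h)$. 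This compactness/uniqueness argument is the one I expect actually to carry the proof.
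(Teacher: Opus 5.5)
Your bijectivity argument (Proposition \ref{prop:efe} for surjectivity; pulling back along the quotient and applying Theorem \ref{thm:u} for injectivity) and your fallback compactness/uniqueness argument for continuity of the inverse are essentially the paper's proof, so the Portmanteau route can be dropped. One precision: a general $a\in\Ped(A)_+$ need not be dominated in order by an element of $C_c(G^0)$, so first use Lemma \ref{lem:groupoid ped} to replace $a$ by a tracially equivalent $fcf$ with $f\in C_c(G^0)_+$, which gives $\tau_{\mu_i}(a)\le\|c\|\int f^2\,\mathrm d\mu_i$; the paper then works with an ultranet $\mu_i\to\mu$, so these eventual, element-dependent bounds yield pointwise convergence directly, whereas ``traces bounded on a fixed positive element'' do not form a compact set.
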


In Section \ref{sec:ssg}, we apply Theorems \ref{tie} and \ref{tif} to gauge-invariant algebras of finite-state self-similar groups to see that they admit a unique tracial state.

\begin{ack}
The first author was supported by the Independent Research Fund Denmark through Grant 1054-00094B and by the Research Foundation Flanders (FWO) through Project 1212126N.

The authors thank Kang Li and Ben Steinberg for useful discussions, and the first author thanks Grigoris Kopsacheilis for conversations on essential freeness. We also thank the anonymous referee for their helpful comments and for calling our attention to \cite{EP16}.
\end{ack}

\section{Preliminaries}

An \emph{étale groupoid} $G$ is a topological groupoid whose unit space $G^0$ is a locally compact Hausdorff space and such that the range and source maps $\range,\source\colon G\to G^0$ are local homeomorphisms. By \cite[Proposition 3.2]{Ex08}, this implies that $G^0$ is an open subset of $G$. Moreover, $G$ is Hausdorff if and only if $G^0$ is also a closed subset of $G$ (\cite[Lemma 8.3.2]{SSW20}). The \emph{isotropy} of $G$ is the set $\mathrm{Iso}(G):=\{g\in G \mid \range(g)=\source(g)\}$, which is closed in $G$. A \emph{bisection} is an open subset $U\subset G$ such that the restrictions of $\range$ and $\source$ to $U$ are homeomorphisms onto their images. 

Given $x\in G^0$, let $G_x:=\source^{-1}(x)$ and $G^x:=\range^{-1}(x)$. The \emph{isotropy group} at $x$ is $G_x^x:=\source^{-1}(x)\cap \range^{-1}(x)$. Given $H\leq G_x^x$, we let $G_x/H:=\{gH \mid g\in G_x\}$. 

\subsection*{Groupoids of germs}
A \emph{pseudogroup} is a pair $(S,X)$ where $X$ is a locally compact Hausdorff space and $S$ is a set of homeomorphisms between open subsets of $X$, which is closed under composition and inversion and such that the family of domains $(\dom s)_{s\in S}$ covers $X$.

In what follows, we recall the construction of the groupoid of germs associated to $(S,X)$ (see \cite[Section 3]{Ren08}). 
Let $\Omega:=\{(s,x)\in S\times X \mid x\in \dom s\}$. Given $(s,x)$ and $(t,y)$ in $\Omega$ we will say that $(s,x)\sim(t,y)$ if $x=y$ and there is a neighbourhood $U$ of $x$ such that $U\subset\dom s\cap \dom t$ and $s|_U=t|_U$. The equivalence class of $(s,x)$ will be denoted by $[s,x]$. The quotient $G(S,X): =\frac{\Omega}{\sim}$ is called the \emph{groupoid of germs} of $(S,X)$.
The product of two elements $[t,y]$ and $[s,x]$ is defined if and only if $y=sx$, in which case $[t,y][s,x]=[ts,x]$. Inversion is given by $[s,x]^{-1}:=[s^{-1},sx]$.

Given $s\in S$ and $A\subset \dom s$, let $\Theta( s,A):=\{[s,x] \mid x\in A\}$ and set $\Theta_s := \Theta(s,\dom s)$. The topology on $ G(S,X)$ is the one generated by the basis $\{\Theta( s,U) \mid s\in S, \, \text{$U\subset \dom s$ open}\}$. With this topology, $ G(S,X) $ is an étale groupoid. Moreover, for each $s\in S$ and open set $U\subset \dom s$, the set $\Theta( s,U)$ is a bisection. We identify $G(S,X)^0$ with $X$. For $s \in S$, we consider the set $\Fix_s = \{x \in X \mid x \in \dom s, \, sx = x \}$ of points fixed by $s$. Unpacking the definitions gives the following equalities (see also \cite[Proposition 3.14]{EP16}). 
\begin{lemma}\label{eq:fixb}
Let $(S,X)$ be a pseudogroup. Given $s \in S$, we have $\Theta_s \cap X = \Theta({s,\interior\Fix_s})$ and $\Theta_s \cap \mathrm{Iso}(G(S,X)) = \Theta({s,\Fix_s})$. Consequently, 
\begin{equation*}
\Theta_s\cap \mathrm{Iso}(G(S,X))\setminus X= \Theta(s,\Fix_s \setminus\interior\Fix_s)
\end{equation*}
and $\source(\Theta_s \cap \mathrm{Iso}(G(S,X)) \setminus X) = \Fix_s \setminus\interior\Fix_s$.
\end{lemma}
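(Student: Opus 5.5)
The argument is a direct unpacking of the definitions of germs, units and isotropy in $G(S,X)$. Throughout, $G = G(S,X)$ and $X$ is identified with $G^0$ via $x \mapsto [\mathrm{id}_U, x]$, where $U$ is any open neighbourhood of $x$. Such an identity map lies in $S$, or at least its germ is represented in $S$: since $s^{-1}s = \mathrm{id}_{\dom s}$ and the domains cover $X$, every unit is a germ $[s^{-1}s,x]$. Hence a germ $[s,x]$ is a unit exactly when $s$ agrees with the identity on some neighbourhood of $x$.

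First I prove $\Theta_s\cap X = \Theta(s,\interior\Fix_s)$.
\begin{itemize}
\item If $x\in\interior\Fix_s$, choose an open $V\subset\Fix_s$ containing $x$. Then $s|_V=\mathrm{id}_V$, so $[s,x]$ equals the unit at $x$.
\item Conversely, suppose $[s,x]\in X$. Then $[s,x]=[t,x]$ for some $t$ with $t|_U=\mathrm{id}_U$ on a neighbourhood $U$ of $x$. By the definition of $\sim$ there is a neighbourhood $V\subset U$ of $x$ with $s|_V = t|_V = \mathrm{id}_V$. Thus $V\subset\Fix_s$ and $x\in\interior\Fix_s$.
\end{itemize}

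Next I prove $\Theta_s\cap\mathrm{Iso}(G)=\Theta(s,\Fix_s)$. Since $\source[s,x]=x$ and $\range[s,x]=sx$, the germ $[s,x]$ lies in the isotropy iff $sx=x$, i.e.\ iff $x\in\Fix_s$.

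For the consequences, one point needs care: the germ $[s,x]$ determines $x$, so distinct points of $\dom s$ give distinct germs. Hence $\Theta(s,A)\setminus\Theta(s,B)=\Theta(s,A\setminus B)$ for $B\subset A\subset\dom s$. Taking $A=\Fix_s$ and $B=\interior\Fix_s$, the two equalities above give
\[
\Theta_s\cap\mathrm{Iso}(G)\setminus X=\Theta(s,\Fix_s\setminus\interior\Fix_s).
\]
Finally, $\source(\Theta(s,A))=A$ because $\source[s,x]=x$, which yields the last claim. None of these steps is a real obstacle.
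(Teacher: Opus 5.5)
Your proof is correct and is exactly the unpacking of the definitions that the paper invokes without writing out: the paper only says the equalities follow from the definitions and points to Exel--Pardo. The one imprecision is your opening suggestion that $\mathrm{id}_U$ lies in $S$ for an arbitrary open $U$, but you replace it straight away with the correct description of units as germs $[t^{-1}t,x]$, and that description is all the argument uses.
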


The following characterization of the Hausdorff property for groupoids of germs will not be used in the rest of the paper, but it might be of independent interest (see also \cite[Theorem 3.15]{EP16} and \cite[Lemma 4.3]{NO19}).
\begin{proposition}\label{prop:Hd}
Let $(S,X)$ be a pseudogroup. Then $G(S,X)$ is Hausdorff if and only if, for each $s\in S$, the set $\interior\Fix_s$ is closed in $\dom s$.
\end{proposition}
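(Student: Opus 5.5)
We use the criterion recalled in the preliminaries: an étale groupoid is Hausdorff if and only if its unit space $G^0=X$ is closed in $G=G(S,X)$. So the task is to show that $X$ is closed in $G$ exactly when $\interior\Fix_s$ is closed in $\dom s$ for every $s\in S$. The sets $\Theta_s$, $s\in S$, are open and cover $G$. Hence $X$ is closed in $G$ if and only if $X\cap\Theta_s$ is closed in $\Theta_s$ for every $s$.

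Fix $s\in S$. The map $\source|_{\Theta_s}\colon\Theta_s\to\dom s$, $[s,x]\mapsto x$, is a homeomorphism, because $\Theta_s$ is a bisection with source image $\dom s$. By Lemma \ref{eq:fixb}, $\Theta_s\cap X=\Theta(s,\interior\Fix_s)$, and its image under $\source|_{\Theta_s}$ is $\interior\Fix_s$. Since homeomorphisms preserve relative closedness, $X\cap\Theta_s$ is closed in $\Theta_s$ if and only if $\interior\Fix_s$ is closed in $\dom s$. Combining this with the first paragraph gives both directions.

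The only step needing care is the first reduction. Closedness is local with respect to an open cover: if $C\cap U_i$ is closed in $U_i$ for every member $U_i$ of an open cover, then the complement of $C$ is the union of the open sets $U_i\setminus C$, so it is open. The converse holds trivially. Beyond this, we only need the cited Hausdorff criterion and the fact that $\Theta_s$ is a bisection homeomorphic to $\dom s$ via the source map.
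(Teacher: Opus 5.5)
Your proof is correct and is essentially the paper's argument. Both rest on the criterion that $G(S,X)$ is Hausdorff iff $X$ is closed, on Lemma \ref{eq:fixb} ($\Theta_s\cap X=\Theta(s,\interior\Fix_s)$), and on the homeomorphism $\source|_{\Theta_s}\colon\Theta_s\to\dom s$; the paper runs a net argument inside a single $\Theta_s$ for each direction, where you instead check closedness locally on the open cover $\{\Theta_s\}_{s\in S}$.
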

\begin{proof}
$(\implies)$: Suppose that there is $s\in S$ such that $\interior\Fix_s$ is not closed in $\dom s$. Take a net $(x_i)\subset  \interior\Fix_s$ with $x_i\to x\in \Fix_s\setminus \interior\Fix_s$. Then $([s,x_i])_i\subset  X$ and $[s,x_i]\to [s,x]\notin X$, thus showing that $X$ is not closed in $G(S,X)$.

$(\impliedby)$: Suppose that $X$ is not closed in $G(S,X)$. Take a net $([s_i,x_i])\subset  X$ with $[s_i,x_i]\to [s,x]\notin X$. Take $i_0$ such that $i\geq i_0$ implies $[s_i,x_i]\in\Theta(s,\dom s)$. In particular, for $i\geq i_0$, we get $x_i\in\interior \Fix_s$. Since $x_i\to x$ and $x\notin \interior \Fix_s$, we conclude that $\interior\Fix_s$ is not closed in $\dom s$.
\end{proof}
\begin{remark}
In \cite[Section 4]{Ex08}, Exel associated to any action of an inverse semigroup on a locally compact Hausdorff space a groupoid that he also called the groupoid of germs of the action. A pseudogroup $(S,X)$ may be viewed as a faithful action of $S$ on $X$. However, in general, in order for the notion of groupoid of germs $G(S,X)$ that we work with to coincide with Exel's construction, it is necessary to assume that the domains of $S$ form a basis for $X$. This can always be arranged by enlarging $S$ so that it is closed under restriction to open subsets of $X$, which of course does not change $G(S,X)$. Furthermore, if the domains of $S$ form a basis for $X$, then Proposition \ref{prop:Hd} is a special case of \cite[Theorem 3.15]{EP16}.
\end{remark}

\subsection*{Twisted groupoids}
In this subsection, we recall the picture of twisted étale groupoids using Fell line bundles (see \cite[Remark 7.7]{KM21}).

\begin{definition}\label{def:flb}
Let $G$ be an étale groupoid. A \emph{Fell line bundle} over $G$ consists of the following data: a topological space $\mathcal L$, a continuous open surjection $p \colon \mathcal L \to G$ and the structure of a one-dimensional complex normed space on each fibre $\mathcal L_g := p^{-1}(g)$. It is moreover equipped with bilinear multiplication maps $\cdot \colon \mathcal L_g \times \mathcal L_h \to \mathcal L_{gh}$ (when $\source(g) = \range(h)$) and conjugate-linear adjoint maps $\ast \colon \mathcal L_g \to \mathcal L_{g^{-1}}$, such that:
\begin{enumerate}
\item Addition $+\colon \{ (a,b) \in \mathcal L \times \mathcal L \mid p(a) = p(b) \} \to \mathcal L$ is continuous.
\item Scalar multiplication $\mathbb C \times \mathcal L \to \mathcal L$ is continuous.
\item The norm $\| \cdot \| \colon \mathcal L \to \mathbb R$ is continuous.
\item If $g \in G$ and $(a_i)$ is a net in $\mathcal L$ with $p(a_i) \to g$ and $\|a_i\| \to 0$, then $a_i \to 0_g$ in $\mathcal L$. 
\item Multiplication $\cdot \colon \{(a,b) \in \mathcal L \times \mathcal L \mid \source(p(a)) = \range(p(b)) \} \to \mathcal L$ is continuous, associative and makes the norm multiplicative. 
\item The adjoint $\ast \colon \mathcal L \to \mathcal L$ is continuous, involutive, antimultiplicative and norm-preserving.
\item For each $a \in \mathcal L$, we have $a^*a\geq 0$.\label{positivity}
\end{enumerate}

We call the pair $(G,\cL)$ a \emph{twisted étale groupoid}.
\end{definition}
Note that, for each $a\in \cL$, we have $\|a^*a\| = \|a\|^2$. This gives the fibre $\mathcal L_x$ at each unit $x \in G^0$ the structure of a $\cs$-algebra, which must be $\mathbb C$. Positivity of $a^*a$ in %Definition \ref{def:flb} 
\eqref{positivity} is to be interpreted with respect to this C*-algebra. 

Given $g\in G$, it is not difficult to see that the restricted topology on $\mathcal L_g$ coincides with the norm topology.

Given $U\subset G$, let $\cL|_U:=p^{-1}(U)$. If $U$ is Hausdorff, then $\cL|_U$ is Hausdorff as well. Indeed, take $v,w\in \cL|_U$ distinct points. If $p(v)\neq p(w)$, then it follows from continuity of $p$ that $v$ and $w$ can be separated by open sets. Suppose $p(v)=p(w)$ and that there is a net $(z_i)$ in $\cL|_U$ such that $z_i\to v,w$. Continuity of addition, scalar multiplication and the norm implies that $\| 0_{p(z_i)}\| \to \|v-w\|$, and thus $v=w$, showing the claim. In particular, $\mathcal L|_U\to U$ is a Banach bundle in the sense of \cite[II.13.4 Definition]{FellDoranI} and, if $U$ is also open, by \cite[C17 Theorem]{FellDoranI}, given $b\in \mathcal L|_U$, there is a continuous section $f\colon U\to \mathcal L$ such that $f(p(b))=b$.

\begin{example}
The \emph{trivial Fell line bundle} over $G$ is $\cL=G\times \C$, with the product topology, and multiplication and adjoint maps inherited from $\C$.
\end{example}

For a unit $x \in G^0$, we write $1_x$ for the unit of the one-dimensional $\cs$-algebra $\mathcal L_x$. Note that, given $a\in\cL|_{G_x}$ and $b\in \cL|_{G^x}$, we have $a1_x = a$ and $  1_x b= b$.

We say that an open and Hausdorff subset $U\subset G$ is \emph{trivialisable} (with respect to $\cL$) if there exists an isometric isomorphism (in the sense of \cite[II.13.8 Definition]{FellDoranI}) between the trivial Banach bundle $U\times \C$ and $\cL|_U$. 

\begin{proposition}\label{prop:trivial}
Let $(G,\mathcal L)$ be a twisted \'etale groupoid.
\begin{enumerate}
\item The groupoid $G$ can be covered by trivialisable sets.

\item The map $(x,\lambda) \mapsto \lambda 1_x \colon G^0 \times \mathbb C \to \mathcal L|_{G^0}$ is an isometric isomorphism which preserves multiplication and the adjoints.
\end{enumerate}
\end{proposition}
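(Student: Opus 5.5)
For (1), I fix $g\in G$ and produce a trivialisable neighbourhood of it. First I choose an open bisection $U\ni g$. A bisection is homeomorphic to its range in $G^0$, so it is Hausdorff. By the discussion preceding the proposition, $\mathcal L|_U\to U$ is then a Banach bundle. Fell--Doran [C17] gives a continuous section $f\colon U\to\mathcal L$ with $f(g)$ any prescribed nonzero vector of $\mathcal L_g$, say of norm one. Since the norm is continuous, $\|f\|$ is continuous and nonzero at $g$. Shrinking, I set $V=\{h\in U \mid f(h)\neq 0\}$, an open Hausdorff neighbourhood of $g$. On $V$ the normalized section $e(h)=f(h)/\|f(h)\|$ is continuous, using continuity of scalar multiplication and of $\|\cdot\|$. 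I then define $\Phi\colon V\times\C\to\mathcal L|_V$ by $\Phi(h,\lambda)=\lambda e(h)$.

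I need $\Phi$ to be an isometric isomorphism of Banach bundles. It is fibrewise linear, since each fibre is one-dimensional and $e(h)$ spans it. It is isometric because $\|\lambda e(h)\|=|\lambda|$. It is continuous by continuity of scalar multiplication. The main obstacle is continuity of $\Phi^{-1}$. By Fell--Doran's criterion [II.13.8 and surrounding results], a continuous fibrewise-linear isometric bijection between Banach bundles over the same base is a homeomorphism, so I can invoke that directly. Alternatively I argue by hand. Suppose $a_i\to a$ in $\mathcal L|_V$, and write $a_i=\lambda_i e(h_i)$ and $a=\lambda e(h)$. Then $h_i\to h$ by continuity of $p$. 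Also $a_i-\lambda e(h_i)\to 0_h$ by continuity of addition and scalar multiplication. Applying the continuous norm gives $|\lambda_i-\lambda|\to 0$.

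For (2), I first check that the map is well defined. Each $\mathcal L_x$ is a one-dimensional $\cs$-algebra, as noted above, with unit $1_x$ of norm one. Hence $(x,\lambda)\mapsto\lambda 1_x$ is a fibrewise linear isometric bijection and preserves multiplication and adjoints: $(\lambda1_x)(\mu1_x)=\lambda\mu1_x$ and $(\lambda1_x)^*=\bar\lambda1_x$. Given continuity of $x\mapsto 1_x$, the same argument as in (1) then upgrades the map to a homeomorphism, since $G^0$ is open and Hausdorff.

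The remaining point, and the key one here, is continuity of $x\mapsto 1_x$. Near $x_0$ I take a continuous section $f$ of $\mathcal L|_{G^0}$ with $f(x_0)=1_{x_0}$, again via [C17], and consider $f^*f$. This is continuous by continuity of multiplication and adjoint. Using property \eqref{positivity}, it equals $\|f(x)\|^2 1_x$. Near $x_0$ we have $\|f(x)\|\neq 0$, so $1_x=\|f(x)\|^{-2}f(x)^*f(x)$ is continuous there.
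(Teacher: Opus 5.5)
Your proposal is correct and tracks the paper closely. Part (1) is the paper's argument: an open Hausdorff neighbourhood, a section from [C17] shrunk to where it is nonzero, normalisation, and the same computation $\|a_i-\lambda e(h_i)\|=|\lambda_i-\lambda|\to 0$ for continuity of the inverse. In (2) you make the same reduction to continuity of $x\mapsto 1_x$.

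The difference is how you prove that continuity.
\begin{itemize}
\item The paper writes $\eta(x_i)=\lambda_i 1_{x_i}$. From continuity of the norm, multiplication and addition it gets $|\lambda_i|\to 1$ and $|\lambda_i^2-\lambda_i|\to 0$, hence $\lambda_i\to 1$. This uses only that $1_x$ is a norm-one idempotent.
\item You give the explicit local formula $1_x=\|f(x)\|^{-2}f(x)^*f(x)$. This uses the adjoint and positivity, and avoids nets entirely.
\end{itemize}
Both work, and yours is slightly more direct.

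State explicitly the step identifying the scalar in $f(x)^*f(x)=c\,1_x$. Multiplicativity of the norm and norm-preservation of the adjoint give $|c|=\|f(x)\|^2$, and positivity then gives $c\ge 0$. For units, that positivity already holds because $\mathcal L_x$ is a $\mathrm{C}^*$-algebra.

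Your appeal to [II.13.8] for automatic continuity of the inverse is imprecise, since that is a definition. Your by-hand argument makes the citation unnecessary anyway.
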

\begin{proof}
(1) Fix $g_0\in G$ and take an open Hausdorff neighbourhood $U$ of $g_0$ for which there exists a continuous section $f\colon U\to\cL$ with $f(g) \ne 0_g$ for each $g\in U$. By normalising, we can assume that $\|f(g)\|=1$ for each $g\in U$. 

Let $T\colon U\times\C\to \cL|_U$ be given by $T(g,\lambda):=\lambda f(g)$. We claim that $T$ is an isometric isomorphism. The only subtle part is showing that, given a net $(g_i,\lambda_i)\subset U\times \C$ with $\lambda_if(g_i)\to T(g,\lambda)=\lambda f(g)\in \cL|_U$, we get $\lambda_i\to \lambda$. Indeed, we have that $\lambda_if(g_i)-\lambda f(g_i)\to 0_{g},$ hence $|\lambda_i-\lambda|\to 0$.

(2) By the above argument, it suffices to show that the section $x \mapsto 1_x$ is continuous. Fix $x\in G^0$ and take a continuous section $\eta \colon G^0 \to \mathcal L$ with $\eta(x) = 1_x$ (using \cite[C17 Theorem]{FellDoranI}). Let $(x_i)$ be a net with $x_i \to x$ and let $\lambda_i \in \mathbb C$ satisfy $\eta(x_i) = \lambda_i 1_{x_i}$. Then $\eta(x_i) \to 1_x$, so by continuity of the norm, multiplication and addition we have $|\lambda_i| \to 1$ and $|\lambda_i^2-\lambda_i| \to 0$. It follows that $\lambda_i \to 1$, and therefore $1_{x_i} \to 1_x$. 
\end{proof}

 \subsection*{Groupoid $\boldsymbol{\cs}$-algebras}
Let $(G,\cL)$ be a twisted étale groupoid. For an open Hausdorff set $U \subset  G$, we write $C_c(U, \mathcal L)$ for the space of compactly supported continuous sections $U \to \mathcal L$. We view an element of $C_c(U, \mathcal L)$ as a section on $G$ by setting the value to be $0$ outside $U$. 

Let $\cCc(G, \mathcal L)$ be the linear span with the space of sections on $G$ of the union of $C_c(U, \mathcal L)$ for all open and Hausdorff sets $U$. If $ G $ is not Hausdorff, then the sections in $\cCc( G,\cL )$ are not necessarily continuous, but they are Borel measurable. 
\begin{example}
If $\cL=G\times\C$ is the trivial Fell line bundle, we can see elements of $\cCc(G,\cL)$ as complex-valued functions. In this case, we shall usually omit $\cL$, thus writing $\cCc(G)$ instead of $\cCc(G,\cL)$, for instance.
\end{example}

The proof of the following result is analogous to \cite[Proposition 3.10]{Ex08}.

\begin{proposition}\label{prop:covera}
Let $(G,\mathcal L)$ be a twisted \'etale groupoid. If $\mathcal U$ is an open cover of $G$ by Hausdorff sets, then $\cCc(G,\mathcal L) = \sum_{U \in \mathcal U} C_c(U,\mathcal L)$.

\end{proposition}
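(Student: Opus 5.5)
The inclusion $\sum_{U \in \mathcal U} C_c(U,\mathcal L) \subset \cCc(G,\mathcal L)$ is immediate, because each $U\in\mathcal U$ is open and Hausdorff. For the reverse inclusion it is enough to fix an open Hausdorff set $V\subset G$ and a section $f\in C_c(V,\mathcal L)$, and to show that $f$ lies in $\sum_{U\in\mathcal U} C_c(U,\mathcal L)$. Since $\cCc(G,\mathcal L)$ is by definition the linear span of such $f$, this gives the result.

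The plan is a partition of unity argument carried out inside $V$. The set $K:=\supp f$, taken as the closure in $V$ of $\{f\neq 0\}$, is compact. The sets $U\cap V$, $U\in\mathcal U$, form an open cover of $V$. Choose finitely many $U_1,\dots,U_n\in\mathcal U$ whose intersections with $V$ cover $K$.

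Now use that $V$ is locally compact Hausdorff. Local compactness holds because $G$ is étale: $V$ is locally homeomorphic to open subsets of $G^0$ via bisections, and it is Hausdorff by assumption. So there are functions $\varphi_1,\dots,\varphi_n\in C_c(V)$, taking values in $[0,1]$, with $\supp\varphi_i\subset U_i\cap V$ and $\sum_i\varphi_i=1$ on $K$.

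Put $f_i:=\varphi_i f$, computed pointwise using the scalar multiplication on $\mathcal L$. Continuity of scalar multiplication shows that $f_i$ is a continuous section on $V$. Its support lies in $\supp\varphi_i$, which is a compact subset of $U_i\cap V$, and $f=\sum_i f_i$ on $V$.

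The one point needing care, and the main (mild) obstacle, is to check that $f_i$ belongs to $C_c(U_i,\mathcal L)$. Concretely, we must show that $f_i$ restricted to $U_i$ is continuous on $U_i$, and not merely on $U_i\cap V$, once it is extended by zero. I would argue as follows.
\begin{itemize}
\item Let $C:=\supp\varphi_i$. It is compact in $V$, hence compact in $G$, hence compact in $U_i$.
\item Since $U_i$ is Hausdorff, $C$ is closed in $U_i$.
\item On the open set $U_i\cap V$ the section $f_i$ is continuous.
\item On the open set $U_i\setminus C$ the section $f_i$ is identically $0$. It is continuous there because the zero section $g\mapsto 0_g$ is continuous: it equals $0$ times any local continuous section, and such local sections exist by Proposition \ref{prop:trivial}.
\item These two open sets cover $U_i$, so $f_i|_{U_i}$ is continuous with compact support $C$.
\end{itemize}

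Finally, both $f$ and $\sum_i f_i$ vanish outside $V$, so $f=\sum_i f_i$ as sections on $G$. This completes the argument.
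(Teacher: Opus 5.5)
Your proof is correct and is essentially the argument the paper intends: the paper only says the proof is analogous to \cite[Proposition 3.10]{Ex08}, and that proof is the same partition-of-unity argument inside the Hausdorff set $V$. Your explicit check that $\supp\varphi_i$ is compact, hence closed in the Hausdorff set $U_i$, so that the extension by zero of $\varphi_i f$ is continuous on $U_i$, is exactly the point that needs care; continuity of the zero section also follows directly from axiom (4) of a Fell line bundle.
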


The vector space $\cCc(G, \mathcal L)$ has the structure of a $*$-algebra with product given by $(\xi_1 \xi_2)(g):=\sum_{g_1g_2=g}\xi_1(g_1)\xi_2(g_2)$ and involution by $\xi_1^*(g)=\xi_1(g^{-1})^*$, for $\xi_1,\xi_2\in \cCc(G, \mathcal L)$ and $g\in G$. Via Proposition \ref{prop:trivial}, we identify the $*$-algebras $C_c(G^0,\cL)$ and $C_c(G^0)$ and, more generally, sections $G^0 \to \cL$ are identified with functions $G^0 \to \mathbb C$.

Given $*$-representations $\pi$ and $\sigma$ of $\cCc(G,\cL)$, we say that \emph{$\sigma$ weakly contains $\pi$}, written $\pi\prec\sigma$, if $\|\pi(f)\|\leq \|\sigma(f)\|$ for each $f\in \cCc(G,\cL)$. We denote by $\cs_{\pi}(G,\cL)$ the C*-algebra generated by $\pi(\cCc(G,\cL))$.

The \emph{full $\cs$-algebra} $\cs( G ,\cL)$ of $ (G,\cL) $ is the completion of $\cCc( G,\cL )$ under the $\cs$-norm given by $\sup\{\|a\| \mid \text{$\|\cdot\|$ is a $\cs$-seminorm on $\cCc(G,\cL)$}\},$ for $a\in \cCc( G,\cL )$ (by an argument similar to \cite[Proposition 3.14]{Ex08}, this norm is bounded). The $\cs$-identity implies that, for any bisection $U \subset G$ and any $a \in C_c(U,\mathcal L)$, the norm is given by $\|a\| = \sup_{g \in U}\|a(g)\|$.
Since for any $\xi \in \cCc(G,\mathcal L)$, there is $f \in C_c(G^0)$ with $f \xi = \xi f = \xi$, then any approximate unit for $C_c(G^0)$ is an approximate unit for $\cs(G,\mathcal L)$. 
 
For each $x \in G^0$, the \emph{left regular representation} $\lambda_x$ of $(G,\mathcal L)$ is defined on the Hilbert space $\ell^2(G_x,\mathcal L)$ of $\ell^2$-sections $G_x \to \mathcal L$, with $\lambda_x \colon \cCc(G,\mathcal L) \to \mathcal B(\ell^2(G_x,\mathcal L))$ given by $\lambda_x(a)(\xi) \colon g \mapsto \sum_{g_1 g_2 = g} a(g_1) \xi(g_2)$ for $a \in \cCc(G,\mathcal L)$, $\xi \in \ell^2(G_x,\mathcal L)$ and $g \in G_x$. The \emph{reduced $\cs$-algebra} $\cs_r(G,\mathcal L)$ of $(G,\cL)$ is the completion of $\cCc(G,\mathcal L)$ with respect to the norm $\|a\|_r := \sup_{x \in G^0} \|\lambda_x(a)\|$.

Given a locally compact Hausdorff space $X$, let $B_\infty(X)$ be the $\cs$-algebra of bounded Borel measurable functions on $X$, endowed with the sup norm.
Given $x\in G^0$ and $a\in \cCc(G,\cL)$, note that $\langle\lambda_{x}(a)\delta_x,\delta_x\rangle=a(x)$. Furthermore, the function $a|_{G^0}$ is Borel measurable. Therefore, there is a contractive positive linear map $E\colon \cs(G,\cL)\to B_{\infty}(G^0)$ such that $E(a)=a|_{G^0}$ for all $a\in \cCc(G,\cL)$.

Given a locally compact Hausdorff space $X$, let $M(X)$ be the ideal of $B_{\infty}(X)$ consisting of functions which vanish off a meagre set. 
Let $J:=\{a\in \cs(G,\mathcal L) \mid E(a^*a)\in M(G^0)\}$. Then $J$ is an ideal of $\cs(G,\mathcal L)$ (see \cite[Chapter 3]{ExelPitts22} and \cite[Section 4]{BKM}). 

The \emph{essential $\cs$-algebra} of $(G,\cL)$ is $\cs_{\ess}(G,\mathcal L):=\cs(G,\mathcal L)/J$. Given $a\in \cCc(G,\mathcal L)$, we let $[a]:=a+J\in \cs_{\ess}(G,\mathcal L)$. Note that $C_c(G^0)\cap J=\{0\}$. We also note that given an open subgroupoid $H \subset G$ which contains $G^0$, the inclusion $\cCc(H,\mathcal L|_H) \hookrightarrow \cCc(G,\mathcal L)$ descends to an inclusion $\cs_\ess(H,\mathcal L|_H) \hookrightarrow \cs_\ess(G,\mathcal L)$. 

\begin{remark}\label{countable remark}
It is sometimes convenient to assume that an \'etale groupoid $G$ can be covered by countably many bisections. The following idea outlined in \cite{Hum25} can often be used to reduce to this case. Given a countable set $\mathcal U$ of bisections, note that the groupoid algebraically generated by $\mathcal U$ is open and can be covered by countably many bisections. Hence $G$ can be written as a directed union $G=\bigcup_k H_k$ of open subgroupoids which contain $G^0$ and can be covered by countably many bisections. For any twist $\mathcal L$ over $G$ we can even arrange that $\cs_\ess(G,\mathcal L)= \bigcup_k \cs_\ess(H_k, \mathcal L|_{H_k})$, since for any $a \in \cs_\ess(G,\mathcal L)$ we may pick $a_n \in \cCc(G,\mathcal L)$ with $a_n \to a$ and then choose an open subgroupoid $H \subseteq G$ covered by countably many bisections such that $a_n \in \cCc(H,\mathcal L|_H)$ for each $n$, so that $a \in \cs_\ess(H,\mathcal L|_H)$.
\end{remark}

\section{Essentially free measures}

Let $X$ be a locally compact Hausdorff space. A positive Borel measure on $X$ is said to be \emph{Radon} if it is finite on compact spaces, inner regular on open sets and outer regular on Borel sets. We shall identify, via the Riesz representation theorem \cite[Theorem 7.2]{FollandRealAnalysis}, the set of positive Radon measures on $X$ with the set of positive linear functionals on $C_c(X)$. Note that if $X$ is second countable, every positive Borel measure that is finite on compact sets is Radon \cite[Theorem 7.8]{FollandRealAnalysis}.

A finite complex Borel measure on $X$ is said to be Radon if its variation is Radon in the previous sense. We shall identify, via another version of the Riesz representation theorem (\cite[Theorem 7.3.6]{Coh13}), the set of finite complex Radon measures on $X$ with the dual of the space $C_c(X)$ endowed with the sup norm.

By default, we assume measures on locally compact Hausdorff spaces to be Borel and positive (but shall sometimes emphasise positivity).

The \emph{Pedersen ideal} of a $\cs$-algebra $A$, denoted by $\Ped(A)$, is the smallest dense ideal of $A$ (see \cite[Section 5.6]{Ped18}). Given a finite set $Y\subset \Ped(A)$, the hereditary $\cs$-subalgebra generated by $Y$ is contained in $\Ped(A)$. For $A = C_0(X)$, the Pedersen ideal is $C_c(X)$.

In this paper, by a \emph{trace} $\tau$ on $A$ we shall mean a positive tracial linear functional on $\Ped(A)$. When $A$ is unital, $\Ped(A) = A$ and all traces $\tau$ are bounded with norm $\tau(1)$. \begin{definition}
Let $G$ be an \'etale groupoid. A Radon measure $\mu$ on $G^{0}$ is \emph{invariant} if $\mu(\source(U))=\mu(\range(U))$ for every bisection $U \subset G$. 
\end{definition}

\begin{proposition}
Let $(S,X)$ be a pseudogroup. A Radon measure $\mu$ on $X$ is $G(S,X)$-invariant if and only if $\mu(V)=\mu(t(V))$ for each $t\in S$ and measurable subset $V\subset \dom t$.
\end{proposition}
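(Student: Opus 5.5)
We prove both implications directly, using the fact that the sets $\Theta(t,A)$ with $t\in S$ and $A\subset\dom t$ open form a basis of bisections of $G(S,X)$. Recall that $\source(\Theta(t,A))=A$ and $\range(\Theta(t,A))=t(A)$.

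\emph{($\Leftarrow$)} Assume $\mu(V)=\mu(t(V))$ for all $t\in S$ and all measurable $V\subset\dom t$, and let $U\subset G(S,X)$ be a bisection.

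1. Write $U=\bigcup_i\Theta(t_i,A_i)$ as a union of basic bisections. The basic sets contained in $U$ cover $U$.
2. Since $\source|_U$ and $\range|_U$ are homeomorphisms onto their images, we have $\source(U)=\bigcup_i A_i$ and $\range(U)=\bigcup_i t_i(A_i)$. The map $\alpha:=\range\circ(\source|_U)^{-1}\colon\source(U)\to\range(U)$ is a homeomorphism with $\alpha|_{A_i}=t_i|_{A_i}$.
3. By inner regularity of $\mu$ on the open set $\range(U)$, it suffices to show $\mu(K)\le\mu(\source(U))$ for every compact $K\subset\range(U)$; the reverse inequality follows by symmetry, applying the argument to $U^{-1}$.
4. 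The set $\alpha^{-1}(K)$ is compact, so it is covered by finitely many $A_1,\dots,A_n$. Disjointify: set $B_k:=\alpha^{-1}(K)\cap A_k\setminus\bigcup_{j<k}A_j$, which are Borel sets.
5. Then $K=\bigsqcup_k t_k(B_k)$, because $\alpha$ is injective and $\alpha=t_k$ on $B_k$. Applying the hypothesis to each piece,
\[
\mu(K)=\sum_k\mu(t_k(B_k))=\sum_k\mu(B_k)=\mu(\alpha^{-1}(K))\le\mu(\source(U)).
\]
Hence $\mu(\range(U))=\mu(\source(U))$, so $\mu$ is invariant.

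\emph{($\Rightarrow$)} Assume $\mu$ is invariant and fix $t\in S$.

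1. For each open $A\subset\dom t$, the set $\Theta(t,A)$ is a bisection, so invariance gives $\mu(A)=\mu(t(A))$.
2. Consider the pushforward measure $\nu(V):=\mu(t(V))$ on Borel subsets $V\subset\dom t$. Since $t$ is a homeomorphism, $\nu$ is again a Radon measure on the open set $\dom t$. The restriction $\mu|_{\dom t}$ is also Radon, because open subsets inherit regularity.
3. By step 1, $\nu$ and $\mu|_{\dom t}$ agree on all open subsets of $\dom t$.
4. Outer regularity of both measures then gives $\nu(V)=\inf\{\nu(W)\mid W\supset V \text{ open}\}=\mu(V)$ for every Borel $V\subset\dom t$.

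The main obstacle is the regularity bookkeeping in both directions. In ($\Leftarrow$) it lies in reducing to compact subsets, since a bisection may need uncountably many basic pieces and so countable additivity cannot be applied to the whole cover. In ($\Rightarrow$) it lies in checking that the pushforward $\nu$ is genuinely Radon, so that outer regularity can be applied to it.
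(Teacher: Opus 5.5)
Your proof is correct and follows essentially the same route as the paper. The forward direction is the outer-regularity argument the paper only alludes to, which you spell out by checking that $\mu|_{\dom t}$ and its pushforward under $t^{-1}$ are Radon and agree on open sets. The converse is the paper's argument: use inner regularity to reduce to a compact set, cover it by finitely many basic bisections, disjointify, and apply the hypothesis piece by piece. The only cosmetic difference is that you pull a compact subset of $\range(U)$ back to $\source(U)$ directly, whereas the paper argues by contradiction with a compact $K \subset U$ covered by sets $\Theta_{t_i}$.
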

\begin{proof}
The forward implication follows easily from outer regularity, so let us show the converse. 

Suppose that there exists a bisection $U\subset G(S,X)$ such that $\mu(\source(U))\neq \mu(\range(U))$. By inner regularity, there exists a compact set $K\subset U$ such that $\mu(\source(K))\neq \mu(\range(K))$. 
Take $t_1,\dots,t_n\in S$ such that $K\subset \bigcup_{i=1}^n\Theta_{t_i}$. 
Set $W_1:=K\cap\Theta_{t_1}$ and, for $1<i\leq n$, 
\[W_i:=K\cap\Theta_{t_i}\setminus \bigsqcup_{j=1}^{i-1}W_j.\]
Then $K=\bigsqcup_{i=1}^n W_i$ and $\mu(\source(W_i))=\mu(\range(W_i))$ for $1\leq i\leq n$, contradicting the fact that $\mu(\source(K))\neq \mu(\range(K))$.
\end{proof}

We record the following connection between invariant Radon measures and positive traces on groupoid $\cs$-algebras (\cite[Lemmas 7.1, 7.2 and 7.4]{GM26}).

\begin{proposition}\label{prop:inv}
	Let $(G,\mathcal L)$ be a twisted étale groupoid. Then
    \[\cCc(G,\mathcal L) \subset  \Ped(\cs(G,\mathcal L))\]    
and traces on $\cs(G,\mathcal L)$ are determined by their values on $\cCc(G,\mathcal L)$. Moreover, for each trace $\tau$ the restriction $\tau|_{C_c(G^0)} \colon C_c(G^0) \to \mathbb C$ is represented by an invariant Radon measure on $G^0$. 
\end{proposition}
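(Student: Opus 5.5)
Three things need proving: that $\cCc(G,\mathcal L)$ lies in $\Ped(\cs(G,\mathcal L))$, that traces are determined by their values on $\cCc(G,\mathcal L)$, and that the restriction of a trace to $C_c(G^0)$ is given by an invariant Radon measure.

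\emph{Step 1: $\cCc(G,\mathcal L)\subset\Ped(\cs(G,\mathcal L))$.} By Proposition \ref{prop:covera} it suffices to treat $a\in C_c(U,\mathcal L)$ with $U$ a bisection, since $\Ped$ is a linear subspace. Fix such an $a$ and choose $f\in C_c(G^0)$ with $0\le f\le 1$ and $f=1$ on $\range(\supp a)$. Then $fa=a$. Next, $f$ lies in $\Ped(C_0(G^0))=C_c(G^0)$, and $f\ge 0$. Since $f$ is the square of a function of compact support, and the Pedersen ideal of the subalgebra contains the elements $g$ with $g=hg$ for some $h\in C_c$, we get $f\in\Ped(\cs(G,\mathcal L))$. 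More directly, $f=\phi(e)$ for some $e\in C_c(G^0)_+$ and $\phi$ continuous and vanishing near $0$, and such elements lie in the Pedersen ideal of any ambient algebra. As $\Ped$ is an ideal, $a=fa\in\Ped(\cs(G,\mathcal L))$.

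\emph{Step 2: traces are determined on $\cCc(G,\mathcal L)$.} The plan is to show that $\cCc(G,\mathcal L)$ is "large" inside $\Ped$, via a positive approximate unit $(f_i)\subset C_c(G^0)$ for $\cs(G,\mathcal L)$. Let $\tau$ be a trace and $b\in\Ped$. Since the trace is positive and tracial on $\Ped$, I expect the identity $\tau(b)=\lim\tau(f_i b f_i)$ for $b\ge0$, using lower semicontinuity or Cauchy--Schwarz-type estimates on the hereditary subalgebra generated by $b$. Each $f_i b f_i$ lies in the hereditary subalgebra $f_i\cs f_i$. On that subalgebra $\tau$ is bounded, because the hereditary algebra generated by a fixed element of $\Ped$ sits inside $\Ped$ and positive functionals on a $\cs$-algebra are bounded. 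We can approximate $b$ in norm by elements $f_i a f_i$ with $a\in\cCc$, so continuity of $\tau$ on that hereditary algebra gives $\tau(f_i b f_i)=\lim\tau(f_i a_n f_i)$, with $f_i a_n f_i\in\cCc$. The main obstacle is the exchange of limits, i.e.\ justifying $\tau(b)=\lim_i\tau(f_ibf_i)$ for unbounded $\tau$. I would first try to reduce to $b=x^*x$ with $x\in\Ped$. Writing $b$ inside the hereditary subalgebra of a fixed $e\in\Ped_+$ on which $\tau$ is bounded, one gets $\tau(f_ibf_i)=\tau(b^{1/2}f_i^2b^{1/2})\to\tau(b)$ by traciality and norm continuity there.

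\emph{Step 3: invariance.} Positivity of $\tau$ on $C_c(G^0)\subset\Ped$ gives a Radon measure $\mu$ by Riesz. For invariance, take a bisection $U$ and $a\in C_c(U,\mathcal L)$. Then $a^*a$ and $aa^*$ are the functions $\|a\|^2\circ(\source|_U)^{-1}$ and $\|a\|^2\circ(\range|_U)^{-1}$ on $G^0$, so $\tau(a^*a)=\tau(aa^*)$ yields $\int h\circ(\source|_U)^{-1}\,d\mu=\int h\circ(\range|_U)^{-1}\,d\mu$ for all $h\in C_c(U)_+$, using trivialisability to realise each such $h$ as $\|a\|^2$. Taking the supremum over $h\le 1_U$ and using inner regularity on the open sets $\source(U)$ and $\range(U)$ gives $\mu(\source(U))=\mu(\range(U))$.
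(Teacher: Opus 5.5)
Your proposal is correct. Note that the paper gives no argument of its own here; it cites \cite{GM26}.

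\textbf{Step 2, compared with the paper's mechanism.} The mechanism the paper uses elsewhere for ``traces are determined on $\cCc(G,\mathcal L)$'' is Lemma \ref{lem:groupoid ped}. Each $a\in\Ped(\cs(G,\mathcal L))_+$ is written as $\sum_i c_if_if_i^*c_i^*$ with $f_i\in C_c(G^0)$, so that $\tau(a)=\tau(fcf)$ for some $f\in C_c(G^0)_+$. One then gets $\tau(fcf)=\lim_n\tau(fc_nf)$ with $c_n\in\cCc(G,\mathcal L)$, by boundedness of $\tau$ on $\overline{f\cs(G,\mathcal L)f}$.

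You avoid that structural description of the Pedersen ideal. For $b\in\Ped_+$ you use traciality with $y=b^{1/2}f_i\in\Ped$ to get $\tau(f_ibf_i)=\tau(b^{1/2}f_i^2b^{1/2})$. This tends to $\tau(b)$ because everything lies in $\overline{b\,\cs(G,\mathcal L)\,b}\subset\Ped$, where $\tau$ is bounded. So there is in fact no exchange of limits to justify: you have two separate limits, one in $i$ and one in $n$.

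Your route is more self-contained. It needs only an approximate unit from $C_c(G^0)$ and the hereditary-subalgebra fact recorded in the preliminaries. The decomposition route additionally gives the ideal-membership statement of Lemma \ref{lem:groupoid ped}, which the paper needs later for descending traces to $\cs_\ess(G,\mathcal L)$.

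\textbf{Step 3.} A general bisection $U$ need not be trivialisable, so ``using trivialisability'' does not by itself realise an arbitrary $h\in C_c(U)_+$ as $\|a\|^2$. You should split $h=\sum_j h_j$ by a partition of unity subordinate to finitely many trivialisable open sets $V_j\subset U$. Then write $h_j=\|a_j\|^2$ with $a_j=h_j^{1/2}s_j$, where $s_j$ is a unit-norm section over $V_j$, and conclude by linearity. The inner-regularity step after that is fine.

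\textbf{Step 1.} Your first justification is muddled, but the second one is correct. For instance, take $\psi\in C_c(G^0)$ with $0\le\psi\le 1$ and $\psi=1$ on $\supp f$. Set $e=(\psi+f)/2$ and $\phi(t)=\max(2t-1,0)$. Then $\phi(e)=f$ and $\phi$ vanishes near $0$, so $f\in\Ped(\cs(G,\mathcal L))$ and hence $a=fa\in\Ped$.
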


Given a twisted étale groupoid $(G,\mathcal L)$ and an invariant Radon measure $\mu$ on $G^0$, we shall say that a trace $\sigma$ on $\cs(G,\cL)$ or on $\cs_\ess(G,\cL)$ is an \emph{extension} of $\mu$ if $\sigma|_{C_c(G^0)}$ is represented by $\mu$. For every invariant Radon measure $\mu$ on $G^0$, there is a (unique) trace $\tau_\mu$ on $\cs(G,\cL)$ such that $\tau_\mu(a)=\int a|_{G^0}\, \mathrm d\mu$ for each $a\in \cCc(G,\cL)$ \cite[Corollary 7.5]{GM26}. We shall call $\tau_\mu$ the \emph{canonical extension} of $\mu$.
\begin{remark}
    Given a $\cs$-algebra A and a trace $\tau$ on $A$ for which there exists an approximate unit $(e_i)\subset \Ped(A)$ such that $\sup\tau(e_i)<\infty$, we have that $\tau$ is bounded. Indeed, for $a \in \Ped(A)_+$, we have 
    \[\tau(a) = \lim_i \tau(a^{1/2} e_i a^{1/2}) = \lim_i \tau(e_i^{1/2} a e_i^{1/2}) \leq \sup_i \|a\| \tau(e_i),\] where the first equality follows from the fact that $a^{1/2}e_ia^{1/2}\in\overline{aAa}\subset\Ped(A)$. 
    
    Hence, given a twisted étale groupoid $(G,\cL)$ and $\sigma$ a trace on $\cs(G,\cL)$ or on $\cs_\ess(G,\cL)$ that extends an invariant Radon measure $\mu$, we have $\|\sigma\| = \mu(G^0) \in [0,\infty]$.
\end{remark}

Let $\mu$ be a measure on a locally compact Hausdorff set $X$. Recall that $\mu$ is said to be \emph{semifinite} if each Borel set with infinite measure contains a Borel set with finite nonzero measure; this is automatic if $\mu$ is $\sigma$-finite.
The \emph{semifinite part} $\mu_{\mathrm{sf}}$ of $\mu$ is a semifinite measure on $X$ given by 
\[\mu_{\mathrm{sf}}(A) = \sup \{ \mu(B) \mid B \subset A \text{ Borel, } \mu(B) < \infty \} \qquad \text{for $A \subset X$ Borel.} \]
It follows easily from \cite[Exercise 1.3.14]{FollandRealAnalysis} and \cite[Proposition 7.2.6]{Coh13} that the semifinite part $\mu_{\mathrm{sf}}$ of a Radon measure $\mu$ is inner regular on all Borel sets, but it may fail to be outer regular. 
Of course, $\mu_{\mathrm{sf}} = \mu$ if and only if $\mu$ is semifinite, thus the reader interested only in semifinite measures may ignore the subscripts. 
\begin{definition}\label{def:ess free}
Let $G$ be an étale groupoid and $\mu$ a Radon measure on $G^{0}$. We say that $G$ is \emph{essentially free with respect to $\mu$} if for each bisection $U \subset G$ we have 
\begin{equation*}
\mu_{\mathrm{sf}}(\source(U\cap\Iso\setminus G^{0}))=0.
\end{equation*}
When we want to emphasise the measure $\mu$, we say that $\mu$ is \emph{essentially free} (with respect to $G$).
\end{definition}

The idea of the notion of essential freeness is as follows: the bisection $U$ induces a homeomorphism $\theta_U=\range\circ (\source|_U)^{-1}\colon \source(U)\to \range(U)$. Note that $\Fix_{\theta_U}=\source(\Iso\cap U)=\range(\Iso\cap U)$. Obviously, each point in $U\cap G^0$ is a fixed point for $\theta_U$. Essential freeness requires that the set of points of $\source(U)$ which are not ``obviously'' fixed by $\theta_U$ has measure $0$. We work with the semifinite part of the measure in order to obtain the characterization of uniqueness of extensions that we are aiming for.

\begin{proposition}\label{prop:cover}
Let $G$ be an étale groupoid, $\mathcal{F}$ a cover of $G$ by bisections and $\mu$ a Radon measure on $G^0$. Then $\mu$ is essentially free if and only if $\mu_{\mathrm{sf}}(\source(U\cap\Iso\setminus G^{0}))=0$ for each $U\in\mathcal{F}$.
\end{proposition}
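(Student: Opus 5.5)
The forward implication is immediate from Definition \ref{def:ess free}, since every $U\in\mathcal F$ is a bisection. For the converse, fix an arbitrary bisection $U\subset G$ and set $B:=\source(U\cap\Iso\setminus G^0)$. The plan is to cover $B$ by sets of $\mu_{\mathrm{sf}}$-measure zero coming from members of $\mathcal F$. Since $\mathcal F$ may be uncountable, we use inner regularity of $\mu_{\mathrm{sf}}$ to reduce to compact sets, which only meet finitely many members of $\mathcal F$.

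First note that $B$ is Borel. Indeed, $\Iso$ is closed, $G^0$ is open, and $\source|_U$ is a homeomorphism onto the open set $\source(U)$. Hence $B$ is a relatively closed subset of $\source(U)$.

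Since $\mu_{\mathrm{sf}}$ is inner regular on Borel sets, it suffices to show that $\mu_{\mathrm{sf}}(K)=0$ for every compact $K\subset B$. Given such $K$, the set $C:=(\source|_U)^{-1}(K)$ is a compact subset of $U\cap\Iso\setminus G^0$. Choose $V_1,\dots,V_n\in\mathcal F$ covering $C$.

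For each $i$ put $C_i:=C\cap V_i$. Then $C_i\subset V_i\cap\Iso\setminus G^0$, so $\source(C_i)\subset\source(V_i\cap\Iso\setminus G^0)$, and the latter set has $\mu_{\mathrm{sf}}$-measure zero by hypothesis. Each $\source(C_i)$ is Borel, being the image under the homeomorphism $\source|_U$ of a relatively open subset of $C$. Since $K=\bigcup_i\source(C_i)$, monotonicity and finite subadditivity give $\mu_{\mathrm{sf}}(K)=0$. Hence $\mu_{\mathrm{sf}}(B)=0$, and $\mu$ is essentially free.

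The main point requiring care is the reduction from an arbitrary cover to a finite one. We cannot use countable additivity over $\mathcal F$ directly. Instead we need:
\begin{itemize}
\item inner regularity of $\mu_{\mathrm{sf}}$ on all Borel sets, which is noted above; and
\item measurability of the sets involved, which follows because $\source$ restricted to the single bisection $U$ is a homeomorphism.
\end{itemize}
The sets $\source(V_i\cap\Iso\setminus G^0)$ need not coincide with pieces of $B$. Only the inclusion $\source(C_i)\subset \source(V_i\cap\Iso\setminus G^0)$ is used, so this causes no problem.
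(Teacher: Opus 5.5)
Your proof is correct and is essentially the paper's argument: inner regularity of $\mu_{\mathrm{sf}}$ reduces the claim to a compact $K$, compactness of $(\source|_U)^{-1}(K)$ gives finitely many members of $\mathcal F$ covering it, and finite subadditivity finishes. The paper argues contrapositively, taking a compact $K$ with $\mu(K)>0$, and uses the inclusion $K\subset\bigcup_i\source(U_i\cap\Iso\setminus G^0)$ directly, so it does not need your separate check that each $\source(C_i)$ is Borel.
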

\begin{proof}
The forward implication is obvious, so let us show the converse. Suppose that there exists a bisection $V\subset G$ with $\mu_{\mathrm{sf}}(\source(V\cap\Iso\setminus G^{0}))>0$; we will show that there is $U\in\mathcal{F}$ such that $\mu_{\mathrm{sf}}(\source(U\cap\Iso\setminus G^{0}))>0$. 

Choose $K\subset \source(V\cap\Iso\setminus G^{0})$ compact such that $\mu(K)>0$. Take $U_1,\dots, U_n\in\mathcal{F}$ such that $(\source|_V)^{-1}(K)\subset \bigcup_{i=1}^nU_i$. Since $K\subset \bigcup_{i=1}^n \source(U_i\cap\Iso\setminus G^{0})$, the conclusion follows.
\end{proof}

The next result is an immediate consequence of Lemma \ref{eq:fixb} and Proposition \ref{prop:cover}.
\begin{proposition}\label{prop:pg} Let $(S,X)$ be a pseudogroup and $\mu$ a Radon measure on $X$. Then $G(S,X)$ is essentially free with respect to $\mu$ if and only if $\mu_{\mathrm{sf}}(\Fix_s\setminus\interior\Fix_s)=0$ for all $s\in S$.
\end{proposition}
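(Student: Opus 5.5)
The plan is to apply Proposition \ref{prop:cover} to the cover $\mathcal F := \{\Theta_s \mid s \in S\}$ of $G(S,X)$, and then identify the relevant sets using Lemma \ref{eq:fixb}. First I check that $\mathcal F$ is a cover of $G(S,X)$ by bisections. Every germ $[s,x]$ lies in $\Theta_s$, and each $\Theta_s = \Theta(s,\dom s)$ is a bisection by the discussion preceding Lemma \ref{eq:fixb}.

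Proposition \ref{prop:cover} then says that $G(S,X)$ is essentially free with respect to $\mu$ if and only if
\[
\mu_{\mathrm{sf}}\bigl(\source(\Theta_s\cap \mathrm{Iso}(G(S,X))\setminus X)\bigr)=0 \quad \text{for all } s\in S.
\]
Here the unit space $G(S,X)^0$ is identified with $X$. By the last assertion of Lemma \ref{eq:fixb}, the set inside $\mu_{\mathrm{sf}}$ equals $\Fix_s\setminus\interior\Fix_s$. Substituting this gives exactly the condition in the statement.

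Before substituting, I make sure the sets are Borel, so that $\mu_{\mathrm{sf}}$ is defined on them. The set $\Fix_s$ is closed in the open set $\dom s$, since it is where the continuous maps $s$ and $\mathrm{id}$ agree and $X$ is Hausdorff. Hence $\Fix_s$ is Borel. The set $\interior\Fix_s$ is open, so the difference $\Fix_s\setminus\interior\Fix_s$ is Borel.

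There is no genuine obstacle. The only point needing care is that the source set appearing in Definition \ref{def:ess free} is literally the set computed in Lemma \ref{eq:fixb}, which is what the lemma states.
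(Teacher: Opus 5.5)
Your proposal is correct and is exactly the paper's argument: apply Proposition \ref{prop:cover} to the cover $\{\Theta_s \mid s\in S\}$ of $G(S,X)$ by bisections and identify $\source(\Theta_s\cap \mathrm{Iso}(G(S,X))\setminus X)$ with $\Fix_s\setminus\interior\Fix_s$ via Lemma \ref{eq:fixb}. Your additional check that $\Fix_s\setminus\interior\Fix_s$ is Borel is a correct detail that the paper leaves implicit.
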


\begin{remark}
	If $G$ is a Hausdorff étale groupoid, then a Radon measure $\mu$ on $G^0$ is essentially free if and only if $\mu_{\mathrm{sf}}(\source(U\cap \Iso))=0$ for any bisection $U\subset  G\setminus G^0$.
\end{remark}

The description $\source(\Iso\setminus G^0)=\{x\in G^0 \mid G_x^x\neq\{x\}\}$ yields the following immediate consequence.

\begin{proposition}\label{ess free measure condition}
Let $G$ be an étale groupoid which can be covered by countably many bisections. Then $G$ is essentially free with respect to a Radon measure $\mu$ on $G^0$ if and only if $\mu_{\mathrm{sf}}(\{x\in G^0 \mid G_x^x\neq\{x\}\})=0$. 
\end{proposition}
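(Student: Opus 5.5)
Fix a countable cover $\mathcal F=\{U_n\}_{n\in\N}$ of $G$ by bisections. By Proposition \ref{prop:cover}, $G$ is essentially free with respect to $\mu$ if and only if $\mu_{\mathrm{sf}}(\source(U_n\cap\Iso\setminus G^0))=0$ for every $n$. The plan is to show that the sets $\source(U_n\cap \Iso\setminus G^0)$ cover exactly $Z:=\{x\in G^0\mid G^x_x\neq\{x\}\}$. Then both directions reduce to elementary facts about $\mu_{\mathrm{sf}}$.

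First we establish $Z=\bigcup_n \source(U_n\cap\Iso\setminus G^0)$. This is the description of $\source(\Iso\setminus G^0)$ mentioned before the statement. For $\supseteq$: if $g\in U_n\cap\Iso\setminus G^0$ and $x=\source(g)$, then $g\in G^x_x$ and $g\neq x$, so $x\in Z$. For $\subseteq$: if $x\in Z$, pick $g\in G^x_x$ with $g\ne x$. Then $g\in \Iso\setminus G^0$, and $g\in U_n$ for some $n$ since $\mathcal F$ covers $G$, so $x=\source(g)\in\source(U_n\cap\Iso\setminus G^0)$. We also need these sets to be Borel, so that $\mu_{\mathrm{sf}}$ applies. $\Iso$ is closed and $G^0$ is open, so $U_n\cap\Iso\setminus G^0$ is relatively closed in the bisection $U_n$. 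Hence its image under the homeomorphism $\source|_{U_n}$ is relatively closed in the open set $\source(U_n)$, and is therefore Borel. Consequently $Z$ is Borel as a countable union.

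Now the two directions. If $\mu_{\mathrm{sf}}(Z)=0$, then each $\source(U_n\cap\Iso\setminus G^0)\subset Z$ has $\mu_{\mathrm{sf}}$-measure zero by monotonicity, and Proposition \ref{prop:cover} gives essential freeness. Conversely, suppose $G$ is essentially free. Then each set in the union has $\mu_{\mathrm{sf}}$-measure zero, and countable subadditivity of the measure $\mu_{\mathrm{sf}}$ gives $\mu_{\mathrm{sf}}(Z)=0$.

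The only points needing care are that $\mu_{\mathrm{sf}}$ is a genuine countably additive Borel measure, which is standard (Folland's exercise cited earlier), and the Borel measurability check above. Neither presents a real obstacle. The countability hypothesis is used precisely in the subadditivity step.
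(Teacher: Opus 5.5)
Your proof is correct and is essentially the paper's argument. The paper treats the statement as immediate from the identity $\source(\Iso\setminus G^0)=\{x\in G^0 \mid G_x^x\neq\{x\}\}$ together with a countable cover by bisections, Proposition \ref{prop:cover}, and countable subadditivity of $\mu_{\mathrm{sf}}$; you have written out those details, including the check that the sets involved are Borel.
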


The following example illustrates the pathologies that can happen when the measure is not semifinite.
\begin{example}\label{non semifinite example}
Let $X = \mathbb R \times \mathbb R_{\mathrm d}$, where $\mathbb R_{\mathrm d}$ is the real numbers taken with the discrete topology, and let $\mu$ be a Haar measure on $X$. 

Consider the homeomorphism $s\colon (x,y)\mapsto (-x,y)$ on $X$. It is not difficult to see that $\mu$ is $s$-invariant. Furthermore, $\Fix_s = \{0\} \times \mathbb R_{\mathrm d}$ and, as observed in \cite[Section 2.3]{FollandAbstractHarmonicAnalysis}, outer regularity of $\mu$ implies that $\mu(\{0\} \times \mathbb R_{\mathrm d})=+\infty$. On the other hand, for any compact set $K\subset \{0\}\times \R_d$, we have $\mu(K)=0$. Thus, the transformation groupoid $\Z_2\ltimes X$ is essentially free with respect to $\mu$.
\end{example}

\begin{remark}\label{rmk:simple}
Suppose $G$ is essentially free with respect to a Radon measure $\mu$ on $G^0$ with full support. Then $G$ is topologically free in the sense that $\interior(\Iso\setminus G^0)=\emptyset$. Indeed, suppose that there exists a nonempty bisection $U \subset  \Iso\setminus G^0$. We may assume that $\source(U)$ has finite measure. Since $\mu$ has full support, we have $\mu(\source(U))>0$, which contradicts essential freeness of $\mu$. 
Under the additional assumption that $G$ is minimal (which implies that any invariant Radon measure has full support), the essential $\cs$-algebra $\cs_{\ess}(G,\mathcal L)$ is simple for any twist $\mathcal L$ over $G$ (see \cite[Theorem 7.26]{KM21}).
\end{remark}

\section{Traces on full groupoid $\cs$-algebras}\label{section:full}

Given a measurable space $X$, $\mu$ a positive measure on $X$ and $\nu$ a finite complex measure on $X$, recall that $\nu$ is said to be \emph{absolutely continuous} with respect to $\mu$, denoted by $\nu\ll\mu$, if each measurable set $A\subset  X$ that satisfies $\mu(A)=0$ also satisfies $\nu(A)=0$. 
A version of the next result was used in \cite{KTT} without proof. We provide a proof here for completeness. 
\begin{lemma}\label{lem:tec}
Let $X$ be a locally compact Hausdorff space, $\mu$ a positive Radon measure on $X$, and $\nu$ a finite complex Radon measure on $X$ such that all $f\in C_c(X)$ nonnegative satisfy $|\int f \, \mathrm d \nu|\leq (\int f^2 \, \mathrm d \mu)^{1/2}$. Then $\nu\ll\mu$.
\end{lemma}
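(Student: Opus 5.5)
The plan is to show that every Borel set $A$ with $\mu(A)=0$ has $|\nu|(A)=0$. By the Radon property of $\nu$, it suffices to take $A$ compact. Let $\nu = h\,\mathrm d|\nu|$ be the polar decomposition, with $|h|=1$. Since $|\nu|$ is inner regular on Borel sets of finite measure (it is finite), $|\nu|(A)=0$ for all $\mu$-null Borel $A$ follows once $|\nu|(K)=0$ for all compact $\mu$-null $K$. Indeed, if $\mu(A)=0$ and $K\subset A$ is compact, then $\mu(K)=0$, and $|\nu|(A)=\sup_K|\nu|(K)$. So fix a compact $K$ with $\mu(K)=0$.

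The first step is to show that $\nu$ itself vanishes on Borel subsets of $K$. By outer regularity of $\mu$, for each $\varepsilon>0$ there is an open $W\supset K$ with $\mu(W)<\varepsilon$. By Urysohn's lemma there is $f\in C_c(X)$ with $0\le f\le 1$, $f=1$ on $K$ and $\supp f\subset W$. The hypothesis gives $|\int f\,\mathrm d\nu|\le \mu(W)^{1/2}<\varepsilon^{1/2}$. Now take a decreasing net of such $f$, with supports shrinking into open sets $W$ that also approximate $K$ from outside for $|\nu|$, using outer regularity of $|\nu|$. Then $f\to 1_K$ in $L^1(|\nu|)$, so $\int f\,\mathrm d\nu\to\nu(K)$. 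This gives $\nu(K)=0$. The same argument applies to every compact subset of $K$, so $\nu(K')=0$ for all compact $K'\subset K$. By inner regularity of $|\nu|$, applied to the real and imaginary parts and their positive and negative parts (which are all dominated by $|\nu|$), $\nu(B)=0$ for every Borel $B\subset K$.

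The second step passes from $\nu$ to $|\nu|$. Since $\nu(B)=0$ for all Borel $B\subset K$, the measure $h\,|\nu|$ restricted to $K$ is zero. Hence $h=0$ $|\nu|$-a.e. on $K$, and since $|h|=1$ this forces $|\nu|(K)=0$. Equivalently, $|\nu|(K)=\sup\sum_i|\nu(B_i)|$ over Borel partitions of $K$, and each term vanishes.

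The main obstacle is the approximation in the first step. It needs to be done carefully: choose a single open $W$ with both $\mu(W)<\varepsilon$ and $|\nu|(W\setminus K)<\varepsilon$, by intersecting the two outer-regular neighbourhoods. Then $|\int f\,\mathrm d\nu-\nu(K)|\le|\nu|(W\setminus K)<\varepsilon$, so $|\nu(K)|<\varepsilon+\varepsilon^{1/2}$. The only point to check is that this requires just nonnegative $f$, which is exactly what the hypothesis provides.
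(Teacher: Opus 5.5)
Your proof is correct and is essentially the paper's argument: you reduce by inner regularity to a compact $\mu$-null set $K$, then test the hypothesis against an Urysohn function supported in an open $W\supset K$ that is small for $\mu$ and approximates $K$ for the variation of $\nu$, which yields $|\nu(K)|<\varepsilon+\varepsilon^{1/2}$. The differences are cosmetic: you use $|\nu|$ directly where the paper first reduces to signed $\nu$ and uses $\nu^{\pm}$, and your passage through $|\nu|(K)=0$ is an unnecessary detour, since $\nu(A)=0$ for $\mu$-null Borel $A$ already follows from $\nu(K)=0$ on compact $\mu$-null $K$ together with inner regularity of $|\nu|$.
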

\begin{proof}
By considering real and imaginary parts, it suffices to consider the case in which $\nu$ is signed. 

Suppose $A \subset  X$ is Borel with $\mu(A) = 0$ but $\nu(A) \ne 0$.
By \cite[Proposition 7.3.9]{Coh13}, there is $K\subset A$ compact such that $\mu(K)=0$ and $\nu(K)\neq 0$. 

Fix $\epsilon>0$. Choose $U\supset K$ open such that $\mu(U)<\epsilon$, $\nu^+(U)<\nu^+(K)+\epsilon$ and $\nu^-(U)<\nu^-(K)+\epsilon$.
Pick $f\in C_c(U)$ with $0 \leq f \leq 1$ and $f = 1$ on $K$. Then
\begin{align*}
|\nu(K)|&=|\nu^+(K)-\nu^-(K)|\\
&\leq\left|\nu^+(K)-\int f \, \mathrm d \nu^+ \right|+\left|\int f \, \mathrm d \nu^+-\int f \, \mathrm d \nu^-\right|+\left|\int f \, \mathrm d \nu^- -\nu^-(K)\right|\\
&< \epsilon+\left(\int f^2 \, \mathrm d \mu\right)^{1/2} +\epsilon <2\epsilon+\epsilon^{1/2}.
\end{align*}
Thus, $\nu\ll\mu$.
\end{proof}

\begin{lemma}\label{lem:fiso}
Let $(G,\mathcal L)$ be a twisted étale groupoid, $\tau$ a trace on $\cs(G,\mathcal L)$ and $U\subset G$ a bisection with $U\cap \Iso=\emptyset$. Then for any $f\in C_c(U,\mathcal L)$, we have $\tau(f)=0$.
\end{lemma}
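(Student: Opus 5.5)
The idea is to write $f$ as a finite sum of commutators $[a,b]=ab-ba$ with $a,b$ in $\cCc(G,\mathcal L)\subset \Ped(\cs(G,\mathcal L))$ (Proposition~\ref{prop:inv}). Since $\tau$ is tracial, it then vanishes on $f$. The geometric input is that on a bisection disjoint from $\Iso$ the partial homeomorphism $\theta_U=\range\circ(\source|_U)^{-1}$ has no fixed points. Hence every point of $\source(U)$ has a neighbourhood that $\theta_U$ moves off itself.

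\emph{Step 1: localise.} Let $K=\supp f\subset U$, which is compact. For each $g\in K$, put $x=\source(g)$. Then $\range(g)\neq x$, because $g\notin\Iso$. By Hausdorffness of $G^0$ and continuity, choose an open bisection $V_g\subset U$ containing $g$ with $\source(V_g)\cap\range(V_g)=\emptyset$. Cover $K$ by finitely many such $V_{g_1},\dots,V_{g_n}$. Take a partition of unity $\phi_i\in C_c(\source(V_{g_i}))$ with $\sum_i\phi_i=1$ on $\source(K)$. Then $f=\sum_i f\phi_i$, where $f\phi_i$ is the convolution product with $\phi_i\in C_c(G^0)$, and $f\phi_i\in C_c(V_{g_i},\mathcal L)$. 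It therefore suffices to treat $f$ supported in a bisection $V$ with $\source(V)\cap\range(V)=\emptyset$.

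\emph{Step 2: a commutator.} Pick $h\in C_c(\source(V))\subset C_c(G^0)$ with $h=1$ on $\source(\supp f)$, so that $f=fh$ (convolution). Then $hf$ is supported on the elements $g\in V$ with $\range(g)\in\supp h\subset\source(V)$. Since $\range(g)\in\range(V)$ and $\source(V)\cap\range(V)=\emptyset$, this set is empty, so $hf=0$. Hence
\[
f = fh - hf,
\]
and therefore $\tau(f)=\tau(fh)-\tau(hf)=0$ by traciality. Both $f$ and $h$ lie in $\cCc(G,\mathcal L)\subset\Ped(\cs(G,\mathcal L))$, so traciality applies. If the paper's notion of trace requires $\tau(ab)=\tau(ba)$ only for $a\in \Ped$ and $b\in A$, this is still satisfied. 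If it is only stated as $\tau(a^*a)=\tau(aa^*)$, I would polarise to obtain $\tau(ab)=\tau(ba)$ for $a,b\in\Ped$.

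\emph{Main obstacle.} I expect the only delicate point to be the choice of $V_g$ in the non-Hausdorff setting, so that $f\phi_i$ remains in $C_c(V_{g_i},\mathcal L)$. This works because $U$ is itself Hausdorff and multiplying by a function in $C_c(G^0)$ just multiplies pointwise by $\phi_i\circ\source$. The traciality convention for commutators with Pedersen-ideal elements is the other point to verify.
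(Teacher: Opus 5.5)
Your proof is correct and follows the paper's own argument. Like the paper, you use a partition of unity to reduce to a bisection $V$ with $\source(V)\cap\range(V)=\emptyset$, then choose $h\in C_c(G^0)$ with $fh=f$ and $hf=0$ and apply traciality. Your checks that $f\phi_i\in C_c(V_{g_i},\mathcal L)$ and that polarisation gives $\tau(ab)=\tau(ba)$ on $\Ped$ fill in details the paper leaves implicit.
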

\begin{proof}
Since we can cover $\supp f$ with bisections $V_1,\dots, V_n$ such that, for all $i$, $\source(V_i)\cap \range(V_i)=\emptyset$, by a standard partition of unity argument we can assume that $\source(U)\cap \range(U)=\emptyset$.

Take $h\in C_c(G^{0})$ which is $1$ on $\source(\supp f)$ and $0$ on $\range(\supp f)$. Then $fh=f$ and $hf=0$, hence $\tau(f)=\tau(fh)=\tau(hf)=0$.
\end{proof}

In the proof of the next result, we follow an idea from \cite{KTT}.
\begin{theorem}\label{thm:u} 
Let $(G,\mathcal L)$ be a twisted étale groupoid and let $\mu$ be an essentially free invariant Radon measure on $G^0$. Then the only extension of $\mu$ to a trace on $\cs(G,\mathcal L)$ is the canonical one.
\end{theorem}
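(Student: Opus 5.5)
Let $\tau$ be a trace on $\cs(G,\mathcal L)$ extending $\mu$. By Proposition \ref{prop:inv}, $\tau$ is determined by its values on $\cCc(G,\mathcal L)$. By Proposition \ref{prop:covera} together with a partition of unity, it suffices to show $\tau(f)=\int f|_{G^0}\,\mathrm d\mu$ for every $f\in C_c(U,\mathcal L)$ with $U$ a trivialisable bisection (Proposition \ref{prop:trivial}). Identifying $f$ via the trivialisation with a function $\tilde f\in C_c(U)$, the plan is to express $\tau(f)$ as integration of a complex Radon measure on $\source(U)$ against $\tilde f$. We then show this measure lives on the fixed-point set $\source(U\cap\Iso)$, and that essential freeness forces it to live on $\source(U\cap G^0)$, where it agrees with $\mu$.

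\emph{Step 1 (Radon measure and absolute continuity).} For $h\in C_c(\source(U))$, define $\varphi(h):=\tau(f_0 h)$, where $f_0$ is a fixed section of norm $1$ on $U$ (suitably cut off). Then $|\varphi(h)|\le \tau((f_0h)^*(f_0h))^{1/2}\,\tau(e)^{1/2}$ by Cauchy--Schwarz, for a suitable compactly supported $e\in C_c(G^0)$ equal to $1$ near the relevant compact set. Also $(f_0h)^*(f_0h)=|h|^2\in C_c(G^0)$, so $\tau$ of it equals $\int|h|^2\,\mathrm d\mu$. Thus $\varphi$ is sup-norm bounded on functions supported in a fixed compact set, giving a finite complex Radon measure $\nu$ there. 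Lemma \ref{lem:tec}, applied after normalising by $\tau(e)^{1/2}$, gives $\nu\ll\mu$.

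\emph{Step 2 (support of $\nu$).} Lemma \ref{lem:fiso} shows $\tau$ kills sections supported in $U\setminus\Iso$, which is open since $\Iso$ is closed. Hence $\nu$ vanishes on the open set $\source(U)\setminus\source(U\cap\Iso)$, so $\nu$ is concentrated on $\source(U\cap \Iso)$. On the open set $\source(U\cap G^0)$, $f_0h$ is a function in $C_c(G^0)$ and $\tau$ agrees with $\mu$. So $\nu$ restricted there is $\tilde f_0\,\mu$, i.e.\ the canonical value.

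\emph{Step 3 (the main obstacle).} The remaining part of $\nu$ sits on $\source(U\cap\Iso\setminus G^0)$. This set has $\mu_{\mathrm{sf}}$-measure zero but perhaps not $\mu$-measure zero. The key point is that $\nu$ is finite and $\nu\ll\mu$, so by Radon–Nikodym-type reasoning, or directly via inner regularity of $|\nu|$, any Borel set $B$ with $|\nu|(B)>0$ contains a compact $K$ with $|\nu|(K)>0$. Such a $K$ has $\mu(K)<\infty$, so $\mu(K)\le\mu_{\mathrm{sf}}(B)$. Hence $\mu_{\mathrm{sf}}(B)=0$ forces $\mu(K)=0$, thus $|\nu|(K)=0$, a contradiction. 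Therefore $|\nu|(\source(U\cap\Iso\setminus G^0))=0$, and $\tau(f)=\int_{\source(U\cap G^0)}\tilde f\,\mathrm d\mu=\int f|_{G^0}\,\mathrm d\mu=\tau_\mu(f)$. Care is needed to handle the non-Hausdorff boundary and to ensure that the Cauchy--Schwarz bounds are uniform on compact supports; I expect bookkeeping of supports in Step 1 to be the fiddliest part.
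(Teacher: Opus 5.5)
Your proposal is correct and follows essentially the same route as the paper. You trivialise over a bisection $U$, represent $\tau$ on $C_c(U,\mathcal L)$ by a finite complex Radon measure, kill it off $\Iso$ with Lemma \ref{lem:fiso}, get absolute continuity from Cauchy--Schwarz and Lemma \ref{lem:tec}, and use essential freeness on $U\cap\Iso\setminus G^0$; you place the measure on $\source(U)$ where the paper places it on $U$, which is the same thing via $\source|_U$.

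The only difference is bookkeeping. The paper takes trivialisable bisections with $\source(U)\cup\range(U)$ relatively compact. This gives at once the uniform bound you were worried about in Step 1 (a single $e$ serves for all $h$). It also gives $\mu(\source(U))<\infty$, so $\mu_{\mathrm{sf}}=\mu$ on subsets of $\source(U)$. Your inner-regularity argument for the finite measure $|\nu|$ in Step 3 instead handles the semifinite part directly, and it is equally valid.
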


\begin{proof}
Let $\tau$ be a trace on $\cs(G,\mathcal L)$ extending $\mu$.  
By Proposition \ref{prop:trivial} we may pick an open cover $\mathcal U$ of $G$ by trivialisable bisections $U$ such that $\source(U) \cup \range(U)\subset G^0$ is relatively compact. By Proposition \ref{prop:covera}, the groupoid algebra $\cCc(G,\mathcal L)$ is spanned by the subspaces $C_c(U,\mathcal L)$ for $U \in \mathcal U$. By Proposition \ref{prop:inv}, the trace $\tau$ is determined by its values on the groupoid algebra $\cCc(G,\mathcal L)$, so it suffices to show for each $U \in \mathcal U$ and $a \in C_c(U,\mathcal L)$ that $\tau(a) = \int_{G^0} a|_{G^0} \, \mathrm d \mu$. 

The trace $\tau$ is bounded on $C_c(U,\mathcal L)\subset\cs(G,\cL)$ because, picking $h \in C_c(G^0)$ nonnegative with $h = 1$ on $\source(U)\cup\range(U)$, $C_c(U,\mathcal L)$ is contained in the $\cs$-subalgebra $\overline{h \cs(G,\mathcal L) h} \subset \Ped(\cs(G,\mathcal L))$. 

Pick a trivialisation $T \colon U \times \mathbb C \to \mathcal L|_U$. Let $\Phi_U \colon C_c(U) \to C_c(U,\mathcal L)$ be the isometric isomorphism given by $\Phi_U(f) \colon u \mapsto T(u,f(u))$.
Thus by the Riesz representation theorem the functional $\tau \circ \Phi_U \colon C_c(U) \to \mathbb C$ defines a finite complex Radon measure $\nu$ on $U$. 

Under the identification of a fibre $\cL_u$ with $\C$, $u\in G^0$, the trivialisation $T$ induces a continuous function $t \colon U \cap G^0 \to \mathbb T$ such that $t(u) = T(u,1)$. Hence, given $f\in C_c(U)$ and $u\in U \cap G^0$, we have that $\Phi_U(f)(u)=T(u,f(u))=t(u)f(u)$. 

Given $g\in C_c(U \cap G^0)$, we have that $\int_{U \cap G^0}g\,\mathrm d\nu=\tau(\Phi_U(g))=\int_{U \cap G^0}tg\, \mathrm d\mu$. An application of the Riesz representation theorem and \cite[Proposition 7.3.7]{Coh13} shows then that $\int_{U \cap G^0}g\,\mathrm d\nu=\int_{U \cap G^0}tg\,\mathrm d\mu$ for any $g\in B_\infty(U \cap G^0)$.

For any $f \in C_c(U \setminus \Iso)$, Lemma \ref{lem:fiso} implies that $\tau(\Phi_U(f)) = 0$. It follows that the restriction Radon measure $\nu|_{U \setminus \Iso}$ on $U \setminus \Iso$ vanishes.

We now claim that $\nu \ll (\source|_U^{-1})_* \mu|_{\source(U)}$. Let $R$ be the norm of the restriction of $\tau$ to $\cs(C_c(U,\mathcal L))$. By Lemma \ref{lem:tec}, it is enough to show that $|\int_U f \, \mathrm d \nu|^2 \leq R \int_{\source(U)} (f \circ \source|_U^{-1})^2 \, \mathrm d \mu$ for each $f \in C_c(U)$ nonnegative. Indeed, setting $a = \Phi_U(f)$ and noting that $a^* a = (f \circ \source|_U^{-1})^2 \in C_c(\source(U))$, we see that 
\[ \left\lvert \int_U f \, \mathrm d \nu \right\rvert^2 = |\tau(a)|^2 \leq R \tau(a^*a) =  R \int_{\source(U)}  (f \circ \source|_U^{-1})^2 \, \mathrm d \mu. \]
Essential freeness implies $\mu(\source(U \cap \Iso \setminus G^0)) = 0$, so we have $(\source|_U^{-1})_* \mu|_{\source(U)}(U \cap \Iso \setminus G^0) = 0$. By absolute continuity, any integral over $U \cap \Iso \setminus G^0$ with respect to $\nu$ must vanish.

Finally, let $a \in \cCc(U,\mathcal L)$ be arbitrary and let $f = \Phi_U^{-1}(a) \in C_c(U)$. Combining the above observations, we compute
\begin{align*}
\tau(a) & = \int_U f \, \mathrm d\nu \\
& = \int_{U \cap G^0} f \,\mathrm d\nu + \int_{U\setminus\Iso} f|_{U \setminus \Iso} \,\mathrm d\nu + \int_{U\cap\Iso\setminus  G^0} f|_{U \cap \Iso \setminus G^0} \,\mathrm d\nu \\
&= \int_{U \cap G^0} tf \,\mathrm d\mu \\
&=\int a|_{G^0}\,\mathrm d\mu. \qedhere
\end{align*}
\end{proof}

We will now show the converse of Theorem \ref{thm:u} in the untwisted setting.

 \begin{lemma}\label{prop:ev}
Given an étale groupoid $G$, $x\in G^0$ and $H\leq G_x^x$, there is a $*$-representation $L_{G_x/H}\colon \cs(G)\to \mathcal B(\ell^2(G_x/H))$ such that 
\begin{equation}\label{eq:lambda}
L_{G_x/H}(a)(\delta_{gH})=\sum_{k\in G_{\range(g)}}a(k)\delta_{kgH},
\end{equation}
for $a\in \cCc(G)$ and $g\in G_x$. Moreover, there is a state $\varphi$ on $\cs(G)$ such that $\varphi(a)=\sum_{h\in H}a(h)$ for any $a\in \cCc(G)$, and $L_{G_x/H}$ is unitarily equivalent to the GNS representation of $\varphi$. 
\end{lemma}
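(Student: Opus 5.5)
The plan is to define $L_{G_x/H}$ first on $\cCc(G)$ by formula \eqref{eq:lambda}, show that it is a bounded $*$-representation, extend it to $\cs(G)$ by the universal property, and then identify it with the GNS representation of the vector state at $\delta_H$.

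\textbf{Well-definedness.} The right-hand side of \eqref{eq:lambda} depends only on the coset $gH$. Indeed, $\range(gh)=\range(g)$ and $kghH=kgH$ for $h \in H$.

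\textbf{Finiteness of the sums.} By linearity and Proposition \ref{prop:covera}, it suffices to treat $a\in C_c(U)$ for a bisection $U$. Then $a(k)\neq 0$ for at most one $k\in U\cap G_{\range(g)}$, so $L(a)$ sends each basis vector to at most a scalar multiple of one basis vector. For a fixed target coset $k gH$, at most one preimage coset contributes: $g'H\mapsto kg'H$ with $k\in U$ determined by $\range(g')$, and $(\source|_U)^{-1}$ is injective. Hence $L(a)$ is a partial isometry times $\sup|a|$, so it is bounded with $\|L(a)\|\le \sup_U|a|$.

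\textbf{Algebraic relations.} Multiplicativity $L(a*b)=L(a)L(b)$ and $L(a^*)=L(a)^*$ are direct computations on basis vectors. One uses $(a*b)(k)=\sum_{k_1k_2=k}a(k_1)b(k_2)$ and $\langle L(a)\delta_{gH},\delta_{g'H}\rangle=\sum_{k:\,kgH=g'H} a(k)$. For the adjoint, this sum is re-indexed by $k\mapsto k^{-1}$: the condition $kgH=g'H$ is equivalent to $k^{-1}g'H=gH$. Thus $L$ is a $*$-representation of $\cCc(G)$. Composing the operator norm with $L$ gives a $\cs$-seminorm on $\cCc(G)$, so $L$ is bounded by the full norm and extends to $\cs(G)$.

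\textbf{The state.} Set $\varphi(a):=\langle L(a)\delta_H,\delta_H\rangle$. For $a\in\cCc(G)$ this equals $\sum_{k\in G_x,\ kH=H}a(k)=\sum_{h\in H}a(h)$, since $kH=H$ with $k\in G_x$ holds exactly when $k\in H$. This is a positive functional.

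To see that $\varphi$ is a state, take an approximate unit $(e_i)\subset C_c(G^0)$ with $e_i(x)\to1$; then $\varphi(e_i)=e_i(x)\to1$, while $\|\varphi\|\le\|\delta_H\|^2=1$.

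\textbf{Identification with the GNS representation.} The vector $\delta_H$ is cyclic for $L$. Given $g\in G_x$, pick a bisection $U\ni g$ and $a\in C_c(U)$ with $a(g)=1$. Then $L(a)\delta_H=\delta_{gH}$, because $G_x\cap U=\{g\}$. The basis vectors $\delta_{gH}$ therefore all lie in $L(\cs(G))\delta_H$. Uniqueness of the GNS representation for a cyclic vector realising $\varphi$ then gives the unitary equivalence.

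The main obstacle is the norm bound, since without it $L$ is only a representation of $\cCc(G)$ and does not pass to $\cs(G)$. This is handled by the bisection decomposition above. One has to be careful that non-Hausdorffness is harmless here: Proposition \ref{prop:covera} still writes every element as a finite sum over bisections, and all the formulas involved are pointwise.
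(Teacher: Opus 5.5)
Your proposal is correct and follows essentially the same route as the paper. Both define $L_{G_x/H}$ on $\cCc(G)$, bound it on each $C_c(U)$ for a bisection $U$, check the $*$-homomorphism property via the matrix coefficients $\langle L(a)\delta_{g_1H},\delta_{g_2H}\rangle=\sum_{h\in H}a(g_2hg_1^{-1})$, and conclude from cyclicity of $\delta_H$ and uniqueness of the GNS representation. One small correction in your norm bound: injectivity of $g'H\mapsto kg'H$ comes from injectivity of $\range|_U$ (if $k_1g_1H=k_2g_2H$ then $\range(k_1)=\range(k_2)$, so $k_1=k_2$), not from injectivity of $(\source|_U)^{-1}$.
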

\begin{proof}
Given $U\subset G$ a bisection and $a\in C_c(U)$, it is easy to see that $\eqref{eq:lambda}$ defines a bounded operator on $\ell^2(G_x/H)$, hence the same holds for any $a\in \cCc(G)$.

Let us check that $L_{G_x/H}$ is a $*$-homomorphism on $\cCc(G)$. Given $a\in \cCc(G)$ and $g_1,g_2\in G_x$, we have that
\[\langle L_{G_x/H}(a)\delta_{g_1H},\delta_{g_2H}\rangle=\sum_{k\in G_{\range(g_1)}}a(k)\langle \delta_{kg_1H},\delta_{g_2H}\rangle=\sum_{h\in H}a(g_2hg_1^{-1})\]
and
\[\langle\delta_{g_1H},L_{G_x/H}(a^*)\delta_{g_2H}\rangle=\sum_{h\in H}\overline{a^*(g_1hg_2^{-1})}=\sum_{h\in H}a(g_2hg_1^{-1}),\]
thus showing that $L_{G_x/H}(a^*)=L_{G_x/H}(a)^*$.

Given $a,b\in \cCc(G)$, and $g\in G_x$, we have that
\begin{align*}
L_{G_x/H}(ab)\delta_{gH}=\sum_{k\in G_{\range(g)}}(ab)(k)\delta_{kgH}&=\sum_{k\in G_{\range(g)}}\sum_{l\in G_{\range(g)}}a(kl^{-1})b(l)\delta_{kgH}\\
&=\sum_{l\in G_{\range(g)}}b(l)\sum_{k\in G_{\range(g)}}a(kl^{-1})\delta_{kgH}\\
&=\sum_{l\in G_{\range(g)}}b(l)\sum_{t\in G_{\range(l)}}a(t)\delta_{tlgH}.
\end{align*}
Moreover,
\[L_{G_x/H}(a)L_{G_x/H}(b)\delta_{gH}=\sum_{l\in G_{\range(g)}}b(l)L_{G_x/H}(a)\delta_{lgH}=\sum_{l\in G_{\range(g)}}b(l)\sum_{t\in G_{\range(l)}}a(t)\delta_{tlgH},\]
thus showing that $L_{G_x/H}(ab)=L_{G_x/H}(a)L_{G_x/H}(b)$.

The last claims of the proposition follow from \cite[Theorem 5.1.7]{M90} and the facts that $\delta_H$ is cyclic for $L_{G_x/H}$ and $\langle L_{G_x/H}(a)\delta_H,\delta_H\rangle = \sum_{h\in H}a(h)$ for any $a\in \cCc(G)$.
\end{proof}

\begin{lemma}\label{lem:tecb}
Let $G$ be an étale groupoid. Given $a\in \cCc(G)$, the map $F(a)\colon G^0\to\C$ given by $F(a)(x):=\sum_{g\in G_x^x}a(g)$ is Borel measurable.
\end{lemma}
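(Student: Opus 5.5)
By linearity of $a\mapsto F(a)$ and Proposition \ref{prop:covera}, it suffices to treat $a\in C_c(U)$ for a single open bisection $U\subset G$. Indeed, taking $\mathcal U$ to be the cover of $G$ by all bisections, every $a\in\cCc(G)$ is a finite sum $\sum_i a_i$ with $a_i\in C_c(U_i)$ and $U_i$ a bisection. Moreover $F(a)=\sum_i F(a_i)$ pointwise, because each sum over the discrete set $G_x^x$ has only finitely many nonzero terms. A finite sum of Borel functions is Borel, so the reduction is complete.

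Now fix a bisection $U$ and $a\in C_c(U)$. For $x\in G^0$ the set $G^x_x\cap U$ has at most one element, since $\source|_U$ is injective. If $x\in\source(U)$, this element exists exactly when $(\source|_U)^{-1}(x)\in\Iso$. Therefore
\[F(a)(x)=\begin{cases} a\bigl((\source|_U)^{-1}(x)\bigr) & \text{if } x\in \source(U\cap\Iso),\\ 0 & \text{otherwise.}\end{cases}\]
So $F(a)=\mathbf{1}_{\source(U\cap\Iso)}\cdot (a\circ(\source|_U)^{-1})$, where the second factor is extended by $0$ off $\source(U)$. The function $a\circ(\source|_U)^{-1}$ is continuous on the open set $\source(U)$, because $\source|_U$ is a homeomorphism onto its image and $a$ is continuous on $U$. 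Extended by zero it is Borel on $G^0$: its preimages of open sets are open subsets of $\source(U)$, possibly together with the set $G^0\setminus\source(U)$, which is closed.

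It remains to see that $\source(U\cap\Iso)$ is Borel in $G^0$. Since $\Iso$ is closed in $G$, the set $U\cap\Iso$ is relatively closed in $U$. Its image under the homeomorphism $\source|_U\colon U\to\source(U)$ is then relatively closed in the open set $\source(U)$. Hence it is the intersection of an open set and a closed set, and so it is Borel. Equivalently, $\source(U\cap\Iso)=\{x\in\source(U)\mid \range((\source|_U)^{-1}(x))=x\}$ is the equalizer of two continuous maps into the Hausdorff space $G^0$, which gives closedness in $\source(U)$ directly.

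The only step needing any care is the first reduction, and it is routine; I expect no real obstacle. The one subtlety is that the functions in $\cCc(G)$ may fail to be continuous on $G$. This is why the argument works bisection by bisection and never uses continuity of $a$ on all of $G$.
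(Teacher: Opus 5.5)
Your proof is correct and matches the paper's argument: reduce by linearity to $a\in C_c(U)$ for a bisection $U$, then observe that $F(a)=(a\circ(\source|_U)^{-1})\,1_{\source(U\cap\Iso)}$ is Borel. You also supply the details the paper leaves implicit, namely that $\source(U\cap\Iso)$ is Borel because it is relatively closed in the open set $\source(U)$, and that the zero-extended function is Borel.
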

\begin{proof}
It suffices by linearity to verify the claim for $a\in C_c(U)$, where $U\subset G$ is a bisection. In this case, the function $F(a)=(a\circ (\source|_U)^{-1})1_{\source(U\cap \Iso)}$ is Borel.
\end{proof}

\begin{theorem}\label{thm:nu}
	Let $G$ be an étale groupoid and $\mu$ an invariant Radon measure on $G^0$. Then there is a trace $\varphi$ on $\cs(G)$ which satisfies 
	\[\varphi(f)=\int\sum_{g\in G_x^x}f(g)\,\mathrm d\mu(x)\]
	for each $f\in \cCc(G)$. Furthermore, if $\mu$ is not essentially free, then $\varphi$ is an extension of $\mu$ which is different from the canonical one.
\end{theorem}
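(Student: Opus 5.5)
We define $\varphi$ directly on $\cCc(G)$ by the displayed formula, using Lemma~\ref{lem:tecb} for measurability, and then check positivity, the trace property and the extension statements.

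\textbf{Well-definedness.} By Lemma~\ref{lem:tecb}, $F(a)$ is Borel for $a\in\cCc(G)$. For $a\in C_c(U)$ with $U$ a bisection, $F(a)$ is bounded and supported in the compact set $\source(\supp a)$, so it is $\mu$-integrable. Hence $\varphi(a):=\int F(a)\,\mathrm d\mu$ is a linear functional on $\cCc(G)$, and it agrees with $\mu$ on $C_c(G^0)$.

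\textbf{Positivity.} For each $x$, Lemma~\ref{prop:ev} with $H=G_x^x$ gives a state $\varphi_x$ on $\cs(G)$ with $\varphi_x(a)=F(a)(x)$. Thus $F(a^*a)(x)\ge 0$ pointwise, so $\varphi(a^*a)\ge0$.

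\textbf{Trace property.} We verify $\varphi(ab)=\varphi(ba)$ for $a\in C_c(U)$ and $b\in C_c(V)$, with $U,V$ bisections. Here
\[F(ab)(x)=\sum_{k\in G_x,\ \range(k)=\theta\ldots} a(gk^{-1})b(k),\]
summing over $g\in G_x^x$. Concretely, $(ab)(g)=a(g k^{-1})b(k)$ for the unique $k\in V\cap G_x$, and this contributes only when $gk^{-1}\in U$, that is, when $x\in\source(V)$ and $\range(k)$ is fixed appropriately. Substituting $y=\range(k)$ and using invariance of $\mu$ under the homeomorphism $\theta_V\colon\source(V)\to\range(V)$, the map $g\mapsto k g k^{-1}$ bijects $G_x^x$ onto $G_y^y$. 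It converts $\sum_{g\in G_x^x}a(gk^{-1})b(k)$ into $\sum_{h\in G_y^y} b(k)a(k^{-1}h)$ evaluated at $y$, which is $F(ba)(y)$.

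More cleanly, we write $F(ab)(x)=\sum_{k\in G_x}\sum_{h: hk\in G^x_x}$, as a sum over pairs $(h,k)$ with $hk\in\Iso$, $\source(k)=x$, of $a(h)b(k)$. Likewise $F(ba)(y)$ is the sum over pairs with $kh\in\Iso$ and $\source(h)=y$. The map $(h,k)\mapsto(k,h)$ relates these, with $y=\range(k)$. Invariance of $\mu$, namely $\int f\circ\theta_V\,\mathrm d\mu=\int_{\range(V)} f\,\mathrm d\mu$ for Borel $f$ (extended from sets by outer regularity), then gives equality of the integrals. Some care is needed since these are sums over one point of $V$ and finitely many points of $U$ meeting isotropy.

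\textbf{Extension to the Pedersen ideal.} This is the main obstacle: passing from a positive tracial functional on $\cCc(G)$ to a trace on $\cs(G)$. I would use the same route as the construction of $\tau_\mu$ in \cite[Corollary 7.5]{GM26}. First, for $h\in C_c(G^0)$ with $0\le h\le 1$, the functional $\varphi$ restricted to $h\cCc(G)h$ satisfies $|\varphi(a)|\le\|a\|\,\mu(\supp h)$. This holds because $\varphi=\int\varphi_x\,\mathrm d\mu(x)$ and $|\varphi_x(a)|\le\|a\|$, with $F(a)$ vanishing off $\supp h$. Hence $\varphi$ extends to bounded positive functionals on the hereditary subalgebras $\overline{h\cs(G)h}$, and these are compatible as $h$ grows. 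Their union is a dense ideal-generating set contained in $\Ped(\cs(G))$, so the general construction of \cite{GM26} produces a trace, with traciality passing by continuity.

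\textbf{Difference from the canonical extension.} Suppose $\mu$ is not essentially free. Then there is a bisection $U$ and a compact $K\subset\source(U\cap\Iso\setminus G^0)$ with $\mu(K)>0$. Consider the set $W:=\{x\in\source(U): (\source|_U)^{-1}(x)\in\Iso\setminus G^0\}$, and take $a\in C_c(U)$ nonnegative with $a=1$ on $(\source|_U)^{-1}(K)$. On $W$ the element of $U$ over $x$ is non-unit isotropy, so $F(a)(x)-a|_{G^0}(x)\ge a((\source|_U)^{-1}(x))$. Elsewhere the difference is $0$, because when $U\cap G_x^x$ is a unit both sums coincide. Therefore $\varphi(a)-\tau_\mu(a)\ge\mu(K)>0$, so $\varphi\ne\tau_\mu$.
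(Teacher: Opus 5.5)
Your proposal is correct and follows essentially the same route as the paper. You define $\varphi=\int F(\cdot)\,\mathrm d\mu$ via Lemmas \ref{prop:ev} and \ref{lem:tecb}, check traciality on sections supported in bisections using invariance of $\mu$ under $\theta_V$, and get the extension from the local bound $|\varphi(a)|\le\|a\|\,\mu(\supp h)$, which is exactly the boundedness on $\cCc(G)|_Y$ required by \cite[Proposition 7.2]{GM26}. You then separate $\varphi$ from $\tau_\mu$ with a function supported on a bisection that meets non-unit isotropy over a set of positive measure, as the paper does. The first displayed formula in your trace step is garbled, but the conjugation argument $g\mapsto kgk^{-1}$ that follows it is correct and needs only $V$ to be a bisection; also, the sum over $U$ involves at most one point, not finitely many.
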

\begin{proof}
By Lemmas \ref{prop:ev} and \ref{lem:tecb}, there exists a contractive positive linear map $F\colon \cs(G)\to B_{\infty}(G^{0})$ that satisfies $F(a)(x)=\sum_{g \in G_x^x}a(g)$ for each $a\in \cCc(G)$ and $x\in G^{0}$. 

Let $\varphi$ be the positive linear functional on $\cCc(G)$ given by $\varphi(a) = \int F(a)\, \mathrm d\mu$ for $a \in \cCc(G)$. By \cite[Proposition 7.2]{GM26}, this extends uniquely to a positive trace on $\cs(G)$ if and only if it is tracial and it is bounded on $\cCc(G)|_Y = \{f \in \cCc(G) \mid f = 0 \text{ outside } G^Y_Y\}$ for each relatively compact open $Y \subset  G^0$. The boundedness condition follows from contractivity of $F$, that $F(\cCc(G)|_Y) \subset  B_{\infty}(Y)$, and that $\tau_\mu$ is bounded on $B_{\infty}(Y)$.

Let us show that $\varphi$ is tracial. 
Take bisections $U,V\subset  G$ and functions $f\in C_c(U)$ and $g\in C_c(V)$. Then
	\begin{align*}F(fg)(x)=
		\begin{cases}
			f((\range|_U)^{-1}(x))g((\source|_V)^{-1}(x)),&\text{ if $x\in \source(UV)$, $(\source|_{UV})^{-1}(x)\in\Iso$}\\
			0,&\text{ otherwise,} 
	\end{cases}\end{align*}
	and
	\begin{align*}F(gf)(x)=
		\begin{cases}
			g((\range|_V)^{-1}(x))f((\source|_U)^{-1}(x)),&\text{ if $x\in \source(VU)$, $(\source|_{VU})^{-1}(x)\in\Iso$}\\
			0,&\text{ otherwise.} 
	\end{cases}\end{align*}
	Given $x\in \source(UV)$ such that $(\source|_{UV})^{-1}(x)\in\Iso$, there exists $h\in U$ and $k\in V$ such that $\source(h)=\range(k)$ and $\range(h)=\source(k)=x$. Then 
	\[F(fg)(x)=f(h)g(k)=g(k)f(h)=F(gf)(\range(k))=F(gf)((\range\circ (\source|_V)^{-1})(x)).\]
	By invariance of $\mu$, we conclude that $\varphi(fg)=\varphi(gf)$ and hence $\varphi$ is tracial.

Suppose that $\mu$ is not essentially free, and take a bisection $U\subset  G$ such that $\mu_{\mathrm{sf}}(\source(U\cap\Iso\setminus G^{0}))>0$. There exists a compact set $K\subset \source(U\cap\Iso\setminus G^{0})$ such that $\mu(K)>0$. Let $f\in C_c(U)$ such that $0\leq f\leq 1$ and $f = 1$ on $(\source|_U)^{-1}(K)$.
Then $\varphi(f)=\int F(f)\,\mathrm d\mu>\int E(f)\,\mathrm d \mu$, hence $\varphi$ is different from the canonical extension of $\mu$.
\end{proof}

\begin{remark}
The converse to Theorem \ref{thm:u} does not hold in general for twisted \'etale groupoids. For example, the irrational rotation algebra $A_\theta$ has a well-known realisation as the twisted group $\cs$-algebra of a twist over $\mathbb Z^2$, and is known to have a unique tracial state. The point-evaluation on the unit $e \in \mathbb Z^2$ can therefore only have one extension to a trace on $A_\theta$, despite not being essentially free.
\end{remark}

\begin{remark}
We do not discuss here the question of uniqueness of extension of an invariant Radon measure to a trace on the reduced (or the essential) $\cs$-algebra of an étale groupoid. In this direction, the sharpest results are \cite[Corollary 4.3]{BKKO} for groups and, more generally, \cite[Corollary 1.12]{Urs21} for transformation groupoids with compact unit spaces. 
\end{remark}

\section{Existence of traces on the essential C*-algebra}\label{section:ess}

In this section, we provide criteria ensuring that an invariant Radon measure on the unit space of an étale groupoid $G$ can be extended to a trace on $\cs_\ess(G)$. The canonical trace $\tau_\mu$ on $\cs(G)$ associated to an invariant Radon measure $\mu$ does not always descend to the quotient $\cs_\ess(G)$ (see \cite[Remark 7.7]{GM26}).

The following two lemmas are useful for determining when traces descend to the essential $\cs$-algebra.

\begin{lemma}\label{lem:descend}
Let $\pi \colon A \to B$ be a surjective $*$-homomorphism and let $\tau$ be a trace on $A$. If $\tau(a) = 0$ for each $a \in \Ped(A)_+ \cap \ker \pi$, then there is a unique trace $\sigma$ on $B$ such that $\sigma \circ \pi = \tau$.
\end{lemma}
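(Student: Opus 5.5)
We first note that $\pi$ maps $\Ped(A)$ onto $\Ped(B)$. The image $\pi(\Ped(A))$ is an ideal of $B$, since $\pi$ is surjective, and it is dense. Hence it contains $\Ped(B)$ by minimality. For the reverse inclusion we use the standard fact (see \cite[Section 5.6]{Ped18}) that the image of the Pedersen ideal under a surjective $*$-homomorphism is the Pedersen ideal of the quotient. Concretely, $\Ped(A)$ is generated as an ideal by elements $f(a)$ with $a \in A_+$ and $f$ a continuous function vanishing near $0$. We have $\pi(f(a)) = f(\pi(a)) \in \Ped(B)$, so $\pi(\Ped(A)) \subset \Ped(B)$. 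Thus $\pi|_{\Ped(A)}\colon \Ped(A) \to \Ped(B)$ is surjective.

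Next we define $\sigma(\pi(a)) := \tau(a)$ for $a \in \Ped(A)$. For this to be well defined we need $\tau$ to vanish on $\Ped(A) \cap \ker\pi$, whereas the hypothesis only gives vanishing on positive elements of that set. This is the main step. The set $I := \Ped(A)\cap\ker\pi$ is an ideal of $A$, and hence a $*$-invariant two-sided ideal of $\Ped(A)$. I would write an element $c \in I$ as a linear combination of positive elements of $I$. One way is to use that $I$ is an ideal of $\Ped(A)$, so $c^*c \in I_+$. Then use the factorisation $c = u|c|^{1/2}$ with $u$ in the bidual. Writing $c = \lim c(e+|c|)^{-1}\ldots$ is awkward, so instead I would show that $I$ is a hereditary-type ideal whose positive part spans it. Concretely, decompose $c$ into real and imaginary parts, which remain in $I$ since $I$ is $*$-closed. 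For self-adjoint $c \in I$, write $c = c_+ - c_-$. Since $c_\pm \le |c|$, and $|c| \in \ker\pi$ with $|c|$ in the hereditary subalgebra generated by $c$, the elements $c_\pm$ lie in $\ker\pi$. They also lie in $\Ped(A)$, because $\Ped(A)$ is hereditary on positive elements and $|c| \in \Ped(A)$. The point $|c|\in\Ped(A)$ needs justification: $c^2 \in \Ped(A)$, and the Pedersen ideal is closed under such functional calculus (cf.\ the fact recalled above that hereditary subalgebras generated by finite subsets of $\Ped(A)$ lie in $\Ped(A)$). So $|c|$ lies in the hereditary $\cs$-subalgebra generated by $c$, and hence in $\Ped(A)$. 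This gives $\tau(c) = \tau(c_+) - \tau(c_-) = 0$.

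Finally we verify the properties of $\sigma$. It is linear because $\pi$ is linear. It is tracial because $\sigma(\pi(a)\pi(b)) = \tau(ab) = \tau(ba)$. For positivity, each positive $y \in \Ped(B)$ should lift to a positive element of $\Ped(A)$. Write $y = \pi(a)$ with $a \in \Ped(A)$, and replace $a$ by its real part. Then $y = \pi(a_+) - \pi(a_-)$, and since $y \ge 0$ we get $\pi(a_-) = 0$, so $y = \pi(a_+)$ with $a_+ \in \Ped(A)_+$, by the same hereditary argument. Hence $\sigma(y) = \tau(a_+) \ge 0$. Uniqueness is immediate from the surjectivity of $\pi\colon\Ped(A)\to\Ped(B)$.

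The main obstacles are the Pedersen-ideal bookkeeping steps: that $\pi(\Ped(A)) = \Ped(B)$, and that $\Ped(A)$ is closed under taking $|c|$ and $c_\pm$ for self-adjoint $c$. I would cite \cite[Section 5.6]{Ped18} for both.
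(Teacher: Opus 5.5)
Your proof is correct and has the same skeleton as the paper's: minimality of $\Ped(B)$ gives $\Ped(B)\subset\pi(\Ped(A))$, and then you set $\sigma(\pi(a)):=\tau(a)$. The key step, however, is handled differently.

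The paper shows that $\tau$ vanishes on $\Ped(A)\cap\ker\pi$ by Cauchy--Schwarz. For $a$ in this ideal, $\tau$ is a positive, hence bounded, functional on $\cs(a)\subset\Ped(A)$. Since $a^*a\in\Ped(A)_+\cap\ker\pi$, we have $\tau(a^*a)=0$, and so $\tau(a)=0$.

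You instead decompose $c$ into real and imaginary parts, and those into positive and negative parts, each of which lies in $\Ped(A)_+\cap\ker\pi$. This argument is purely order-theoretic and needs no boundedness estimate. It also has a second payoff: the same decomposition gives positivity of $\sigma$ through the lift $y=\pi(a_+)$. The paper leaves that point implicit, along with the inclusion $\pi(\Ped(A))\subset\Ped(B)$, which you also check.

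Both routes rest on the same fact, $\cs(c)\subset\Ped(A)$ for $c\in\Ped(A)$. So you can streamline: $c_\pm\in\cs(c)\subset\Ped(A)$ and $\pi(c_\pm)=\pi(c)_\pm=0$ directly. There is no need for the detour through $|c|$ or the heredity of $\Ped(A)_+$. Also remove the abandoned aside about $c=u|c|^{1/2}$, which is not the polar decomposition anyway.
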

\begin{proof}
The map $\pi \colon \Ped(A) \to \Ped(B)$ is surjective because $\pi(\Ped(A)) \subset B$ is a dense ideal, hence contains $\Ped(B)$. Let $a \in \Ped(A) \cap \ker \pi$ be arbitrary. As $\tau$ is bounded on $\cs(a) \subset \Ped(A)$ and $\tau(a^* a) = 0$ we conclude $\tau(a) = 0$. It follows that the positive tracial linear functional $\sigma \colon \Ped(B) \to \mathbb C$ given by $\sigma(\pi(a)) = \tau(a)$ for $a \in \Ped(A)$ is well-defined. 
\end{proof}

The following idea is also used in \cite[Corollary 7.8]{GM26}.
\begin{lemma}\label{lem:groupoid ped}
Let $(G,\mathcal L)$ be a twisted \'etale groupoid and let $a \in \Ped(\cs(G,\mathcal L))_+$. Then there are $f \in C_c(G^0)_+$ and $c \in \cs(G,\mathcal L)_+$ such that $\tau(fcf) = \tau(a)$ for any trace $\tau$ on $\cs(G,\mathcal L)$ and, for any closed ideal $I \vartriangleleft \cs(G,\mathcal L)$, $fcf \in I$ if and only if $a \in I$.
\end{lemma}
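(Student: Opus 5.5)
Let $A := \cs(G,\mathcal L)$. The plan is to use the fact that a positive element of the Pedersen ideal is ``supported'' on a compact part of $G^0$, and then to move it onto a corner $fAf$ with $f \in C_c(G^0)_+$. Moving it there changes neither trace values nor membership in closed ideals.

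\textbf{Step 1: find $e \in C_c(G^0)_+$ with $a = ea$.} Write $a = b^2$ with $b = a^{1/2}$. Positive elements of $\Ped(A)$ lie in the hereditary subalgebra generated by finitely many elements of the minimal dense ideal, and $\cCc(G,\mathcal L)$ is a dense ideal's generating set by Proposition \ref{prop:inv}. Concretely, $\Ped(A)_+$ consists of elements dominated by finite sums of elements $g(x)$ with $x \in A_+$ and $g \in C_c((0,\infty))_+$.

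A cleaner route is the following. Since $C_c(G^0)$ contains an approximate unit for $A$, the ideal $\mathrm{span}\,A C_c(G^0) A$ is dense. It is contained in $\Ped(A)$, because $C_c(G^0) \subset \cCc(G,\mathcal L) \subset \Ped(A)$. Hence it contains $\Ped(A)$. So $a = \sum_{i=1}^n x_i h_i y_i$ with $h_i \in C_c(G^0)$. Choose $e \in C_c(G^0)_+$ with $e h_i = h_i$ for all $i$, and put $k := \sum_i x_i e\, e\, x_i^*$-type majorants.

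More precisely, I would use that $a \le \sum_i (x_i h_i)(x_i h_i)^* + \sum_i (h_i y_i)^*(h_i y_i)$ up to a factor. This follows from the polarization inequality $2\,\mathrm{Re}(u^*v) \le u^*u + v^*v$ applied to the self-adjoint $a$. Thus $a \le \sum_j z_j e z_j^*$ for suitable $z_j \in A$, with $e$ absorbing the $h_i$.

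\textbf{Step 2: transfer to the corner.} Set $w := (\sum_j z_j e z_j^*)$. By the standard Cuntz–Pedersen technique, $a \le \sum_j z_j e^{1/2}\,e^{1/2} z_j^*$, and there exist $r_j$ with $a = \sum_j r_j^* r_j$ where each $r_j = d_j e^{1/2}$ lies in $A e^{1/2}$. This is the decomposition $a^{1/2} = \sum$ via $a \le \sum v_jv_j^*$ (Pedersen's lemma: $a \le \sum v_j v_j^*$ implies $a = \sum u_j^*u_j$ with $u_j u_j^* \le v_j^*v_j$, taking $v_j = e^{1/2} z_j^*$).

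So $a = \sum_{j=1}^m u_j^* u_j$ with $u_j u_j^* \in \overline{e^{1/2} A e^{1/2}}$. Each $u_j u_j^*$ then equals $f c_j f$ for any $f \in C_c(G^0)_+$ with $f = 1$ on $\supp e$, taking $c_j := u_j u_j^*$. Indeed $f e = e$, so $f$ acts as a unit on $\overline{eAe}$.

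To get a single element I put $c := \mathrm{diag}$-free sum: $c := \sum_j u_j u_j^* \in A_+$ and $fcf = c$. Then for any trace $\tau$ we get $\tau(fcf) = \sum_j \tau(u_ju_j^*) = \sum_j \tau(u_j^*u_j) = \tau(a)$. This uses that $u_j^*u_j \le a \in \Ped(A)$ and that $u_j u_j^* \in \overline{eAe} \subset \Ped(A)$, so both lie in $\Ped(A)$. The tracial identity $\tau(u^*u) = \tau(uu^*)$ for such $u$ is standard for lower semicontinuous traces, and a trace on $\Ped(A)$ satisfies it by \cite[Section 5.2]{Ped18}.

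For a closed ideal $I$, $a \in I$ iff all $u_j \in I$, iff $\sum_j u_j u_j^* \in I$, iff $fcf \in I$. This works because, for a positive sum $\sum_j u_j^* u_j$ or $\sum_j u_j u_j^*$, membership in the hereditary ideal $I$ is equivalent to each $u_j \in I$.

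\textbf{Main obstacle.} The delicate point is Step 2's identity $\tau(u^*u) = \tau(uu^*)$ for traces defined only on $\Ped(A)$, rather than for lower semicontinuous traces on $A_+$. I would handle it by noting that $u_j$ lies in $\Ped(A)$: we have $u_j^* u_j \le a$, so $u_j \in \Ped(A)$ because $\Ped(A)$ is hereditary in the appropriate sense and $|u_j| \in \Ped(A)$. Then $u_j^*, u_j \in \Ped(A)$ and traciality gives $\tau(u_j^*u_j) = \tau(u_ju_j^*)$ directly.
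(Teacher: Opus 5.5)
Your proposal is correct in substance, and its endgame is the paper's. You write $a=\sum_j w_jw_j^*$ with each $w_j$ having a right local unit in $C_c(G^0)$, set $c=\sum_j w_j^*w_j$ so that $fcf=c$, and conclude by traciality and heredity of closed ideals. The difference is how the decomposition of $a$ is obtained. The paper quotes \cite[Lemma 5.2]{GM26}, which directly gives $a=\sum_i c_if_if_i^*c_i^*$ with $f_i\in C_c(G^0)$. Then $w_i=c_if_i$ visibly lies in $\Ped(\cs(G,\mathcal L))$, and nothing more is needed. You rebuild such a decomposition by hand in three steps. First, $\Ped(A)=\operatorname{span}A\,C_c(G^0)\,A$ by minimality of the Pedersen ideal, since $C_c(G^0)\subset\Ped(A)$ contains an approximate unit. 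Second, a polarization bound $a\le\sum_j z_jez_j^*$. Third, a Riesz-decomposition step. This makes the lemma self-contained (it is essentially a proof of the cited result), at the cost of operator-inequality bookkeeping that has to be done carefully.

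As written, that bookkeeping has three slips, each easily repaired.

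First, factor $x_ih_iy_i=(x_ie)(h_iy_i)$, not as $(x_ih_i)(h_iy_i)$. This gives $a\le\tfrac12\sum_i(x_ie^2x_i^*+y_i^*h_i^*e^2h_iy_i)$, which has the required form after renaming $e^2$ as $e$.

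Second, the sides in the Riesz step are swapped. If $r_j\in Ae^{1/2}$, then $a=\sum_jr_j^*r_j$ would lie in $e^{1/2}Ae^{1/2}$, which is false in general. Your version of Pedersen's lemma must be applied with $v_j=z_je^{1/2}$, not $e^{1/2}z_j^*$. Then $a\le\sum_jv_jv_j^*=\sum_jz_jez_j^*=:b$. Setting $u_j:=\lim_{\varepsilon\to0}v_j^*(\varepsilon+b)^{-1/2}a^{1/2}$, you get $a=\sum_ju_j^*u_j$, $u_ju_j^*\le e^{1/2}z_j^*z_je^{1/2}$ and $u_j\in\overline{e^{1/2}A}$.

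Third, the argument in your last paragraph is not justified. Heredity gives $u_j^*u_j\in\Ped(A)$, but nothing you cite lets you pass from there to $|u_j|\in\Ped(A)$ and then to $u_j\in\Ped(A)$. You do not need it. Since $fe=e$ and $u_ju_j^*\in\overline{eAe}$, you get $(1-f)u_ju_j^*=0$ in the unitization. Hence $u_j=fu_j\in C_c(G^0)A\subset\Ped(A)$, and $\tau(u_j^*u_j)=\tau(u_ju_j^*)$ follows directly from traciality on $\Ped(A)$. This is precisely the role of the factor $f_i$ in the paper's $c_if_i$.
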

\begin{proof}
By \cite[Lemma 5.2]{GM26}, there are $f_1,\dots,f_k \in C_c(G^0)$ and $c_1,\dots,c_k \in \cs(G,\mathcal L)$ such that $a = \sum_{i=1}^k c_i f_i f_i^* c_i^*$. Set $c = \sum_{i=1}^k f_i^* c_i^* c_i f_i$ and pick $f \in C_c(G^0)$ with $0 \leq f \leq 1$ such that $f = 1$ on $\bigcup_{i=1}^k \supp f_i$, so that $c = fcf$. Clearly $\tau(a) = \tau(fcf)$ for any trace $\tau$. Membership of an ideal $I$ passes between $a$ and $c$ by hereditariness of $I$ and the $\cs$-identity within $\cs(G,\mathcal L)/I$.
\end{proof}

A unit $y\in G^0$ is said to be \emph{dangerous} if there exists a net $(g_i)\subset G$ such that $g_i\to y$ and $g_i\to h\neq y$. We denote by $D^0$ the set of dangerous units. Given any twist $\mathcal L$ over $G$, a section $\xi \in \cCc(G,\mathcal L)$ is continuous at every $y \in G^0 \setminus D^0$.

\begin{lemma}\label{point evaluation state}
Let $(G,\mathcal L)$ be a twisted \'etale groupoid and let $x \in G^0 \setminus D^0$ be a non-dangerous unit. Then there is a state $\varphi$ on $\cs_\ess(G,\mathcal L)$ such that $\varphi([a])=a(x)$ for any $a\in \cCc(G,\mathcal L)$. 
\end{lemma}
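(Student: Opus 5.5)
The plan is to construct $\varphi$ first on $\cs(G,\mathcal L)$ and then show that it vanishes on $J$, so that it descends to the quotient $\cs_\ess(G,\mathcal L)$.

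\emph{Step 1: a state on the full algebra.} Consider the left regular representation $\lambda_x$ on $\ell^2(G_x,\mathcal L)$. Since $\langle\lambda_x(a)\delta_x,\delta_x\rangle = a(x)$ for $a\in\cCc(G,\mathcal L)$, the vector state $\psi := \langle\lambda_x(\cdot)\delta_x,\delta_x\rangle$ is a state on $\cs(G,\mathcal L)$ satisfying $\psi(a)=a(x)$ on $\cCc(G,\mathcal L)$. Here $\delta_x$ denotes the section supported at $x$ with value $1_x$, which is a unit vector. Equivalently, $\psi = \mathrm{ev}_x\circ E$ on $\cCc(G,\mathcal L)$, and by continuity $\psi(b)=E(b)(x)$ for all $b\in\cs(G,\mathcal L)$.

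\emph{Step 2: vanishing on $J$.} By the Cauchy--Schwarz inequality, $|\psi(b)|^2\le \psi(b^*b)$, so it suffices to show that $E(b^*b)(x)=0$ for every $b\in J$. For such $b$, the function $E(b^*b)$ vanishes off a meagre set. The point is that at a non-dangerous unit $x$, the function $E(c)$ should be continuous at $x$ for every $c\in\cs(G,\mathcal L)$.

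\emph{Step 3: continuity at $x$ (the main obstacle).} For $c\in\cCc(G,\mathcal L)$, the section $c$ is continuous at $x$ because $x\notin D^0$. Hence $E(c)=c|_{G^0}$ is continuous at $x$ as a function on $G^0$, using the identification of $\mathcal L|_{G^0}$ with $G^0\times\C$ from Proposition \ref{prop:trivial}. Since $E$ is contractive into $B_\infty(G^0)$ with the sup norm, and uniform limits of functions continuous at $x$ remain continuous at $x$, the function $E(c)$ is continuous at $x$ for all $c\in\cs(G,\mathcal L)$.

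\emph{Step 4: conclusion.} Now let $b\in J$ and set $h:=E(b^*b)\ge 0$. Suppose $h(x)>0$. By continuity at $x$, there is an open neighbourhood $V$ of $x$ on which $h>h(x)/2$. But $h$ vanishes off a meagre set, so $V$ would be meagre. This contradicts the Baire category theorem, since $G^0$ is locally compact Hausdorff and nonempty open subsets of such spaces are nonmeagre. Hence $h(x)=0$, so $\psi(b)=0$ and $\psi(J)=0$.

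Therefore $\psi$ factors through a positive functional $\varphi$ on $\cs_\ess(G,\mathcal L)$ with $\varphi([a])=a(x)$. Its norm is $\|\psi\|=1$, attained for instance via an approximate unit from $C_c(G^0)$ whose elements equal $1$ at $x$, so $\varphi$ is a state.
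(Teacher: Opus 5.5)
Your proposal is correct and is essentially the paper's argument. Both use the vector state $a\mapsto E(a)(x)$, continuity of $E(\cdot)$ at the non-dangerous unit $x$, Cauchy--Schwarz, and the fact that nonempty open sets are non-meagre to show the state vanishes on $J$. The only difference is cosmetic: you apply Cauchy--Schwarz at $x$ and then use continuity of $E(b^*b)$, whereas the paper uses continuity of $E(a)$ and applies Cauchy--Schwarz at every point of a neighbourhood; you also spell out the uniform-limit and Baire steps that the paper leaves implicit.
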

\begin{proof}
Fix $a\in \cs(G,\mathcal L)$. The function $E(a) \in B_\infty(G^0)$ is continuous at $x$. Suppose that $E(a)(x)\neq 0$. Then there is a neighbourhood $U$ of $x$ such that, for each $y\in U$, $E(a)(y)\neq 0$. By \cite[Theorem 3.3.2]{M90}, we conclude that $E(a^*a)(y)>0$ for each $y\in U$, hence $E(a^*a)\notin M(G^0)$ and $a\notin J$. The state $a \mapsto E(a)(x)$ on $\cs(G,\mathcal L)$ therefore vanishes on $J$ and the state $\varphi \colon [a] \mapsto E(a)(x)$ is well-defined on $\cs_\ess(G,\mathcal L)$.
\end{proof}

If $G$ can be covered by countably many bisections, then $D^0$ is meagre in $G^0$ (see, for example, \cite[Lemma 3.14]{BM26}) and therefore has dense complement.

\begin{proposition}\label{prop:lit}
Let $G$ be an étale groupoid. Then $D^0=\source(\overline{G^0}\setminus G^0)$. Moreover, if $G$ can be covered by countably many bisections, then $D^0$ is a Borel subset of $G^0$.
\end{proposition}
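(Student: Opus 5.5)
We prove the two claims separately, starting with $D^0=\source(\overline{G^0}\setminus G^0)$ by a double inclusion.

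\emph{Inclusion $D^0 \subset \source(\overline{G^0}\setminus G^0)$.} Let $y\in D^0$, so there is a net $(g_i)$ with $g_i\to y$ and $g_i\to h\neq y$. Since $G^0$ is open and $g_i\to y\in G^0$, eventually $g_i\in G^0$. Hence $h\in\overline{G^0}$. Continuity of $\source$ and Hausdorffness of $G^0$ give $\source(g_i)=g_i\to y$ and $\source(g_i)\to\source(h)$, so $\source(h)=y$. If $h$ were in $G^0$, then $h=\source(h)=y$, a contradiction. Thus $h\in\overline{G^0}\setminus G^0$ and $y=\source(h)$.

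\emph{Inclusion $\source(\overline{G^0}\setminus G^0)\subset D^0$.} Let $h\in \overline{G^0}\setminus G^0$ and choose a net $(x_i)\subset G^0$ with $x_i\to h$. Then $x_i=\source(x_i)\to\source(h)=:y$, so the net $(x_i)$ converges both to $y$ and to $h$. Since $h\notin G^0$, we have $h\ne y$, and therefore $y\in D^0$.

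\emph{Borel measurability.} Now assume $G$ is covered by countably many bisections $(U_n)_n$. Then
\[D^0=\bigcup_n \source\bigl(U_n\cap(\overline{G^0}\setminus G^0)\bigr),\]
so it suffices to show that each term is Borel. The set $U_n\cap\overline{G^0}\setminus G^0$ is the intersection of the relatively closed set $U_n\cap\overline{G^0}$ with the relatively closed set $U_n\setminus G^0$ of $U_n$. Hence it is closed in $U_n$. Since $\source|_{U_n}$ is a homeomorphism onto the open set $\source(U_n)$, the image $\source\bigl(U_n\cap(\overline{G^0}\setminus G^0)\bigr)$ is relatively closed in $\source(U_n)$. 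It is therefore the intersection of an open set with a closed set, hence Borel. A countable union of Borel sets is Borel, so $D^0$ is Borel.

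The only delicate point is the first inclusion: one must make sure the limit $h$ lies in $\overline{G^0}$ and not merely in $G$. This is exactly where openness of $G^0$ is used, since it forces the net $g_i$ to lie eventually in $G^0$.
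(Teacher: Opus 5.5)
Your proof is correct and follows essentially the same route as the paper. Both arguments prove $D^0=\source(\overline{G^0}\setminus G^0)$ by double inclusion, and both get Borelness from the closedness of $\overline{G^0}\setminus G^0$ together with the fact that $\source$ restricts to a homeomorphism on each of the countably many bisections. The only minor difference is how you show $h\in\overline{G^0}$: you use openness of $G^0$ (a tail of the net lies in $G^0$), whereas the paper uses continuity of multiplication and inversion, $g_ig_i^{-1}=\range(g_i)\to gy=g$. You also write out the reverse inclusion and the check $h\notin G^0$, which the paper leaves implicit.
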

\begin{proof}
Let us begin by showing that $D^0\subset  \source(\overline{G^0}\setminus G^0)$. Given $y\in D^0$, take a net $(g_i)\subset G$ such that $g_i\to y$ and $g_i\to g\neq y$. Then $g_ig_i^{-1}\to gy=g$. Hence $y=\source(g)\in \source(\overline{G^0}\setminus G^0)$. The proof of the reverse inclusion is straightforward.

The last claim follows from the fact that the restriction of $\source$ to a bisection is a homeomorphism onto its image, and $\overline{G^0}\setminus G^0$ is a closed subset of $G$.
\end{proof}

\begin{theorem}\label{thm:sum}
Let $G$ be an étale groupoid that can be covered by countably many bisections. Given $x\in G^0$, there exists $H\leq G_x^x$ and a state $\varphi$ on $\cs_{\ess}(G)$ such that $\varphi([a])=\sum_{h\in H}a(h)$ for any $a\in \cCc(G)$. In particular, $\|L_{G_x/H}(a)\| \leq \|[a]\|$.
\end{theorem}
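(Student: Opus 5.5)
Since $G$ is covered by countably many bisections, $D^0$ is meagre (by the cited lemma) and hence has dense complement, while non-dangerous units give point-evaluation states on $\cs_\ess(G)$ by Lemma \ref{point evaluation state}. The plan is to approximate $x$ by a net $(x_i)$ of non-dangerous units, take a weak$^*$ cluster point of the states $\varphi_i\colon [a]\mapsto a(x_i)$, and identify the limit. The limit will have the form $\sum_{h\in H}a(h)$, where $H$ consists of the elements of $G_x^x$ that arise as limits of the units $x_i$. In detail: choose a net $x_i\to x$ with $x_i\in G^0\setminus D^0$. By compactness of the state space of the unitization, pass to a subnet so that $\varphi_i$ converges weak$^*$ to a positive functional $\varphi$ of norm at most one.

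Define $H:=\{g\in G_x^x \mid x_i\to g \text{ along the subnet}\}$. To make this well defined, first pass to a further subnet (a universal subnet, say) so that for every $g\in G$ either $x_i\to g$ eventually, or eventually $x_i\notin V$ for some neighbourhood $V$ of $g$. Note that every $g$ with $x_i\to g$ satisfies $\range(g)=\source(g)=x$, so $g\in G_x^x$. Next show that $H$ is a subgroup. It contains $x$. If $x_i\to g$ and $x_i\to h$, then $x_i=x_ix_i^{-1}\to gh^{-1}$ by continuity of multiplication and inversion, since the relevant pairs are composable.

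Now compute $\varphi([a])$ for $a\in C_c(U)$ with $U$ a bisection. If $H\cap U=\{h\}$, then $x_i\in U$ eventually, and since $a$ is continuous on $U$ we get $a(x_i)\to a(h)$. If $H\cap U=\emptyset$, we need $a(x_i)\to 0$. Suppose not; then along a further subnet the points $x_i$ lie in $\supp a\subset U$, which is compact, and we extract a limit point $g\in\supp a$ with $x_i\to g$. This gives $g\in H\cap U$, a contradiction. The ultranet choice ensures convergence along the whole net. By linearity and Proposition \ref{prop:covera}, $\varphi([a])=\sum_{h\in H}a(h)$ for all $a\in\cCc(G)$; the sum is finite since $\supp a$ is covered by finitely many bisections, each containing at most one point of $H$.

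It remains to check that $\varphi$ is a state and to deduce the norm bound. Taking $a=f\in C_c(G^0)$ with $f=1$ near $x$, and noting $H\cap G^0=\{x\}$, gives $\varphi([f])=f(x)=1$, so $\|\varphi\|=1$. For the inequality, Lemma \ref{prop:ev} shows that $L_{G_x/H}$ is the GNS representation of the functional $a\mapsto\sum_{h\in H}a(h)$ on $\cs(G)$. That functional equals $\varphi\circ q$, where $q\colon\cs(G)\to\cs_\ess(G)$ is the quotient map, so it vanishes on $J$. Hence $L_{G_x/H}$ factors through $\cs_\ess(G)$, and therefore $\|L_{G_x/H}(a)\|\le\|[a]\|$. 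I expect the main obstacle to be the subnet/ultranet bookkeeping in the case $H\cap U=\emptyset$, where non-Hausdorffness means the limits of $(x_i)$ are not unique.
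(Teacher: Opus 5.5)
Your proposal is correct and follows the paper's proof essentially step by step. You approximate $x$ by an ultranet of non-dangerous units, take $H$ to be the set of limits of that net, verify $a(x_\lambda)\to\sum_{h\in H}a(h)$ on each $C_c(U)$ via the ultranet dichotomy on $\supp a$, and deduce the norm bound from Lemma~\ref{prop:ev}. The only differences are cosmetic: you obtain the limit functional by weak$^*$ compactness where the paper uses convergence on the dense subalgebra $\cCc(G)$, and to conclude $\|\varphi\|=1$ you should take $0\le f\le 1$.
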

\begin{proof}
Take an ultranet $(x_\lambda)\subset G^0\setminus D^0$ such that $x_\lambda \to x$. Set $H:=\{h\in G \mid x_\lambda\to h\}$. By continuity of the multiplication and inversion operations, we have that $H\leq G_x^x$ (this was also observed in \cite{Hum25}).

For each $\lambda$, let $\varphi_{\lambda}$ be the state on $\cs_{\ess}(G)$ guaranteed by Lemma \ref{point evaluation state} with $\varphi_{\lambda}([a])=a(x_\lambda)$ for any $a\in\cCc(G)$.
We claim that, given $a\in \cCc(G)$, 
\begin{equation}\label{eq:func}
a(x_\lambda)\to \sum_{h\in H}a(h).
\end{equation}
By linearity, we can assume that $a\in C_c(U)$, where $U\subset G$ is a bisection. If there exists $g\in U\cap H$, then continuity of $a$ implies that $a(x_\lambda)\to a(g)=\sum_{h\in H} a(h)$.

Suppose that the right-hand side of \eqref{eq:func} is nonzero. Then there exists $g\in H$ such that $a(g)\neq 0$, which implies that $H\cap U\neq \emptyset$, thus showing \eqref{eq:func} in this case. 

Suppose instead that the right-hand side of \eqref{eq:func} is zero. Let $K \subset U$ be the compact support of $a$. Since $(x_\lambda)$ is an ultranet, it is eventually contained in either $K$ or the complement of $K$. In the first case it converges to some $g \in K$, which implies that $g \in H \cap U$. In the second case $a(x_\lambda) = 0$ for sufficiently large $\lambda$. Both cases yield \eqref{eq:func}.

Since $\cCc(G)$ is dense in $\cs_\ess(G)$, it follows that, for every $a\in \cs_\ess(G)$, the net $\varphi_\lambda(a)$ is convergent. 

Let $\varphi=\lim_\lambda \varphi_\lambda$. Clearly, $\varphi$ is a positive linear functional on $\cs(G)$. Furthermore, given $a\in C_c(G^0)$, we have that $\varphi(a)=a(x)$. Therefore, $\varphi$ is a state.

The final inequality follows from Lemma \ref{prop:ev}.
\end{proof}

Recall that a subgroup $H$ of a group $K$ is said to be co-amenable in K if $K\act K/H$ is an amenable action. Equivalently, the quasi-regular representation of $K$ on $\ell^2(K/H)$ weakly contains the trivial representation. The proof of the following result is inspired by the argument of \cite[Proposition 2.2]{KS22}. 

\begin{lemma}\label{lem:inc}
Let $G$ be an étale groupoid, $x\in G^0$ and $H\leq G_x^x$ such that $H$ is co-amenable in $G_x^x$. Then $L_{G_x/G_x^x}\prec L_{G_x/H}$.
\end{lemma}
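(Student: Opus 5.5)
We argue through the positive functionals that Lemma \ref{prop:ev} attaches to these representations. By Lemma \ref{prop:ev}, $L_{G_x/G_x^x}$ is unitarily equivalent to the GNS representation of the state $\psi(a)=\sum_{k\in G_x^x}a(k)$. Weak containment of a GNS representation $\pi_\psi\prec\sigma$ follows once $\psi$ is a weak$^*$ limit on $\cCc(G)$ of positive functionals of the form $a\mapsto \sum_j\langle \sigma(a)\xi_j,\xi_j\rangle$. This also requires uniform boundedness, so that the norm estimate $\|\pi_\psi(a)\|\le\|\sigma(a)\|$ passes to the limit; equivalently, the kernel of $\sigma$ on $\cs(G)$ is contained in that of $\pi_\psi$. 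So the aim is to approximate $\psi$ by vector functionals of $L_{G_x/H}$.

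\textbf{Step 1 (approximating vectors).} Co-amenability of $H$ in $K:=G_x^x$ gives unit vectors $\eta_n\in\ell^2(K/H)$ that are asymptotically $K$-invariant: $\|k\eta_n-\eta_n\|\to0$ for each $k\in K$. Using finitely supported vectors, view $\eta_n$ as vectors in $\ell^2(G_x/H)$ through the inclusion $K/H\subset G_x/H$.

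\textbf{Step 2 (computing the vector functional).} By \eqref{eq:lambda}, for $a\in\cCc(G)$ and $k_1H,k_2H\in K/H$,
\[
\langle L_{G_x/H}(a)\delta_{k_1H},\delta_{k_2H}\rangle=\sum_{h\in H}a(k_2hk_1^{-1}).
\]
These entries involve only values of $a$ on $K$, so
\[
\langle L_{G_x/H}(a)\eta_n,\eta_n\rangle=\sum_{k\in K}a(k)\,\langle k\eta_n,\eta_n\rangle,
\]
where $K$ acts on $\ell^2(K/H)$ by the quasi-regular representation. The sum is finite, because for fixed $a$ only finitely many $k\in K$ have $a(k)\neq 0$: finitely many bisections cover the support, and each bisection meets $G_x$ in at most one point. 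Since $\langle k\eta_n,\eta_n\rangle\to1$, we get $\langle L_{G_x/H}(a)\eta_n,\eta_n\rangle\to\psi(a)$ for every $a\in\cCc(G)$.

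\textbf{Step 3 (passing to weak containment).} The states $\omega_n=\langle L_{G_x/H}(\cdot)\eta_n,\eta_n\rangle$ on $\cs(G)$ have norm at most $1$ and converge pointwise on the dense subalgebra $\cCc(G)$ to $\psi$. Hence they converge weak$^*$ on all of $\cs(G)$. A weak$^*$ limit of states of $L_{G_x/H}$ lies in the weak$^*$-closed state space of $\cs(G)/\ker L_{G_x/H}$, so $\psi$ vanishes on $\ker L_{G_x/H}$. Therefore $\ker\pi_\psi\supset\ker L_{G_x/H}$, which gives $\|L_{G_x/G_x^x}(a)\|\le\|L_{G_x/H}(a)\|$.

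The main point needing care is the computation in Step 2. One has to check that cosets $gH$ with $g\notin K$ do not contribute, which holds because $\eta_n$ is supported on $K/H$. One also has to check that the action of $K$ appearing in the formula is exactly the quasi-regular one, with the conjugation conventions matching so that $\langle k\eta,\eta\rangle$ (or its complex conjugate) appears. If the conventions produce the conjugate, nothing changes, since the limit is still $1$.
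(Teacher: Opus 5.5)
Your proof is correct and is essentially the paper's argument. Both compress $L_{G_x/H}$ to $\ell^2(G_x^x/H)$, where it becomes $\sum_{k\in G_x^x}a(k)\lambda_{G_x^x/H}(k)$, use co-amenability to show that $a\mapsto\sum_{k\in G_x^x}a(k)$ is a state factoring through $L_{G_x/H}$, and conclude with Lemma \ref{prop:ev}. The paper takes a state $\theta$ on $\cs_{\lambda_{G_x^x/H}}(G_x^x)$ with $\theta(\lambda_{G_x^x/H}(k))=1$ and composes it with $a\mapsto PaP$, where you instead take a weak$^*$ limit of almost-invariant vector states. One small fix is needed: $G_x^x$ may be uncountable, so the almost-invariant vectors must form a net rather than a sequence. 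Step 3 works verbatim for a net.
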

\begin{proof}
Let $\lambda_{G_x^x/H}$ be the quasi-regular representation of $G_x^x$ on $\ell^2(G_x^x/H)$ and $P\in B(\ell^2(G_x/H))$ be the orthogonal projection on $\ell^2(G_x^x/H)$. 

Given $U\subset G$ a bisection and $a\in C_c(U)$, a straightforward computation shows that
\[PL_{G_x/H}(a)P=\begin{cases}
a(k)\lambda_{G_x^x/H}(k),&\text{if $U\cap G_x^x=\{k\}$}\\
0,&\text{if $U\cap G_x^x=\emptyset$}
\end{cases}\]
Hence, given $a\in \cs_{\la_{G_x/H}}(G)$, we have $PaP\in \cs_{\lambda_{G_x^x/H}}(G_x^x)$.

By co-amenability, there is a state $\theta$ on $\cs_{\lambda_{G_x^x/H}}(G_x^x)$ such that $\theta(\lambda_{G_x^x/H}(g))=1$ for every $g\in G_x^x$. Let $\psi$ be the state on $\cs_{\la_{G_x/H}}(G)$ given by $\psi(a)=\theta(PaP)$, for $a\in \cs_{\la_{G_x/H}}(G)$. 

Then, given $a\in \cCc(G)$, we have that $\psi(\la_{G_x/H}(a))=\sum_{g \in G_x^x}a(g)$. By applying Lemma \ref{prop:ev}, we conclude that $L_{G_x/G_x^x}\prec L_{G_x/H}$.
\end{proof}

\begin{theorem}\label{thm:new}
Let $G$ be an étale groupoid and $X\subset G^0$ a set such that $G_x^x$ is amenable for each $x\in X$. There is a contractive positive linear map $F\colon \cs_\ess(G)\to B_\infty(X)$ such that, for every $a\in  \cCc(G)$ and $x\in X$,
\begin{equation}\label{eq:amen}
F([a])(x)=\sum_{h\in G_x^x}a(h).
\end{equation}
\end{theorem}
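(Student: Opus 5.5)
The plan is to define $F$ pointwise: for each $x\in X$, show that the functional $[a]\mapsto \sum_{h\in G_x^x}a(h)$ is a well-defined state $\varphi_x$ on $\cs_\ess(G)$. Then $F([a])(x):=\varphi_x([a])$ is automatically positive and contractive into bounded functions on $X$. Borel measurability of $F(b)$ is needed only because the target is $B_\infty(X)$. For $b=[a]$ with $a\in\cCc(G)$ it follows from Lemma \ref{lem:tecb}, restricted to $X$ (with the trace $\sigma$-algebra). For general $b$ we approximate by $[a_n]\to b$: then $F([a_n])\to F(b)$ uniformly, since each $\varphi_x$ is contractive, and uniform limits of Borel functions are Borel.

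To construct $\varphi_x$, I first reduce to the case where $G$ is covered by countably many bisections, using Remark \ref{countable remark}. Write $G=\bigcup_k H_k$ with $\cs_\ess(G)=\overline{\bigcup_k\cs_\ess(H_k)}$. For each $H_k$, the isotropy at $x$ is $(H_k)_x^x\le G_x^x$, which is a subgroup of an amenable group and hence amenable.

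In the countable case, Theorem \ref{thm:sum} gives a subgroup $H\le G_x^x$ and the inequality $\|L_{G_x/H}(a)\|\le\|[a]\|$. Since $G_x^x$ is amenable, every subgroup is co-amenable, so Lemma \ref{lem:inc} gives $\|L_{G_x/G_x^x}(a)\|\le\|L_{G_x/H}(a)\|\le\|[a]\|$. By Lemma \ref{prop:ev}, the vector state at $\delta_{G_x^x}$ satisfies $\langle L_{G_x/G_x^x}(a)\delta_{G_x^x},\delta_{G_x^x}\rangle=\sum_{h\in G_x^x}a(h)$. Hence $|\sum_{h\in G_x^x}a(h)|\le\|[a]\|$, and the same holds with $a^*a$ in place of $a$, which gives positivity. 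So $[a]\mapsto\sum_h a(h)$ extends to a state on $\cs_\ess(G)$; in particular it vanishes when $[a]=0$, so it is well defined.

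For general $G$, I apply this to each $H_k$. This gives states $\varphi_x^k$ on $\cs_\ess(H_k)$ with $\varphi^k_x([a])=\sum_{h\in (H_k)_x^x}a(h)$ for $a\in\cCc(H_k)$. Since $a$ vanishes outside $H_k$, this sum equals $\sum_{h\in G_x^x}a(h)$. The states are compatible along the inclusions $\cs_\ess(H_k)\hookrightarrow\cs_\ess(H_{k'})$, because they agree on the dense subspaces $\cCc$. They are all contractive, so they assemble into a contractive positive functional on the dense union. This extends to $\cs_\ess(G)$, and its value on $C_c(G^0)$ is evaluation at $x$.

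The main obstacle I expect is the directed-union bookkeeping. In particular, I need to check that the norm of $[a]$ in $\cs_\ess(H_k)$ agrees with its norm in $\cs_\ess(G)$, which is the injectivity of the inclusion asserted in the preliminaries. I also need to make sure that every $a\in\cCc(G)$ lies in some $\cCc(H_k)$; this holds because $a$ is supported on finitely many bisections, so I can choose the $H_k$ to contain them.
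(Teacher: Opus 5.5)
Your proposal is correct and follows essentially the same route as the paper: reduce to the countably-many-bisections case via Remark~\ref{countable remark}, combine Theorem~\ref{thm:sum} with Lemma~\ref{lem:inc} (every subgroup of an amenable group is co-amenable) to get the pointwise states, and get Borel measurability from Lemma~\ref{lem:tecb}; your directed-union and uniform-limit bookkeeping only fills in details the paper leaves implicit. One minor wording point: positivity comes from $\sum_{h}(a^*a)(h)=\|L_{G_x/G_x^x}(a)\delta_{G_x^x}\|^2\geq 0$, i.e.\ from the functional being a vector state, not from the norm bound applied to $a^*a$.
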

\begin{proof}
We may assume that $G$ can be covered by countably many bisections using Remark \ref{countable remark}, because the formula \eqref{eq:amen} is consistent with open subgroupoids.

It follows from Theorem \ref{thm:sum} and Lemma \ref{lem:inc} that there is a contractive positive linear map $F\colon \cs_\ess(G)\to \ell^{\infty}(X)$ such that \eqref{eq:amen} holds for every $a\in \cCc(G)$ and $x \in X$. Furthermore, it is a consequence of Lemma \ref{lem:tecb} that $F(\cs_\ess(G))\subset B_\infty(X)$.
\end{proof}

\begin{corollary}\label{cor:amenable isotropy}
Let $G$ be an étale groupoid and let $\mu$ be an invariant Radon measure on $G^0$ such that $G_x^x$ is amenable almost everywhere. Then there exists a trace $\varphi$ on $\cs_\ess(G)$ which extends $\mu$ and satisfies \[\varphi([f])=\int\sum_{g\in G_x^x}f(g)\, \mathrm d\mu(x)\]
	for each $f\in \cCc(G)$.
\end{corollary}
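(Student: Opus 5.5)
The plan is to build $\varphi$ on $\cs(G)$ first and then descend it to the quotient. Let $X\subset G^0$ be a Borel set of full measure on which $G_x^x$ is amenable. If the set of amenable-isotropy points is not obviously Borel, take $X$ to be that set itself and treat ``almost everywhere'' as saying its complement is $\mu$-null, so that it is $\mu$-measurable. Theorem \ref{thm:nu} already gives a trace $\tilde\varphi$ on $\cs(G)$ with $\tilde\varphi(f)=\int\sum_{g\in G_x^x}f(g)\,\mathrm d\mu(x)$ for $f\in\cCc(G)$. It restricts to $\mu$ on $C_c(G^0)$, because for $f\in C_c(G^0)$ the only nonzero term of the sum is $f(x)$. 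So it is enough to check that $\tilde\varphi$ descends along $q\colon\cs(G)\to\cs_\ess(G)$, and I will do this with Lemma \ref{lem:descend}.

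Let $F_0\colon \cs(G)\to B_\infty(G^0)$ be the contractive positive map from the proof of Theorem \ref{thm:nu}, and let $F\colon\cs_\ess(G)\to B_\infty(X)$ be the map of Theorem \ref{thm:new}. On $\cCc(G)$ we have $F(q(a))=F_0(a)|_X$, and both maps are continuous, so
\[F(q(a))=F_0(a)|_X \quad\text{for all } a\in\cs(G).\]
In particular, if $a\in J=\ker q$, then $F_0(a)$ vanishes on $X$, which is $\mu$-conull.

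Now take $a\in\Ped(\cs(G))_+\cap J$. By Lemma \ref{lem:groupoid ped} we may replace $a$ by $fcf$ with $f\in C_c(G^0)_+$ and $c\in\cs(G)_+$; this preserves membership in $J$ and the value under any trace. I then need $\tilde\varphi(fcf)=\int F_0(fcf)\,\mathrm d\mu$. This identity holds on $\cCc(G)$ by definition. To pass to general $c$, I approximate $c$ by $c_n\in\cCc(G)$ in norm, so that $fc_nf\to fcf$ with all of these supported in the hereditary subalgebra over $Y=\{f\neq0\}$. On $\overline{h\cs(G)h}$, with $h\in C_c(G^0)$ equal to $1$ on $\supp f$, the trace $\tilde\varphi$ is bounded, and $F_0$ maps into $B_\infty(\supp h)$. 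Since $\mu(\supp h)<\infty$, the map $b\mapsto\int F_0(b)\,\mathrm d\mu$ is also bounded there. Both sides are therefore continuous and agree on a dense subset, so the identity holds. Since $F_0(fcf)=0$ $\mu$-a.e., we get $\tilde\varphi(fcf)=0$, and Lemma \ref{lem:descend} produces the desired trace $\varphi$ on $\cs_\ess(G)$ with $\varphi([f])=\tilde\varphi(f)$.

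The main obstacle is making the continuity/approximation step rigorous. Concretely, I need $\tilde\varphi$ to equal $b\mapsto \int F_0(b)\,\mathrm d\mu$ on all of the hereditary subalgebra $\overline{h\cs(G)h}$, not just on $\cCc(G)$; this depends on the boundedness statement borrowed from \cite[Proposition 7.2]{GM26} as used in Theorem \ref{thm:nu}. A secondary point is measurability of $X$. If needed, I would work with a Borel $\mu$-conull subset of the amenable locus, or apply the countable-bisection reduction of Remark \ref{countable remark}.
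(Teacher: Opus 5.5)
Your proposal is correct and follows essentially the same route as the paper. You descend the trace $\tilde\varphi$ of Theorem \ref{thm:nu} via Lemma \ref{lem:descend}, reduce to elements $fcf$ via Lemma \ref{lem:groupoid ped}, and then approximate $c$ by $c_n\in\cCc(G)$ and control the limit with the map $F$ of Theorem \ref{thm:new}. The only difference is cosmetic: you first extend the formula $\tilde\varphi(b)=\int F_0(b)\,\mathrm d\mu$ to $\overline{h\cs(G)h}$ and then use $F\circ q=F_0|_X$, whereas the paper writes $\tilde\varphi(fc_nf)=\int_{X\cap K}F([fc_nf])\,\mathrm d\mu\to 0$ directly. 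The boundedness you attribute to \cite[Proposition 7.2]{GM26} is in fact automatic, since $\tilde\varphi$ is a positive functional on a $\cs$-algebra contained in the Pedersen ideal.
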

\begin{proof}
Take $X\subset G^0$ a Borel set such that $\mu(X^c)=0$ and $G_x^x$ is amenable for every $x\in X$, and let $F\colon \cs_{\ess}(G)\to B_{\infty}(X)$ be as in Theorem \ref{thm:new}. 

By Lemma \ref{lem:descend}, we need only check that the trace $\widetilde \varphi$ from Theorem \ref{thm:nu} on $\cs(G)$ satisfies $\widetilde \varphi(a) = 0$ for each $a \in \Ped(\cs(G))_+ \cap J$. By Lemma \ref{lem:groupoid ped}, we may assume that $a = fcf$ for elements $f \in C_c(G^0)_+$ and $c \in \cs(G)_+$.
Pick $c_n \in \cCc(G)$ with $c_n \to c$, so that $f c_n f \to a$ in $\overline{f \cs(G) f}$ and $[f c_n f] \to 0$. Then $\widetilde \varphi(f c_n f) \to \widetilde \varphi(a)$. But, setting $K = \supp f$, for each $n$ we have $\widetilde \varphi(f c_n f) = \int_{X \cap K} F([f c_n f]) \, \mathrm d \mu$, which converges to $0$ by Theorem \ref{thm:new}.
\end{proof}

\begin{remark}
If $G$ is an amenable étale groupoid, then every isotropy group of $G$ is amenable (\cite[Proposition 5.6]{BM26}).
\end{remark}

If $\mu$ is essentially free, the canonical trace $\tau_\mu$ descends to the essential $\cs$-algebra:

\begin{proposition}\label{prop:efe}
Let $(G,\mathcal L)$ be a twisted étale groupoid and $\mu$ an essentially free invariant Radon measure on $G^{0}$. Then $\mu$ admits a unique extension to a trace $\tau$ on $\cs_\ess(G,\mathcal L)$. Furthermore, for every $\xi\in \cCc(G,\mathcal L)$, $\tau([\xi])=\int_{G^{0}}\xi|_{G^0}\,\mathrm d\mu$.
\end{proposition}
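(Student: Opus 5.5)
Uniqueness is immediate from Theorem \ref{thm:u}. If $\sigma$ is a trace on $\cs_\ess(G,\mathcal L)$ extending $\mu$, then $\sigma\circ q$ is a trace on $\cs(G,\mathcal L)$ extending $\mu$, where $q\colon \cs(G,\mathcal L)\to\cs_\ess(G,\mathcal L)$ is the quotient map. Here we use that $q$ maps $\Ped(\cs(G,\mathcal L))$ into $\Ped(\cs_\ess(G,\mathcal L))$ and that $C_c(G^0)\cap J=\{0\}$. So $\sigma\circ q=\tau_\mu$, and $\sigma$ is determined because $q$ is surjective on Pedersen ideals. The formula $\tau([\xi])=\int \xi|_{G^0}\,\mathrm d\mu$ then follows from the definition of $\tau_\mu$.

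For existence, we show that $\tau_\mu$ descends, via Lemma \ref{lem:descend}. It suffices to show $\tau_\mu(a)=0$ for every $a\in\Ped(\cs(G,\mathcal L))_+\cap J$. By Lemma \ref{lem:groupoid ped} we may assume $a=fcf$ with $f\in C_c(G^0)_+$ and $c\in\cs(G,\mathcal L)_+$. Writing $E\colon\cs(G,\mathcal L)\to B_\infty(G^0)$ for the generalised expectation, the key identity to establish is
\[ \tau_\mu(b)=\int_K E(b)\,\mathrm d\mu \qquad\text{for all } b\in\overline{f\cs(G,\mathcal L)f}, \]
where $K=\supp f$ is compact. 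For $b=f c_n f$ with $c_n\in\cCc(G,\mathcal L)$ this is the defining formula of $\tau_\mu$. Both sides are continuous on $\overline{f\cs(G,\mathcal L)f}$: the left because that hereditary subalgebra lies in the Pedersen ideal, and the right because $E$ is contractive and $\mu(K)<\infty$. So the identity extends by density.

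It thus remains to show that $E(a)$ vanishes $\mu$-almost everywhere on $K$ whenever $a\in J_+$. Membership in $J$ gives $E(a^2)\in M(G^0)$, equivalently $E(a)\in M(G^0)$ by positivity and the Cauchy--Schwarz-type inequality $|E(a)|^2\le \|a\|E(a)$ (cf.\ \cite[Theorem 3.3.2]{M90}). So $E(a)$ vanishes off a meagre set. This is the main obstacle: meagre sets need not be $\mu$-null, so we must use essential freeness.

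The plan is to show that for $a\in J$, $E(a)$ is supported, up to a $\mu$-null set, on the set of points $x$ admitting isotropy through non-unit germs of bisections. I would first treat $\xi\in\cCc(G,\mathcal L)$ and write $\xi=\sum_i\xi_i$ with $\xi_i\in C_c(U_i,\mathcal L)$ for bisections $U_i$. Then $\xi|_{G^0}$ differs from a function continuous on $\source(U_i\cap G^0)$ only on the sets $\source(\overline{U_i\cap G^0}\cap U_i\setminus G^0)\subset \source(U_i\cap\Iso\setminus G^0)$, since closure points of units are isotropy. Let $N=\bigcup_i\source(U_i\cap\Iso\setminus G^0)$; this is $\mu$-null on $K$ by essential freeness, using that $\mu_{\mathrm{sf}}=\mu$ on subsets of the finite-measure set $K$. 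Off $N$ the function $E(\xi)$ is a finite sum of functions continuous on open sets. Hence $\tau_\mu(fb^*bf)=\int_K |\cdot|\,\mathrm d\mu$ controls $\|b\|$ in the GNS sense.

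Concretely, I would instead argue that $\tau_\mu$ restricted to $\overline{f\cs f}$ is dominated by a state built from non-dangerous-like points. For $\mu$-a.e.\ $x\in K$ outside the null set where nontrivial isotropy germs lie in $\overline{G^0}$, the point-evaluation $\cCc\ni\xi\mapsto\xi(x)$ vanishes on $J$. The argument of Lemma \ref{point evaluation state} needs only continuity of $E(\xi)$ at $x$ on the relevant dense subalgebra, and approximation then handles general $a$. After reducing via Remark \ref{countable remark} to countably many bisections, this exceptional set lies in $\source(\overline{G^0}\setminus G^0)\cap K\subset\bigcup_U \source(U\cap\Iso\setminus G^0)$, which is $\mu$-null. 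Then $E(a)(x)=0$ for a.e.\ $x\in K$, so $\tau_\mu(a)=0$ and Lemma \ref{lem:descend} applies.
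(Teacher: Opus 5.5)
Your argument is correct and is essentially the paper's. Uniqueness comes from Theorem~\ref{thm:u}. For existence you show $\tau_\mu$ vanishes on $\Ped(\cs(G,\mathcal L))_+\cap J$ (Lemma~\ref{lem:descend}): reduce to $a=fcf$, pass to an open subgroupoid $H\supset G^0$ covered by countably many bisections that contains an approximating sequence $c_n\to c$, and use that evaluation at units outside the Borel set $\source(H\cap\Iso\setminus G^0)\supset D^0_H$, which is $\mu$-null on $K$, kills $J$ (Lemma~\ref{point evaluation state}).

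The only things to tidy are that $H$ must be chosen depending on $a$ rather than as a global reduction of $G$, and that the middle paragraph about controlling $\|b\|$ ``in the GNS sense'' is superfluous and can be dropped.
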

\begin{proof}
To build $\tau$ we argue via Lemma \ref{lem:descend} that the canonical extension $\tau_\mu$ on $\cs(G,\mathcal L)$ descends to $\cs_\ess(G,\mathcal L)$. Uniqueness then follows from Theorem \ref{thm:u}.

Let $a \in \Ped(\cs(G,\mathcal L))_+ \cap J$; we must show that $\tau_\mu(a) = 0$. By Lemma \ref{lem:groupoid ped}, it suffices to assume that $a = fcf$, where $f \in C_c(G^0)_+$ and $c \in \cs(G,\mathcal L)_+$. Pick $c_n \in \cCc(G, \mathcal L)$ with $c_n \to c$, and let $H \subset G$ be an open subgroupoid containing $G^0$ which is covered by countably many bisections such that $c_n \in \cCc(H,\mathcal L)$ for each $n$. 

Let $Y := \source(H \cap \Iso \setminus G^0)$, which is Borel, contains the set $D^0_H$ of units which are dangerous with respect to $H$ and satisfies $\mu_{\mathrm{sf}}(Y) = 0$ by essential freeness. Set $K := \supp f$. By Lemma \ref{point evaluation state} we obtain a contractive positive linear map $P \colon \cs_\ess(H,\mathcal L) \to B_\infty(K \setminus Y)$ such that $P([b]) = b|_{K \setminus Y}$ for each $b \in \cCc(H,\mathcal L)$. Since $f c_n f \to a$ in $\overline{f \cs(G,\mathcal L)f}$ we have $\tau_\mu(f c_n f) \to \tau_\mu(a)$. However, since $\mu(K \cap Y) = 0$ and $[f c_n f] \to 0$, then we also have \[\tau_\mu(f c_n f) = \int_{G^0} (f c_n f)|_{G^0} \, \mathrm d \mu =\int_K(fc_nf)|_K\,\mathrm d\mu= \int_{K \setminus Y} P([f c_n f]) \, \mathrm d \mu \to 0. \qedhere\] 
\end{proof}

\begin{remark}
For existence in Proposition \ref{prop:efe}, instead of assuming that $\mu$ is essentially free, we could have assumed the weaker condition that $\mu_{\mathrm{sf}}(D^0_H)=0$ for any open subgroupoid $H \subset G$ which contains $G^0$ and is covered by countably many bisections (by Proposition \ref{prop:lit}, $D^0_H$ is a Borel set). 
\end{remark}

\begin{remark}[See Proposition 3.14 in \cite{KMP}] 
Let $(G,\mathcal L)$ be a twisted \'etale groupoid and let $\mu$ be an invariant Radon measure on $G^0$ with full support. If the canonical trace $\tau_\mu$ descends to $\cs_\ess(G,\mathcal L)$, then (the descent of) $\tau_\mu$ is faithful on $\cs_\ess(G,\mathcal L)$. Indeed, let $a \in \Ped(\cs(G,\mathcal L))_+$ satisfy $\tau_\mu(a) = 0$; we will show that $a \in J$. By Lemma \ref{lem:groupoid ped} it suffices to show that $fcf \in J$ for $f \in C_c(G^0)_+$ and $c \in \cs(G,\mathcal L)_+$ with $\tau_\mu(fcf) = 0$. Since $\tau_\mu(fcf) = \int E(fcf) \, \mathrm d \mu$, the set $\{x \in G^0 \mid E(fcf) > 0 \}$ must have measure zero and hence empty interior. Thus by \cite[Proposition 4.6]{BKM} we conclude $fcf \in J$.

Alternatively, if $G$ is topologically free then any trace $\tau$ on $\cs_\ess(G,\mathcal L)$ extending $\mu$ is faithful. This follows from \cite[Theorem 7.25]{KM21}.
\end{remark}

For a $\cs$-algebra $A$ we topologise the space $T_+(A)$ of (positive) traces with the weak topology with respect to $\Ped(A)$. Similarly, for an \'etale groupoid $G$ the space $\Rad(G)$ of invariant Radon measures on $G^0$ carries the weak topology with respect to $C_c(G^0)$. Both spaces moreover have the structure of positive cones.

\begin{corollary}\label{cor:trace space}
Let $(G,\mathcal L)$ be a twisted \'etale groupoid such that $G$ is essentially free with respect to every invariant Radon measure on $G^0$. 
Then the restriction map \[\Phi \colon T_{+}(\cs_\ess(G,\mathcal L)) \to \Rad(G)\] induced by the inclusion $C_c(G^0) \subset \Ped(\cs_\ess(G,\mathcal L))$ is an affine homeomorphism.
\end{corollary}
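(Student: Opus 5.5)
The plan is to check in turn that $\Phi$ is well defined, affine, continuous, bijective, and has continuous inverse. For well-definedness we first note that $C_c(G^0)\subset\Ped(\cs_\ess(G,\mathcal L))$. This follows because $\cCc(G,\mathcal L)\subset\Ped(\cs(G,\mathcal L))$ by Proposition \ref{prop:inv}, and the quotient map sends $\Ped(\cs(G,\mathcal L))$ onto $\Ped(\cs_\ess(G,\mathcal L))$, as in the proof of Lemma \ref{lem:descend}. Given a trace $\sigma$ on $\cs_\ess(G,\mathcal L)$, the composition $\tau=\sigma\circ q$ with the quotient map $q$ is a trace on $\cs(G,\mathcal L)$. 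Since $C_c(G^0)\cap J=\{0\}$, the restriction of $\tau$ to $C_c(G^0)$ is exactly $\sigma|_{C_c(G^0)}$, and by Proposition \ref{prop:inv} this restriction is an invariant Radon measure. Hence $\Phi$ lands in $\Rad(G)$. It is clearly additive and positively homogeneous, so affine. It is continuous because the weak topologies are given by evaluation, and $C_c(G^0)\subset\Ped$.

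For bijectivity we use the hypothesis that every invariant Radon measure is essentially free. Surjectivity is given by Proposition \ref{prop:efe}, which for each $\mu\in\Rad(G)$ provides a trace $\tau$ on $\cs_\ess(G,\mathcal L)$ extending $\mu$ with $\tau([\xi])=\int\xi|_{G^0}\,\mathrm d\mu$. For injectivity, suppose $\sigma_1,\sigma_2$ have the same image $\mu$. Then $\sigma_i\circ q$ are traces on $\cs(G,\mathcal L)$ extending $\mu$, so Theorem \ref{thm:u} gives $\sigma_1\circ q=\sigma_2\circ q=\tau_\mu$. Since $q$ maps $\Ped(\cs(G,\mathcal L))$ onto $\Ped(\cs_\ess(G,\mathcal L))$, we conclude $\sigma_1=\sigma_2$.

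The main obstacle is continuity of $\Phi^{-1}$, that is, showing that $\mu_i\to\mu$ weakly on $C_c(G^0)$ implies $\sigma_{\mu_i}(b)\to\sigma_\mu(b)$ for every $b\in\Ped(\cs_\ess(G,\mathcal L))$. For $b=[\xi]$ with $\xi\in\cCc(G,\mathcal L)$ the restriction $\xi|_{G^0}$ is only Borel, not continuous, so weak convergence does not apply directly. We get around this by reducing to $\xi\in C_c(U,\mathcal L)$ for a trivialisable bisection $U$ and using Theorem \ref{thm:u} more cleverly. Any trace extending $\mu$ equals $\tau_\mu$, and the value $\tau_\mu(a)$ for $a=\Phi_U(f)$ can be rewritten as $\tau_\mu(ah)$ for $h\in C_c(G^0)$. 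Instead, we plan to use the formula $\tau_\mu(a)=\int_{U\cap G^0}tf\,\mathrm d\mu$. Since $\mu(\source(U\cap\Iso\setminus G^0))$ is $\mu_{\mathrm{sf}}$-null, we want to approximate the indicator of $U\cap G^0$. This is delicate, so as a cleaner alternative we work at the level of $\Ped$. A general $b\in\Ped(\cs_\ess)_+$ lies, after Lemma \ref{lem:groupoid ped}, in a corner $\overline{f\cs_\ess f}$ with $f\in C_c(G^0)_+$. On such a corner every $\sigma_\mu$ has norm at most $\mu(\supp f)$-type bounds, which are uniformly bounded along a tail of the net, using $\mu_i(h)\to\mu(h)$ for $h\ge 1$ on $\supp f$. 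The traces are therefore equicontinuous there. This reduces the problem to a dense subset.

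We then run a compactness argument. Take any subnet of $(\sigma_{\mu_i}|_{\overline{f\cs_\ess f}})$. By boundedness and weak$^*$ compactness on each corner, it has a further subnet converging pointwise on $\Ped$ to a positive tracial functional $\sigma$. This limit $\sigma$ restricts on $C_c(G^0)$ to $\mu$. By injectivity (uniqueness from Theorem \ref{thm:u}, applied through $\sigma\circ q$), we get $\sigma=\sigma_\mu$. Hence the whole net converges to $\sigma_\mu$, giving continuity of $\Phi^{-1}$. The technical point to verify carefully is that the pointwise limit on $\Ped$ is a genuine trace, namely that it is defined consistently across corners. This follows since the corners are directed by $f$ and the approximate unit lies in $C_c(G^0)$.
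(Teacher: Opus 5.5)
Your proposal is correct and takes essentially the same route as the paper. Bijectivity comes from Theorem \ref{thm:u} and Proposition \ref{prop:efe}, and continuity of $\Phi^{-1}$ comes from Lemma \ref{lem:groupoid ped} (giving the eventual bound $\sigma_{\mu_i}(fcf)\le\|c\|\int f^2\,\mathrm d\mu_i$), a compactness argument producing a pointwise limit on $\Ped$, and uniqueness of extensions to identify that limit with $\sigma_\mu$. The paper passes directly to an ultranet, so each eventually bounded scalar net converges outright and your concern about consistency across corners never arises. Note that Lemma \ref{lem:groupoid ped} does not place $b$ itself in a corner; it only supplies an element $fcf$ with the same trace values, which is all you need. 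The detour about approximating indicators of $U\cap G^0$ can simply be dropped.
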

\begin{proof}
The continuous affine function $\Phi$ is bijective by Theorem \ref{thm:u} and Proposition \ref{prop:efe}. 
Let us argue that $\Phi^{-1}$ is continuous. 

Set $A = \cs_\ess(G,\mathcal L)$ and let $\mu_i \to \mu$ be a convergent ultranet of invariant Radon measures on $G^0$, and consider the traces $\tau_i := \Phi^{-1}(\mu_i)$ and $\tau := \Phi^{-1}(\mu)$ on $A$. Our aim is to show that $\tau_i \to \tau$ in $T_+(A)$. 

Let $a \in \Ped(A)$ and consider the ultranet $(\tau_i(a))$; we claim it is bounded for sufficiently large $i$. To this end it suffices to assume $a$ is positive, and by Lemma \ref{lem:groupoid ped} we may moreover assume that $a = fcf$ for elements $f \in C_c(G^0)_+$ and $c \in A_+$.
The net $(\tau_i(fcf))$ is eventually bounded because $\tau_i(fcf) \leq \|c\| \tau_i(f^2) = \|c\| \int f^2 \, \mathrm d \mu_i \to \|c\| \int f^2 \, \mathrm d \mu$.

As the ultranet $(\tau_i(a))$ is eventually bounded for each $a \in \Ped(A)$, it must converge to some number $\sigma(a)$. The resulting functional $\sigma \colon \Ped(A) \to \mathbb C$ is a positive linear tracial functional. Moreover, $\tau_i \to \sigma$ by construction. The trace $\sigma$ extends $\mu$, thus by uniqueness of extensions (Theorem \ref{thm:u}) we deduce $\tau_i \to \tau$. 
\end{proof}

\begin{remark}
In the above corollary we could instead have considered any intermediate quotient $\cs$-algebra $\cs(G,\mathcal L) \twoheadrightarrow A \twoheadrightarrow \cs_\ess(G,\mathcal L)$.
\end{remark}

\begin{remark}
A \emph{self-adjoint trace} on a $\cs$-algebra $A$ is a functional on $\Ped(A)$ that can be expressed as the difference between two (positive) traces. The space $T_{\mathbb R}(A)$ of self-adjoint traces is studied in \cite{GM26}, where it is considered as a real topological vector space with an order. It is given the weak topology with respect to $\Ped(A)$. 
The space $T_{\mathbb R}(A)$ contains a priori more information than $T_+(A)$, but can be recovered from $T_+(A)$ for a large class of $\cs$-algebras (see \cite[Question A, Corollary 6.5]{GM26}).
For Hausdorff $G$, the conclusion in Corollary \ref{cor:trace space} can be upgraded to an identification $T_{\mathbb R}(\cs_r(G,\mathcal L)) \cong \Rad_{\mathbb R}(G)$ by \cite[Corollary 7.9]{GM26}, where the space $\Rad_{\mathbb R}(G)$ of invariant real Radon measures is taken with the weak topology with respect to $C_c(G^0)$.
\end{remark}

Recall that a \emph{group bundle} is a groupoid $G$ such that $G=\Iso$. This class of groupoids has been used many times in the literature as a source of counterexamples (see \cite[Section 4]{MS26} for a recent instance).
\begin{proposition}
Let $(G,\cL)$ be a twisted étale group bundle. Given a Radon probability measure $\mu$ on $G^0$, there exists a tracial state on $\cs_\ess(G,\mathcal L)$ that extends $\mu$.
\end{proposition}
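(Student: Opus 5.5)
Since $G=\Iso$ is a group bundle, every bisection $U$ induces the identity map $\theta_U$ on $\source(U)$, so every Radon measure on $G^0$ is automatically invariant. The plan is to build the tracial state on $\cs_\ess(G,\cL)$ from the point-evaluation states at non-dangerous units (Lemma \ref{point evaluation state}) and then average them. These states are tracial for a group bundle.

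\textbf{Traciality of point evaluations.} For a non-dangerous unit $y$, the state $\varphi_y([a])=a(y)$ is tracial on $\cCc(G,\cL)$. Indeed, for $a,b\in \cCc(G,\cL)$ we have $(ab)(y)=\sum_{g\in G_y^y} a(g^{-1})b(g)$, since every arrow lies in isotropy. Likewise $(ba)(y)=\sum_{g} b(g)a(g^{-1})$, using $\source(g)=\range(g)=y$. The two terms agree because multiplication $\cL_{g^{-1}}\times\cL_g\to\cL_y\cong\C$ and $\cL_g\times\cL_{g^{-1}}\to\cL_y$ give the same scalar. To see this, write $a(g^{-1})=\alpha u^*$ and $b(g)=\beta u$ for a unit vector $u\in\cL_g$. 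Then $u^*u$ and $uu^*$ are both positive elements of norm one in $\cL_y$, hence both equal $1_y$. By continuity, $\varphi_y$ is a bounded tracial state on $\cs_\ess(G,\cL)$.

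\textbf{Reduction to countably many bisections.} By Remark \ref{countable remark} we may first reduce to the case where $G$ is covered by countably many bisections. Then $D^0$ is meagre and Borel (Proposition \ref{prop:lit}). We would like to set $\tau([a])=\int \varphi_x([a])\,\mathrm d\mu(x)$, but $D^0$ may have positive measure. So we instead define $\tau$ as a weak$^*$ cluster point of averages of point evaluations.

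\textbf{Cluster-point construction.} Approximate $\mu$ weak$^*$ on $C_0(G^0)$ by finitely supported probability measures $\mu_\alpha=\sum_j t_j\delta_{y_j}$ with all $y_j\in G^0\setminus D^0$. This is possible because $G^0\setminus D^0$ is dense and finitely supported measures are weak$^*$ dense among Radon probability measures. Set $\tau_\alpha=\sum_j t_j\varphi_{y_j}$, which is a tracial state on $\cs_\ess(G,\cL)$. Take a weak$^*$ cluster point $\tau$ in the unit ball of the dual of $\cs_\ess(G,\cL)$. Then $\tau$ is positive, tracial, and on $C_c(G^0)$ it satisfies $\tau(f)=\lim\int f\,\mathrm d\mu_\alpha=\int f\,\mathrm d\mu$. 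Hence $\tau$ extends $\mu$, and $\|\tau\|=\mu(G^0)=1$ by the Remark following Proposition \ref{prop:inv}. Once the point evaluations are available on the essential algebra, this avoids any measurability issue entirely.

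\textbf{Main obstacle.} The delicate point is the reduction of Remark \ref{countable remark}. The states $\tau^{(k)}$ obtained on each $\cs_\ess(H_k)$ need to be made compatible. One handles this with another compactness argument: choose $\tau^{(k)}$ on $\cs_\ess(H_k)$, extend each to a state on $\cs_\ess(G)$ by Hahn--Banach, and take a weak$^*$ cluster point. Traciality and the extension property persist on the dense union $\bigcup_k\cs_\ess(H_k)$. Here the directed family can be indexed by countable subsets, so the cluster point gives a tracial state on all of $\cs_\ess(G,\cL)$ extending $\mu$.
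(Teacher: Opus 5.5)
Your proposal is correct and follows essentially the same route as the paper. In the countably-covered case you use tracial point-evaluation states at non-dangerous units, approximate $\mu$ weak$^*$ by convex combinations of point masses at such units, and take a cluster point; in general you take Hahn--Banach extensions and another cluster point over the directed union $G=\bigcup_k H_k$. The only cosmetic difference is that you check traciality of $a\mapsto a(y)$ directly via $u^*u=uu^*=1_y$ in $\cL_y$, whereas the paper notes that $\delta_y$ is invariant and that its canonical extension descends to $\cs_\ess(G,\cL)$ by Lemma \ref{point evaluation state}.
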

\begin{proof}

Suppose first that $G$ can be covered by countably many bisections. Given $x \in G^0\setminus D^0$, the point mass $\delta_x$ is invariant, and Lemma \ref{point evaluation state} says that the canonical extension of $\delta_x$ to a tracial state on $\cs(G,\mathcal L)$ descends to a (tracial) state $\tau_x$ on $\cs_\ess(G,\mathcal L)$.

Since $G^0\setminus D^0$ is dense in $G^0$, the probability measure $\mu$ is in the closed convex hull of the point masses on $G^0\setminus D^0$. 
We conclude that there is a net $(\tau_i)$ of tracial states on $\cs_\ess(G,\mathcal L)$ such that, for every $f\in C_c(G^0)$, $\tau_i(f)\to \int f\,\mathrm d\mu$. By taking the limit of a convergent subnet of $(\tau_i)_i$ in the compact set of contractive traces, we obtain a trace on $\cs_\ess(G,\mathcal L)$ that extends $\mu$.

In the general case, write $G=\bigcup_k{H_k}$ as a directed union of open subgroupoids that contain $G^0$ and can be covered by countably many bisections. For each $k$, there is a state $\sigma_k$ on $\cs_{\ess}(G)$ such that $\sigma_k|_{\cs_{\ess}(H_k)}$ is a tracial state that extends $\mu$. By taking a limit point of $(\sigma_k)_k$, the conclusion follows.
\end{proof}

\section{Finite-state self-similar groups}\label{sec:ssg}
Let $X$ be a finite set such that $|X|\geq 2$. Let $X^*$ be the set of finite words on $X$, including the empty word, with the structure of rooted tree, in which two words are connected by an edge if and only if they are of the form $w$ and $wx$, for $w\in X^*$ and $x\in X$.

Let $\varphi\colon X^*\to X^*$ be an automorphism. Given $w\in X^*$, let $\varphi|_w\colon X^*\to X^*$ be the automorphism such that, for each $v\in X^*$, it holds that $\varphi(wv)=\varphi(w)\varphi|_w(v)$. We say that $\varphi$ is \emph{finite-state} if $\{\varphi|_w \mid w\in X^*\}$ is finite.

Note that $\varphi$ induces a homeomorphism (which we denote by $\varphi$ as well) $\varphi\colon X^\N\to X^\N$, such that $\varphi(w)_{[1,n]}=\varphi(w_{[1,n]})$, for any $w\in X^\N$ and $n\in \N$. We denote by $\mu$ the Bernoulli measure on $X^\N$.

The following result generalizes \cite[Theorem 4.2]{KSS06}.
\begin{theorem}\label{thm:gen}
Let $\varphi\colon X^*\to X^*$ be a finite-state automorphism. Then \[\mu(\Fix_\varphi\setminus\interior\Fix_\varphi)=0.\]
\end{theorem}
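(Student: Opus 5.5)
The plan is to express $\Fix_\varphi\setminus\interior\Fix_\varphi$ as a decreasing intersection of finite unions of cylinder sets, and then show that their Bernoulli measures decay geometrically. Finite-state-ness is what makes the decay uniform. Write $S:=\{\varphi|_w \mid w\in X^*\}$, which is finite by assumption. For $n\in\N$ let $W_n$ be the set of words $w$ of length $n$ with $\varphi(w)=w$ and $\varphi|_w\neq\mathrm{id}$. Set $B_n:=\bigsqcup_{w\in W_n} wX^\N$.

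\textbf{Step 1: $\Fix_\varphi\setminus\interior\Fix_\varphi\subset B_n$ for every $n$.} Let $x\in\Fix_\varphi$ and $w=x_{[1,n]}$. Since $\varphi(x)_{[1,n]}=\varphi(w)$, we get $\varphi(w)=w$. If $\varphi|_w=\mathrm{id}$, then $\varphi(wv)=w\varphi|_w(v)=wv$ for every $v$ (on finite words, hence on infinite words). So the open cylinder $wX^\N$ lies in $\Fix_\varphi$, and $x\in\interior\Fix_\varphi$. Hence a non-interior fixed point has $w\in W_n$ for all $n$. Also, if $wu\in W_{n+m}$, then $\varphi(w)=w$ and $\varphi|_w\neq \mathrm{id}$, since $\varphi|_{wu}=(\varphi|_w)|_u$ and the restriction of the identity is the identity. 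Therefore the sets $B_n$ are decreasing. It thus suffices to show $\mu(B_n)\to 0$.

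\textbf{Step 2: uniform escape length.} Each nontrivial $\psi\in S$ moves some finite word $v_\psi$; an automorphism of the rooted tree fixing all finite words is the identity. Let $N:=\max\{|v_\psi| \mid \psi\in S\setminus\{\mathrm{id}\}\}$, which is finite because $S$ is finite. By padding $v_\psi$ we may assume $|v_\psi|=N$: if $\psi(v)\neq v$, then $\psi(vu)\neq vu$ for any extension $vu$, since $\psi$ preserves lengths and prefixes.

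\textbf{Step 3: geometric decay.} Take $w\in W_n$ and $\psi=\varphi|_w\neq\mathrm{id}$. Then $\varphi(wv_\psi)=w\psi(v_\psi)\neq wv_\psi$, so the extension $wv_\psi$ is not in $W_{n+N}$, nor is any longer extension of it. Among the $|X|^N$ extensions of $w$ of length $n+N$, at most $|X|^N-1$ lie in $W_{n+N}$. Since $\mu$ is the Bernoulli measure, this gives
\[\mu(B_{n+N})\le (1-|X|^{-N})\,\mu(B_n).\]
Iterating, $\mu(B_{kN})\le(1-|X|^{-N})^k\to 0$. Combined with Step 1, this yields $\mu(\Fix_\varphi\setminus\interior\Fix_\varphi)=0$.

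The only step requiring any care is Step 2, namely getting a bound on the escape length that does not depend on $w$. This is exactly where the finite-state hypothesis enters. Without it, the lengths $|v_\psi|$ could be unbounded and the decay factor could tend to $1$.
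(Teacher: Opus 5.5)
Your proof is correct and takes essentially the same route as the paper. The paper defines $P_k=\{w\in X^k \mid \varphi(w)=w,\ \varphi|_w\neq e\}$, uses finite-stateness to get a uniform length $p$ at which every nontrivial section $\varphi|_w$ moves some word, and proves $|P_{pk}|\le(|X|^p-1)^k$ by induction; this is your inequality $\mu(B_{n+N})\le(1-|X|^{-N})\mu(B_n)$ up to normalising by $|X|^n$, and your inclusion into the decreasing sets $B_n$ is all that is needed (the paper states the equality $\Fix_\varphi\setminus\interior\Fix_\varphi=\bigcap_k F_k$).
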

\begin{proof}
Given $k\geq 0$, let $P_k:=\{w\in X^k \mid \varphi(w)=w, \, \varphi|_w\neq e\}$ and $F_k:=\bigcup_{w\in P_k}wX^\N$. It is not difficult to see that $(F_k)_k$ is a decreasing sequence and that $\Fix_\varphi\setminus\interior\Fix_\varphi=\bigcap_k F_k$. Therefore, 
\begin{equation}\label{eq:fix}
\mu(\Fix_\varphi\setminus\interior\Fix_\varphi)=\lim_{k\to\infty}\frac{|P_k|}{|X|^k}.
\end{equation}

Since $\varphi$ is finite-state, there exists $p>0$ such that, for every $w\in X^*$ such that $\varphi|_w\neq e$, there exists $u\in X^p$ such that $\varphi|_w(u)\neq u$.

We claim that, for $k\geq 0$, $|P_{pk}|\leq (|X|^p-1)^k$. For $k=0$, this is obvious. Suppose it holds for $k-1$, with $k\geq 1$ fixed. 

Note that $P_{pk}\subset \{(w,u)\in P_{p(k-1)}\times X^p \mid \varphi|_w(u)=u\}$. Moreover, by the choice of $p$, given $w\in P_{p(k-1)}$, there exists $u\in X^p$ such that $\varphi|_w(u)\neq u$. Thus $|P_{pk}|\leq |P_{p(k-1)}|(|X^p|-1)\leq (|X|^p-1)^k$, by the induction hypothesis. This shows the claim.

Together with \eqref{eq:fix}, this concludes the proof.
\end{proof}

A \emph{faithful self-similar group action} $(\G,X)$ is a faithful action of a group $\G$ by automorphisms on the tree $X^*$ of a finite set $X$ such that $g|_v\in \G$ for every $g\in \G$ and $v \in X^*$. Then $(\G,X)$ is said to be \emph{finite-state} if each $g\in \G$ is finite-state. See the book \cite{NekraSSGbook} for a wealth of examples.

Let $(\G,X)$ be a faithful self-similar group. Given $u,v\in X^*$ such that $|u|=|v|$ and $g\in\G$, let $g_{u,v}\colon vX^\N\to uX^\N$ be given by $g_{u,v}(vw)=ug(w)$, for $w\in X^\N$. Let $K(\G,X)$ be the groupoid of germs associated with the pseudogroup $\{g_{u,v} \mid g\in \G, \, u,v\in X^*, \, |u|=|v|\}$. This groupoid was introduced in \cite[Section 5.3]{Nek09} as a model for the gauge-invariant algebra of $(\G,X)$. The Bernoulli measure $\mu$ on $X^\N$ is the only $K(\G,X)$-invariant probability measure. 

The following result follows easily from Proposition \ref{prop:pg} and Theorems \ref{thm:gen} and \ref{thm:u}. It was first shown by Yoshida (see \cite[Proposition 2.16 and Theorem 3.12]{Y19}) using a different technique for uniqueness of the tracial state, and assuming that the self-similar group is contracting, but his proof also works in this generality.

\begin{corollary}\label{cor:fssg}
Let $(\G,X)$ be a finite-state faithful self-similar group. Then the Bernoulli measure $\mu$ is essentially free with respect to $K(\G,X)$ and $\cs(K(\G,X))$ admits a unique tracial state.
\end{corollary}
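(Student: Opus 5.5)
The plan is to apply Proposition \ref{prop:pg} to the pseudogroup $S=\{g_{u,v} \mid g\in\G,\ u,v\in X^*,\ |u|=|v|\}$ on $X^\N$, whose groupoid of germs is $K(\G,X)$. Because $\mu$ is a probability measure it is finite, hence semifinite, so $\mu_{\mathrm{sf}}=\mu$. By Proposition \ref{prop:pg}, essential freeness then reduces to showing $\mu(\Fix_s\setminus\interior\Fix_s)=0$ for every $s=g_{u,v}\in S$.

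If $u\neq v$, then the cylinders $vX^\N$ and $uX^\N$ are disjoint. In that case no point of $vX^\N$ is fixed, so $\Fix_s=\emptyset$. If $u=v$, then $s(uw)=ug(w)$, and so $\Fix_s=u\Fix_g$, where $\Fix_g\subset X^\N$ is the fixed-point set of $g$. The map $w\mapsto uw$ is a homeomorphism of $X^\N$ onto the open cylinder $uX^\N$, so it carries interiors to interiors:
\[
\Fix_s\setminus\interior\Fix_s=u(\Fix_g\setminus\interior\Fix_g).
\]
This map also scales $\mu$ by $|X|^{-|u|}$. Since $g$ is finite-state, Theorem \ref{thm:gen} gives $\mu(\Fix_g\setminus\interior\Fix_g)=0$, and hence $\mu(\Fix_s\setminus\interior\Fix_s)=0$. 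This proves essential freeness. Invariance of $\mu$ under $K(\G,X)$ is taken as given in the preceding paragraph of the paper.

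For the trace statement, the unit space $X^\N$ is compact, so $\cs(K(\G,X))$ is unital. Any tracial state $\tau$ on it restricts, by Proposition \ref{prop:inv}, to an invariant Radon probability measure on $X^\N$. Since $\mu$ is the only $K(\G,X)$-invariant probability measure, this restriction must be $\mu$. Because $\mu$ is essentially free, Theorem \ref{thm:u} shows that the canonical extension $\tau_\mu$ is the only trace extending $\mu$, so $\tau=\tau_\mu$. Existence is provided by $\tau_\mu$ itself, which is a state because $\tau_\mu(1)=\mu(X^\N)=1$.

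The only point needing care is the case reduction $u=v$ versus $u\neq v$, together with the check that the interior behaves correctly under the cylinder embedding. Neither is a real obstacle, since Theorem \ref{thm:gen} carries the substantive measure estimate.
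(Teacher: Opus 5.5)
Your proposal is correct and follows essentially the same route as the paper. You reduce essential freeness via Proposition \ref{prop:pg} to the cases $u\neq v$ (empty fixed set) and $u=v$ (rescaling by $|X|^{-|u|}$ and Theorem \ref{thm:gen}), then obtain the unique tracial state from the uniqueness of the invariant probability measure combined with Theorem \ref{thm:u}. Compared with the paper's terse version, you additionally spell out the interior-under-cylinder-embedding check and the restriction of tracial states via Proposition \ref{prop:inv}.
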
 
\begin{proof}
It suffices by Proposition \ref{prop:pg} to show that $\mu(\Fix_{g_{u,v}}\setminus\interior\Fix_{g_{u,v}})=0$ for any $u,v \in X^*$ with $|u| = |v|$ and $g \in \G$. If $u \ne v$, then $\Fix_{g_{u,v}} = \emptyset$. When $u = v$, we have $\mu(\Fix_{g_{u,u}}\setminus\interior\Fix_{g_{u,u}}) = |X|^{-|u|} \mu(\Fix_g\setminus\interior\Fix_g)$, which vanishes by Theorem \ref{thm:gen}. As $\mu$ is the unique invariant Radon probability measure on $X^{\mathbb N}$, its unique extension to a tracial state on $\cs(K(\G,X))$ guaranteed by Theorem \ref{thm:u} is the unique tracial state.
\end{proof}
For the essential $\cs$-algebra, we have the following consequence of Proposition \ref{prop:efe} and Corollary \ref{cor:fssg} (see also \cite[Theorem 3.12]{Y19}).
\begin{corollary}
Let $(\G,X)$ be a finite-state faithful self-similar group. Then the $\cs$-algebra $\cs_\ess(K(\G,X))$ admits a unique tracial state.
\end{corollary}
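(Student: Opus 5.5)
Corollary \ref{cor:fssg} already shows that the Bernoulli measure $\mu$ is essentially free with respect to $K(\G,X)$ and that it is the unique $K(\G,X)$-invariant Radon probability measure on $X^\N$. The plan is therefore to prove existence and uniqueness of a tracial state on $\cs_\ess(K(\G,X))$ separately.

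\emph{Existence.} Since $X^\N$ is compact, $\cs_\ess(K(\G,X))$ is unital, so traces are bounded and tracial states are exactly traces of norm one. Proposition \ref{prop:efe}, applied to the essentially free invariant measure $\mu$, gives a trace $\tau$ on $\cs_\ess(K(\G,X))$ extending $\mu$. By the remark following Proposition \ref{prop:inv}, its norm is $\mu(X^\N)=1$, so $\tau$ is a tracial state.

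\emph{Uniqueness.} Let $\sigma$ be any tracial state on $\cs_\ess(K(\G,X))$ and let $q\colon \cs(K(\G,X))\to\cs_\ess(K(\G,X))$ be the quotient map. Then $\sigma\circ q$ is a tracial state on the full algebra. By Proposition \ref{prop:inv}, its restriction to $C(X^\N)$ is an invariant Radon measure $\nu$. Since $C_c(G^0)\cap J=\{0\}$, we have $q|_{C(X^\N)}$ isometric, and $q(1)=1$, so $\nu$ is a probability measure. Uniqueness of the invariant probability measure forces $\nu=\mu$. Then Theorem \ref{thm:u} (equivalently, the uniqueness part of Corollary \ref{cor:fssg}) gives $\sigma\circ q=\tau_\mu=\tau\circ q$. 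Because $q$ is surjective, $\sigma=\tau$.

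There is no serious obstacle here. The only points needing care are that tracial states on the unital essential algebra correspond to traces extending a probability measure, and that pulling back along $q$ preserves the tracial-state property, which holds because $q$ is a unital surjective $*$-homomorphism.
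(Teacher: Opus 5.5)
Your proposal is correct and follows the paper's route: the paper obtains this corollary directly from Proposition \ref{prop:efe} together with Corollary \ref{cor:fssg} and the fact that $\mu$ is the only invariant probability measure. Proposition \ref{prop:efe} supplies both existence and uniqueness of the extension of $\mu$ to $\cs_\ess$, and its uniqueness part comes from Theorem \ref{thm:u} exactly as you unpack it. Two small points: the uniqueness of the invariant probability measure is stated in the text preceding Corollary \ref{cor:fssg} rather than proved by it, and you do not need $q|_{C(X^\N)}$ to be isometric, since $q(1)=1$ already makes $\nu$ a probability measure.
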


% \bibliography{bib}

\begin{bibdiv}
\begin{biblist}



\bib{BKM}{article}{
      author={Bardadyn, Krzysztof},
      author={Kwa{\'s}niewski, Bartosz},
      author={McKee, Andrew},
       title={Banach algebras associated to twisted \'etale groupoids: simplicity and pure infiniteness},
        date={2024},
     journal={arXiv:2406.05717},
}

\bib{BKKO}{article}{
      author={Breuillard, Emmanuel},
      author={Kalantar, Mehrdad},
      author={Kennedy, Matthew},
      author={Ozawa, Narutaka},
       title={{$\mathrm{C}^\ast$}-simplicity and the unique trace property for discrete groups},
        date={2017},
     journal={Publ. Math. Inst. Hautes \'Etudes Sci.},
      volume={126},
       pages={35\ndash 71},
}

% \bib{BM25}{article}{
%       author={Buss, Alcides},
%       author={Mart\'inez, Diego},
%        title={Essential groupoid amenability and nuclearity of groupoid {{\(\mathrm{C}^\ast\)}}-algebras},
%         date={2025},
%      journal={arXiv:2501.01775},
% }

\bib{BM26}{article}{
   author={Buss, Alcides},
   author={Mart\'inez, Diego},
   title={Essential groupoid amenability and nuclearity of groupoid {$\mathrm{C}^\ast$}-algebras},
   journal={J. Funct. Anal.},
   volume={290},
   date={2026},
   number={11},
   pages={Paper No. 111427, 54},
   issn={0022-1236},
%   review={\MR{5040091}},
%   doi={10.1016/j.jfa.2026.111427},
}

\bib{CEPSS}{article}{
 author = {Clark, Lisa Orloff},
 author = {Exel, Ruy},
 author = {Pardo, Enrique},
 author = {Sims, Aidan},
 author = {Starling, Charles},
 title = {Simplicity of algebras associated to non-{Hausdorff} groupoids},
 fjournal = {Transactions of the American Mathematical Society},
 journal = {Trans. Am. Math. Soc.},
 issn = {0002-9947},
 volume = {372},
 number = {5},
 pages = {3669--3712},
 year = {2019}
}

\bib{Coh13}{book}{
      author={Cohn, Donald~L.},
       title={Measure theory},
     edition={2nd revised ed.},
      series={Birkh{\"a}user Adv. Texts, Basler Lehrb{\"u}ch.},
   publisher={New York, NY: Birkh{\"a}user/Springer},
        date={2013},
        ISBN={978-1-4614-6955-1; 978-1-4614-6956-8},
}



\bib{Ex08}{article}{
      author={Exel, Ruy},
       title={Inverse semigroups and combinatorial {{\(\mathrm{C}^\ast\)}}-algebras},
        date={2008},
        ISSN={1678-7544},
     journal={Bull. Braz. Math. Soc. (N.S.)},
      volume={39},
      number={2},
       pages={191\ndash 313},
}

\bib{EP16}{article}{
 author = {Exel, Ruy},
 author = {Pardo, Enrique},
 title = {The tight groupoid of an inverse semigroup},
 fjournal = {Semigroup Forum},
 journal = {Semigroup Forum},
 issn = {0037-1912},
 volume = {92},
 number = {1},
 pages = {274--303},
 year = {2016}
}

\bib{ExelPitts22}{book}{
      author={Exel, Ruy},
      author={Pitts, David~R.},
       title={Characterizing groupoid {$\mathrm{C}^\ast$}-algebras of non-{H}ausdorff \'etale groupoids},
      series={Lecture Notes in Mathematics},
   publisher={Springer, Cham},
        date={2022},
      volume={2306},
        ISBN={978-3-031-05512-6; 978-3-031-05513-3},
         url={https://doi.org/10.1007/978-3-031-05513-3},
%      review={\MR{4510931}},
}

\bib{FellDoranI}{book}{
      author={Fell, J. M.~G.},
      author={Doran, R.~S.},
       title={Representations of {$^*$}-algebras, locally compact groups, and {B}anach {$^*$}-algebraic bundles. {V}ol. 1},
      series={Pure and Applied Mathematics},
   publisher={Academic Press, Inc., Boston, MA},
        date={1988},
      volume={125},
        ISBN={0-12-252721-6},
        note={Basic representation theory of groups and algebras},
%      review={\MR{936628}},
}


\bib{FollandRealAnalysis}{book}{
      author={Folland, Gerald~B.},
       title={Real analysis},
      series={Pure and Applied Mathematics (New York)},
   publisher={John Wiley \& Sons, Inc., New York},
        date={1984},
        ISBN={0-471-80958-6},
        note={Modern techniques and their applications, A Wiley-Interscience Publication},
%      review={\MR{767633}},
}


\bib{FollandAbstractHarmonicAnalysis}{book}{
      author={Folland, Gerald~B.},
       title={A course in abstract harmonic analysis},
     edition={Second},
      series={Textbooks in Mathematics},
   publisher={CRC Press, Boca Raton, FL},
        date={2016},
        ISBN={978-1-4987-2713-6},
%      review={\MR{3444405}},
}


\bib{GM26}{article}{
      author={Gabe, James},
      author={Miller, Alistair},
       title={Self-adjoint traces on the {Pedersen} ideal of {{\(\mathrm{C}^\ast\)}}-algebras},
        date={2026},
     journal={Publ. Mat.},
      volume={70},
      number={1},
       pages={277\ndash 301},
}

\bib{GGGKN24}{article}{
      author={Gardella, Eusebio},
      author={Geffen, Shirly},
      author={Gesing, Rafaela},
      author={Kopsacheilis, Grigoris},
      author={Naryshkin, Petr},
       title={Essential freeness, allostery and {$\mathcal Z$}-stability of crossed products},
        date={2024},
    journal={arXiv:2405.04343},
}

\bib{GLN23}{article}{
      author={Gong, Guihua},
      author={Lin, Huaxin},
      author={Niu, Zhuang},
       title={A review of the {Elliott} program of classification of simple amenable $\mathrm{C}^\ast$-algebras},
        date={2023},
     journal={arXiv:2311.14238},
}


\bib{Hum25}{article}{
      author={Hume, Jeremy~B.},
       title={Characterizations of zero singular ideal in \'etale groupoid {{\(\mathrm{C}^\ast\)}}-algebras via compressible maps},
        date={2025},
     journal={arXiv:2509.07262},
}


\bib{KS22a}{article}{
 author = {Kalantar, Mehrdad},
 author = {Scarparo, Eduardo},
 title = {Boundary maps, germs and quasi-regular representations},
 fjournal = {Advances in Mathematics},
 journal = {Adv. Math.},
 issn = {0001-8708},
 volume = {394},
 pages = {31},
 note = {Id/No 108130},
 year = {2022}
}


\bib{KS22}{article}{
      author={Kalantar, Mehrdad},
      author={Scarparo, Eduardo},
       title={Boundary maps and covariant representations},
        date={2022},
        ISSN={0024-6093},
     journal={Bull. Lond. Math. Soc.},
      volume={54},
      number={5},
       pages={1944\ndash 1961},
}

\bib{KSS06}{article}{
      author={Kambites, Mark},
      author={Silva, Pedro~V.},
      author={Steinberg, Benjamin},
       title={The spectra of lamplighter groups and {Cayley} machines.},
        date={2006},
        ISSN={0046-5755},
     journal={Geom. Dedicata},
      volume={120},
       pages={193\ndash 227},
}

\bib{KTT}{article}{
      author={Kawamura, Shinz{\^o}},
      author={Takemoto, Hideo},
      author={Tomiyama, Jun},
       title={State extensions in transformation group {$\mathrm{C}^\ast$}-algebras},
        date={1990},
     journal={Acta Sci. Math. (Szeged)},
      volume={54},
      number={1-2},
       pages={191\ndash 200},
}

\bib{Ker20}{article}{
      author={Kerr, David},
       title={Dimension, comparison, and almost finiteness},
        date={2020},
        ISSN={1435-9855,1435-9863},
     journal={J. Eur. Math. Soc. (JEMS)},
      volume={22},
      number={11},
       pages={3697\ndash 3745},
         url={https://doi.org/10.4171/jems/995},
%      review={\MR{4167017}},
}




\bib{Kum86}{article}{
      author={Kumjian, Alexander},
       title={On {$\mathrm{C}^\ast$}-diagonals},
        date={1986},
        ISSN={0008-414X,1496-4279},
     journal={Canad. J. Math.},
      volume={38},
      number={4},
       pages={969\ndash 1008},
         url={https://doi.org/10.4153/CJM-1986-048-0},
%      review={\MR{854149}},
}

\bib{KM21}{article}{
      author={{Kwa\'sniewski}, Bartosz~Kosma},
      author={{Meyer}, Ralf},
       title={{Essential crossed products for inverse semigroup actions: simplicity and pure infiniteness}},
        date={2021},
        ISSN={1431-0635},
     journal={{Doc. Math.}},
      volume={26},
       pages={271\ndash 335},
}

\bib{KMP}{article}{
      author={{Kwa\'sniewski}, Bartosz~Kosma},
      author={{Meyer}, Ralf},
      author={{Prasad}, Akshara},
       title={Type semigroups for twisted groupoids and a dichotomy for groupoid {{\(\mathrm{C}^\ast\)}}-algebras},
        date={2025},
     journal={arXiv:2502.17190},
}


% \bib{LZ24}{article}{
%       author={Li, Kang},
%       author={Zhang, Jiawen},
%        title={Tracial states on groupoid {{\(\mathrm{C}^\ast\)}}-algebras and essential freeness},
%         date={2024},
%      journal={J. Noncommut. Geom.},
%         note={Published online first},
% }

\bib{LZ26}{article}{
   author={Li, Kang},
   author={Zhang, Jiawen},
   title={Tracial states on groupoid {$\mathrm{C}^\ast$}-algebras and essential freeness},
   journal={J. Noncommut. Geom.},
   volume={20},
   date={2026},
   number={2},
   pages={561--586},
   issn={1661-6952},
%   review={\MR{5047641}},
%   doi={10.4171/jncg/597},
}


\bib{Li20}{article}{
      author={Li, Xin},
       title={Every classifiable simple {$\mathrm{C}^\ast$}-algebra has a {C}artan subalgebra},
        date={2020},
        ISSN={0020-9910,1432-1297},
     journal={Invent. Math.},
      volume={219},
      number={2},
       pages={653\ndash 699},
         url={https://doi.org/10.1007/s00222-019-00914-0},
%      review={\MR{4054809}},
}


% \bib{MS25}{article}{
%       author={Mart\'inez, Diego},
%       author={Szak\'acs, N{\'o}ra},
%        title={Algebraic singular functions are not always dense in the ideal of {$\mathrm{C}^\ast$}-singular functions},
%         date={2025},
%      journal={arXiv:2510.01947},
% }

\bib{MS26}{article}{
   author={Mart\'inez, Diego},
   author={Szak\'acs, N\'ora},
   title={Algebraic singular functions are not always dense in the ideal of
   {$\mathrm{C}^\ast$}-singular functions},
   journal={Bull. Lond. Math. Soc.},
   volume={58},
   date={2026},
   number={5},
   pages={Paper No. e70381, 23},
   issn={0024-6093},
%   review={\MR{5077044}},
%   doi={10.1112/blms.70381},
}

\bib{Mat12}{article}{
      author={Matui, Hiroki},
       title={Homology and topological full groups of \'etale groupoids on totally disconnected spaces},
        date={2012},
     journal={Proc. Lond. Math. Soc. (3)},
      volume={104},
      number={1},
       pages={27\ndash 56},
}



\bib{MS24}{article}{
      author={Miller, Alistair},
      author={Steinberg, Benjamin},
       title={Homology and {K}-theory for self-similar actions of groups and groupoids},
        date={2024},
     journal={arXiv:2409.02359},
}

 \bib{MM03}{book}{
 author = {Moerdijk, Ieke},
 author = {Mr{\v{c}}un, J.},
 title = {Introduction to foliations and {Lie} groupoids},
 fseries = {Cambridge Studies in Advanced Mathematics},
 series = {Camb. Stud. Adv. Math.},
 volume = {91},
 isbn = {0-521-83197-0},
 year = {2003},
 publisher = {Cambridge: Cambridge University Press}
 }

\bib{M90}{book}{
      author={Murphy, Gerard~J.},
       title={$\mathrm{C}^\ast$-algebras and operator theory},
   publisher={Boston, MA etc.: Academic Press, Inc.},
        date={1990},
        ISBN={0-12-511360-9},
}

\bib{NekraSSGbook}{book}{
      author={Nekrashevych, Volodymyr},
       title={Self-similar groups},
      series={Mathematical Surveys and Monographs},
   publisher={American Mathematical Society, Providence, RI},
        date={2005},
      volume={117},
        ISBN={0-8218-3831-8},
         url={https://doi.org/10.1090/surv/117},
%      review={\MR{2162164}},
}

\bib{Nek09}{article}{
      author={Nekrashevych, Volodymyr},
       title={{$\mathrm{C}^*$}-algebras and self-similar groups},
        date={2009},
     journal={J. Reine Angew. Math.},
      volume={630},
       pages={59\ndash 123},
}

\bib{NS22}{article}{
      author={Neshveyev, Sergey},
      author={Stammeier, Nicolai},
       title={The groupoid approach to equilibrium states on right {LCM} semigroup {{\(\mathrm{C}^\ast\)}}-algebras},
        date={2022},
        ISSN={0024-6107},
     journal={J. Lond. Math. Soc., II. Ser.},
      volume={105},
      number={1},
       pages={220\ndash 250},
}


\bib{NO19}{article}{
      author={{Nyland}, Petter},
      author={{Ortega}, Eduard},
       title={{Topological full groups of ample groupoids with applications to graph algebras}},
        date={2019},
        ISSN={0129-167X},
     journal={{Int. J. Math.}},
      volume={30},
      number={4},
       pages={66},
        note={Id/No 1950018},
}


\bib{Ped18}{book}{
      author={Pedersen, Gert~K.},
       title={{$\mathrm{C}^\ast$}-algebras and their automorphism groups},
     edition={Second},
      series={Pure and Applied Mathematics (Amsterdam)},
   publisher={Academic Press, London},
        date={2018},
        note={Edited and with a preface by S\o ren Eilers and Dorte Olesen},
}

\bib{Ren08}{article}{
      author={Renault, Jean},
       title={Cartan subalgebras in {$\mathrm{C}^\ast$}-algebras},
        date={2008},
        ISSN={0791-5578},
     journal={Irish Math. Soc. Bull.},
      number={61},
       pages={29\ndash 63},
%      review={\MR{2460017}},
}

\bib{SSW20}{book}{
      author={Sims, Aidan},
      author={Szab\'o, G\'abor},
      author={Williams, Dana},
       title={Operator algebras and dynamics: groupoids, crossed products, and {R}okhlin dimension},
      series={Advanced Courses in Mathematics. CRM Barcelona},
   publisher={Birkh\"auser/Springer, Cham},
        date={2020},
}

\bib{Tay25}{article}{
      author={Taylor, Jonathan},
       title={Essential {C}artan subalgebras of {$\mathrm{C}^\ast$}-algebras},
        date={2025},
     journal={Math. Scand.},
      volume={131},
      number={3},
       pages={559\ndash 604},
}

\bib{Urs21}{article}{
      author={Ursu, Dan},
       title={Characterizing traces on crossed products of noncommutative {{\(\mathrm{C}^\ast\)}}-algebras},
        date={2021},
        ISSN={0001-8708},
     journal={Adv. Math.},
      volume={391},
       pages={29},
        note={Id/No 107955},
}

\bib{Win18}{inproceedings}{
      author={Winter, Wilhelm},
       title={Structure of nuclear {$\mathrm{C}^\ast$}-algebras: from quasidiagonality to classification and back again},
        date={2018},
   booktitle={Proceedings of the {I}nternational {C}ongress of {M}athematicians---{R}io de {J}aneiro 2018. {V}ol. {III}. {I}nvited lectures},
   publisher={World Sci. Publ., Hackensack, NJ},
       pages={1801\ndash 1823},
%      review={\MR{3966830}},
}

\bib{Y19}{article}{
      author={Yoshida, Keisuke},
       title={A von {Neumann} algebraic approach to self-similar group actions},
        date={2019},
        ISSN={0129-167X},
     journal={Int. J. Math.},
      volume={30},
      number={14},
       pages={19},
        note={Id/No 1950074},
}

\end{biblist}
\end{bibdiv}
% \bibliographystyle{alpha}

% \bib, bibdiv, biblist are defined by the amsrefs package.

\end{document}